\newtheorem{theorem}{Theorem}[section]
\newtheorem*{claim*}{Claim}
\newtheorem*{theorem*}{Theorem}
\newtheorem*{definition*}{Definition}
\newtheorem{corollary}[theorem]{Corollary}
\newtheorem{remark}[theorem]{Remark}
\newtheorem{claim}[theorem]{Claim}
\newtheorem{fact}[theorem]{Fact}
\newtheorem{observation}[theorem]{Observation}
\newtheorem{definition}[theorem]{Definition}
\newcommand{\comm}[1]{}
\crefname{theorem}{Theorem}{Theorems}
\crefname{proposition}{Proposition}{Propositions}
\crefname{observation}{Observation}{Observations}
\crefname{lemma}{Lemma}{Lemmas}
\crefname{claim}{Claim}{Claims}
\crefname{problem}{Problem}{Problems}
\crefname{conjecture}{Conjecture}{Conjectures}
\crefname{question}{Question}{Questions}
\crefname{example}{Example}{Examples}
\crefname{fact}{Fact}{Facts}
\newcommand{\dom}{\operatorname{dom}}
\newcommand{\local}{\mathsf{LOCAL}}
\newcommand{\rlocal}{\mathsf{R-LOCAL}}
\newcommand{\CONT}{\mathsf{CONTINOUS}}
\newcommand{\unifCONT}{\textrm{uniformly}\;\mathsf{CONTINOUS}}
\newcommand{\BOREL}{\mathsf{BOREL}}
\newcommand{\BAIRE}{\mathsf{BAIRE}}
\newcommand{\MEASURE}{\mathsf{MEASURE}}
\newcommand{\FIID}{\mathsf{fiid}}
\newcommand{\fiid}{\mathsf{fiid}}
\newcommand{\borel}{\mathsf{BOREL}}
\newcommand{\baire}{\mathsf{BAIRE}}
\newcommand{\measure}{\mathsf{MEASURE}}
\newcommand{\type}{\texttt{type}}
\newcommand{\tail}{\mathsf{TAIL}}
\renewcommand{\P}{\textrm{P}}
\newcommand{\E}{\textrm{\textbf{E}}}
\newcommand{\poly}{\operatorname{poly}}
\newcommand{\pspace}{\mathsf{PSPACE}}
\newcommand{\fA}{\mathcal{A}}
\newcommand{\fB}{\mathcal{B}}
\newcommand{\fC}{\mathcal{C}}
\newcommand{\fE}{\mathcal{E}}
\newcommand{\Epsilon}{\fE}
\newcommand{\fI}{\mathcal{I}}
\newcommand{\fK}{\mathcal{K}}
\newcommand{\fL}{\mathcal{L}}
\newcommand{\fM}{\mathcal{M}}
\newcommand{\fP}{\mathcal{P}}
\newcommand{\fS}{\mathcal{S}}
\newcommand{\fT}{\mathcal{T}}
\newcommand{\fX}{\mathcal{X}}
\newcommand{\fY}{\mathcal{Y}}
\newcommand{\fZ}{\mathcal{Z}}
\newcommand{\tA}{\mathtt{A}}
\newcommand{\tB}{\mathtt{B}}
\newcommand{\tC}{\mathtt{C}}
\newcommand{\tG}{\mathtt{G}}
\newcommand{\tR}{\mathtt{R}}
\newcommand{\ta}{\mathtt{a}}
\newcommand{\tb}{\mathtt{b}}
\newcommand{\tc}{\mathtt{c}}
\newcommand{\td}{\mathtt{d}}
\newcommand{\te}{\mathtt{e}}
\newcommand{\markkth}{$\textsf{mark}$-$k\textsf{th}$-$\textsf{element}$\;}
\newcommand{\blockcol}{$\mathsf{3\text{-}coloring\text{-}of\text{-}blocks}$\;}
\def\leukfrac#1/#2{\leavevmode
               \kern.1em
                \raise.9ex\hbox{\the\scriptfont0 ${}_#1$}
                \hskip -1pt\kern-.1em
                /\kern-.15em\lower.10ex\hbox{\the\scriptfont0 ${}_#2$}}
\def\diam{\mathop{\operator@font diam}\nolimits}
\title{Classification of Local Problems on Paths from the Perspective of Descriptive Combinatorics }
\begin{document}

\author{Jan Grebík \\ \small{University of Warwick} \\ \small{jan.grebik@warwick.ac.uk} \and Václav Rozhoň\\ \small{ETH Zurich} \\ \small{rozhonv@ethz.ch}
}

\maketitle

\begin{abstract}
    We classify which local problems with inputs on oriented paths have so-called Borel solution and show that this class of problems remains the same if we instead require a measurable solution, a factor of iid solution, or a solution with the property of Baire. 
    
    Together with the work from the field of distributed computing [Balliu et al. PODC 2019], the work from the field of descriptive combinatorics [Gao et al. arXiv:1803.03872, Bernshteyn arXiv:2004.04905] and the work from the field of random processes [Holroyd et al. Annals of Prob. 2017, Grebík, Rozhoň 	arXiv:2103.08394], this finishes the classification of local problems with inputs on oriented paths using complexity classes from these three fields. 
    
    A simple picture emerges: there are four classes of local problems and most classes have natural definitions in all three fields. 
    Moreover, we now know that randomness does \emph{not} help with solving local problems on oriented paths. 
    
    %Our approach combines techniques from all three fields. \todo{?H: to uz je receny v prvnim odstavci bych rekl V: ok}
\end{abstract}

\section{Introduction}\label{sec:intro}

\emph{Locally checkable problems (LCLs)} is a class of graph problems where the correctness of a solution can be checked locally. This class includes problems like vertex or edge coloring, perfect matching or a maximal independent set, as well as problems with inputs such as list colorings etc.
In this paper, we study LCLs on oriented paths from three different perspectives: the perspective of \emph{descriptive combinatorics}, \emph{distributed algorithms},  and \emph{random processes}.
Each perspective offers different procedures and computational power to solve LCLs, as well as different scales to measure complexity of LCLs.
In particular, the notion of easy/local vs hard/global problems varies in different settings.

In this work we completely describe the connections between these complexity classes and show that the problem of deciding to what class a given LCL belongs is decidable.
This extends the known classification of LCLs without inputs on oriented paths (see \cref{sec:no_inputs}) and is in contrast with the fact that on $\mathbb{Z}^d$, where $d>1$, this problem is not decidable.

Moreover, we hope that our work helps to clarify the relationships between the three different perspectives and, since the situation is both non-trivial and well understood, may serve as a basis of a common theory.
We note that for different classes of graphs, i.e., grids, regular trees, etc, describing possible complexity classes of LCLs and the relationship between them is not known, is already much harder and contains many exciting questions, see~\cite{grebik_rozhon2021toasts_and_tails,brandt_chang_grebik_grunau_rozhon_vidnyaszky2021LCLs_on_trees_descriptive}.

We will now briefly explain all three fields and then the particular setup of LCLs on paths that we are interested in. 
Then we state \cref{thm:main}, our main theorem, and \cref{cor:main}, the corrolary of \cref{thm:main} and previous work which is a classification of complexity classes of LCL problems on oriented paths that relates the three different perspectives to each other.
To make the presentation more precise, we introduce the formal definition of LCLs.

\begin{definition}[LCLs on oriented paths]
\label{def:lcls_on_paths}
A locally checkable problem (LCL) on an oriented path is a quadruple $\Pi=(\Sigma_{in}, \Sigma_{out}, r, \fP)$, where $\Sigma_{in}$ and $\Sigma_{out}$ are finite sets, $r$ is a positive integer, and $\fP$ is a function from the space of finite rooted $\Sigma_{in}$-$\Sigma_{out}$-labeled paths of diameter (at most) $r$ to the set $\{\texttt{true}, \texttt{false}\}$.

A \emph{correct solution} of an LCL problem $\Pi$ in a $\Sigma_{in}$-labeled oriented path $G$ (finite cycle or infinite path) is a map $f:V(G)\to \Sigma_{out}$ such that $\fP$ returns $\texttt{true}$ for $\Sigma_{in}$-$\Sigma_{out}$-labeled rooted $r$-neighborhood of every node $v\in V(G)$. 
Every such $f$ is called a \emph{$\Pi$-coloring}.
\end{definition}

Example of an LCL problem is $k$-coloring: there, $\Sigma_{in} = \emptyset$, $\Sigma_{out} = \{1, 2, \dots, k\}$, $r=1$, and $\fP$ returns true for all pairs $(\sigma_1, \sigma_2) \in \Sigma_{out}^2$ such that $\sigma_1 \not= \sigma_2$. 
Other examples of problems without inputs are: maximal independent set, edge coloring, or perfect matching. Problems with inputs include for example list coloring (we formally require the set of all colors in the lists to be finite). 

%In this work, we want to understand exactly these problems. 
The fact that we work with the input labelling is important. Without it, the whole classification problem is substantially simpler (\cref{sec:no_inputs}). The reason we care about inputs is twofold. First, problems with inputs contain more general setups such as the setup of the famous circle squaring problem that we discuss later. Second, understanding problems on lines with inputs serves as an intermediate step towards understanding problems on trees, at least in the distributed computing area \cite{balliu2019LCLs_on_paths_decidable,chang2017time_hierarchy}. 

\paragraph{Distributed Algorithms}

The study of the $\local$ model of distributed algorithm \cite{linial1987LOCAL} is motivated by understanding distributed algorithms in huge networks. 
As an example, consider all wifi routers on the planet. Think of two routers being connected if they are close enough to exchange messages. In this case, they should better communicate with user devices on different channels, so as to avoid interference. 
In the language of graph theory, we want to color the network of routers, in a distributed fashion. 
Even if the maximum degree of the network is $\Delta$ and we want to color it with $\Delta+1$ colors, the problem remains nontrivial, because the decision of each vertex (wifi router) should be done after few communication rounds with its neighbors. 

The $\local$ model formalizes this setup: there is a (huge) graph such that each of its nodes at the beginning knows only its size $n$, and perhaps some other parameter like the maximum degree $\Delta$. 
In case of randomized algorithms, each node has an access to a random string, while in case of deterministic algorithms, each node starts with a unique identifier from a range polynomial in the size of the graph $n$. 
In one round, each node can exchange any message with its neighbors and can perform an arbitrary computation. We want to find a solution to a problem in as few communication rounds as possible. 

Importantly, there is an equivalent view of $t$-round $\local$ algorithms: such an algorithm is simply a function that maps all possible $t$-hop neighbourhoods to the final output. An algorithm is correct if and only if applying this function to the $t$-hop neighborhood of each node solves the problem. 

The simplest possible setting to consider in the $\local$ model is if one restricts their attention to the simplest graph: a sufficiently long consistently oriented cycle. This graph is the simplest model, since all local neighborhoods of it look like an oriented path. 
This case is very well understood: It is known that any LCL problem can have only one of three complexities on oriented paths and the randomized complexity is always the same as the deterministic one. 
First, there are problems solvable in $O(1)$ local computation rounds -- think of the problems like ``color all nodes with red color'' or ``how many different input colors are there in my 5-hop neighborhood?''. 
%Note that the second problem operates with some additional input on each node but this is allowed in the definition of an LCL. 
Second, there is a class of basic symmetry breaking problems solvable in $\Theta(\log^* n)$ rounds -- this includes problems like $3$-coloring, list-coloring with lists of size at least $3$, or maximal independent set. 
Finally, there is a class of ``global'' problems that cannot be solved in $o(n)$ rounds -- these include e.g. $2$-coloring or perfect matching. 

\paragraph{Descriptive Combinatorics}

%It was very surprising when 
In 1990s Laczkovich resolved the famous Circle Squaring Problem of Tarski \cite{laczkovich}: A square of unit area can be decomposed into finitely many pieces that can be translated to form a disk of unit area.
This result was improved in recent years to make the pieces measurable in various sense \cite{OlegCircle,Circle,JordanCircle}.
This theorem and its subsequent strengthenings are the highlights of a field nowadays called descriptive combinatorics \cite{KST,kechris_marks2016descriptive_comb_survey,pikhurko2021descriptive_comb_survey} that has close connections to distributed computing as was shown in an insightful paper by Bernshteyn \cite{Bernshteyn2021LLL}. 

The simplest non-trivial setup of descriptive combinatorics is the following \cite{pikhurko2021descriptive_comb_survey}. Consider the unit cycle $\mathbb{S}^1$ in $\mathbb{R}^2$, i.e., the set of pairs $(x,y)$ such that $x^2+y^2=1$. Imagine rotating this cycle by a rotation $\alpha > 0$ that is irrational with respect to the full rotation, i.e., $\alpha / (2\pi) \not\in \mathbb{Q}$.
%is imagine the  which is an irrational multiple of the full rotation, that is, $\alpha / (2\pi) \not\in \mathbb{Q}$. 
This rotation naturally induces a directed graph $G_\alpha = (\mathbb{S}^1,E)$ on the cycle $\mathbb{S}^1$ where a directed edge in $E$ points from $(x,y)$ to $(x',y')$ if one gets $(x',y')$ by rotating $(x,y)$ by $\alpha$. 
This graph has uncountably many connected components, each of which is a directed path going to infinity in both directions. 

We could now ask questions like: What is the chromatic number of $G$?
If there were no other restrictions, the answer is $2$, i.e., color each path separately.
However, for this type of argument we implicitly use the axiom of choice.
The main goal of descriptive combinatorics is to understand what happens if some additional definability/regularity requirements are posed on the colorings.
That is, what if we require each color class to be an interval, open, Borel, or Lebesgue measurable set?
With any of those requirements, the chromatic number of $G$ now increases to $3$.

The reason is as follows. 
On one hand, one can color $G$ with three colors, if one starts by coloring a half-open interval $I$ of length $\alpha$ on $\mathbb{S}^1$ with color $1$. It can be seen that the rest of the cycle can be colored with colors $2$ and $3$ by shifting the interval by multiples of $\alpha$ (see \cref{fig:cycle}). 
On the other hand, a two coloring is impossible.
This is easily seen when we require our color classes to be unions of intervals.
Imagine such a two-coloring of $\mathbb{S}^1$ and consider an arbitrary node $x \in \mathbb{S}^1$ colored with, say, red color. Then, all the nodes that we get by rotating $x$ by a multiple of $2\alpha$ are red, too. But since $\alpha$, and therefore $2\alpha$, is irrational rotation we see that the set of red points is dense in $\mathbb{S}^1$ (see \cref{fig:cycle}).
Hence, it intersects any other interval, in particular the color class blue.
Similar argument rules out any potential Lebesgue measurable $2$-coloring, where the argument that red points are dense is replaced by the fact that irrational rotations are ergodic.

\begin{figure}
    \centering
    \includegraphics[width = .9\textwidth]{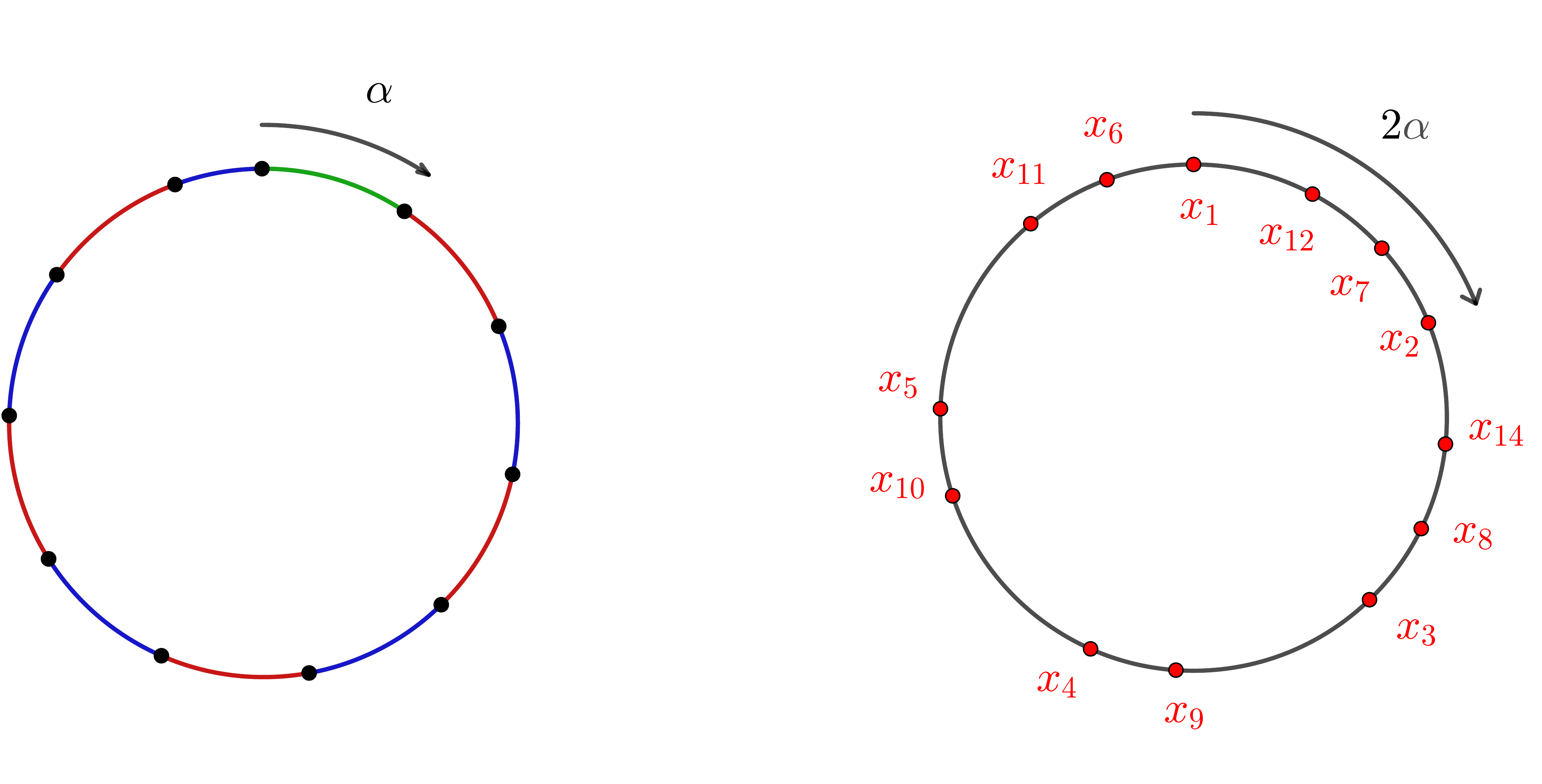}
    \caption{Left: 3-coloring by intervals is constructed by
    first coloring a half-open interval of length $\alpha$ with green and then rotating this interval and using alternatively a red and blue color. \\
    Right: if we start rotating a point $x_1$ by multiples of $2\alpha$, we get a dense set that intersects any (supposedly blue) interval. }
    \label{fig:cycle}
\end{figure}

For a general LCL without input labelings the situation is completely understood, see \cref{sec:no_inputs}.
In this case, problems solvable on $G$ (with the additional definability requirements) are exactly those that have local complexity $O(\log^* n)$.
Moreover, as in the case of $3$-coloring, if a problem is solvable, then one can find a solution using only unions of intervals.

Our main theorem generalizes to the setup with input labels.
In the example with $\mathbb{S}^1$ this means that an adversary first partitions $\mathbb{S}^1$ into sets that are indexed by input labels, i.e., $\mathbb{S}^1=\bigsqcup_{\sigma\in \Sigma_{in}}A_\sigma$.
Again, we require each input color class to satisfy some definability properties, e.g., union of intervals, open, Borel, or Lebesgue measurable set.
We may view any such partition as labeling of nodes simply by thinking that a node $x\in \mathbb{S}^1$ is labeled by $\sigma$ if $x\in A_\sigma$.
This induces a labeling of each oriented doubly infinite path of the graph $G$.
Now we ask for a definable coloring with colors from $\Sigma_{out}$ that would solve $\Pi$. 
As we demonstrate more LCL problems can be solved in this setup than with local algorithms.
However, the picture is still very clean and in our view nicely explains the power of descriptive combinatorics when compared with distributed algorithms.

The example with the circle $\mathbb{S}^1$ demonstrates in an instructive way what one should expect from definable constructions.
However, after one becomes familiar with this example and wants to understand Borel contructions in general, it is better to use a different view.
It is known that local algorithm with round complexity $O(\log^* n)$ can, in fact, use only $O(1)$ rounds if we first solve an MIS on a sufficiently large power of the input graph \cite{chang2017time_hierarchy} (cf. \cref{sec:no_inputs}).
Finding MIS for any power of the input graph is possible in the Borel context \cite{KST}.
This allows to think of a ``Borel algorithm'' as follows: the algorithm can use $O(1)$ local constructions and has an oracle access to MIS on graphs it can construct, but the algorithm can ``run'' for any countable number of steps. 
This is analogous to how Borel $\sigma$-algebra is defined: one starts with a set system of basic open sets and then creates more complicated sets by doing basic set operations for any countable number of steps. 
This already hints that such a model is much stronger then the $\local$ model: in the Borel setting each node has access to some global information on its connected component, e.g., how many input labels appear etc., and then use this knowledge to solve a given problem.

\paragraph{(Finitary) Factors of iid Processes}

To show that the proper vertex $2$-coloring on the graph $G_\alpha$ induced by an irrational rotation of the circle $\mathbb{S}_1$ does not exist with Lebesgue measurable sets, one uses the means of measure theory, in particular, of ergodic theory.
In its most abstract form, the goal of ergodic theory is to classify measure preserving transformations of probability measure spaces, e.g., by introducing invariants such as entropy or mixing properties.
A prominent collection of examples, that also plays important role in this paper and in the area of random processes, are the Bernoulli shifts, also called \emph{iid processes}, and their factors, \emph{fiid}.
In the area of random processes two natural questions are studied: (a) what are the possible factors of a iid processes \cite{HolroydSchrammWilson2017FinitaryColoring,grebik_rozhon2021toasts_and_tails} and (b) can a given process be described as a factor of a given iid process \cite{Spinka}.

An iid process is, for example, given by independent uniform labelings of vertices of $\mathbb{Z}$ by $2^{\mathbb{N}}$, i.e., infinite strings of $0$s and $1$s, together with the natural shift action given by the group structure of $\mathbb{Z}$.
Getting back to the proper vertex $2$-coloring problem, imagine that you want to find it as a factor of this iid process in the following way.
Start from the iid process and let each vertex $v\in \mathbb{Z}$ explore its neighborhood.
Each node is required to finish after finitely many steps and output one of the two allowed colors only depending on the $2^\mathbb{N}$-labels in this neighborhood in such a way that the global coloring is a proper vertex $2$-coloring almost surely.
Such a processes is called \emph{finitary factor of iid processes (ffiid)}.
Processes, when we do not require the exploring to finish after finitely many steps but allow each vertex to see the whole line, are exactly factors of iid processes (fiid).
It is a basic result in the area of random processes that every fiid needs to satisfy some correlation decay between decisions of far away points.
Note that the proper vertex $2$-coloring does not show such a behavior and therefore it cannot be described as fiid, nor ffiid.

The connections between LCL problems that admit ffiid solution and $\local$ algorithms was studied in \cite{HolroydSchrammWilson2017FinitaryColoring,grebik_rozhon2021toasts_and_tails} for $\mathbb{Z}^d$, $d>0$.
The class of such LCL problems is called $\tail$ while the class of problems that admit fiid solution is called $\FIID$.
The connection between these classes, when we restrict our attention to LCL problems without inputs, is not clear unless $d=1$.
In that case they are equal.
In the situation with inputs we have $\tail\not=\fiid$, see \cref{fig:big_picture_paths}.
It is an interesting open problem whether $\fiid=\borel$ on $\mathbb{Z}^d$ when $d>1$. See \cref{sec:Open_Problems_lines}.

\subsection{Our Contribution}

We extend the classification of LCLs on oriented paths from the perspective of distributed algorithms \cite{balliu2019LCLs_on_paths_decidable} to the perspective of descriptive combinatorics and random processes.
A simple picture emerges.
We find that with inputs the latter setups offer more complexity classes than distributed algorithms.
%those classes already contain more problems than classes from distributed and ffiid.

We now state our main theorem and its corollary which is a classification of LCL problems from three different perspectives. 
%Importantly, the classes $\local(f(n))$ contain LCLs that can be solved with a distributed algorithm with $O(f(n))$ local complexity. 
One should think of the class $\borel$ as in the example where the adversary partitions the circle that is described above and one is allowed to use a ``Borel algorithm'' to find a solution.
The classes $\measure$, $\baire$ are traditionally studied in descriptive combinatorics and offer more computational power. Intuitively, the additional power of $\measure$ when compared with $\borel$ is the same as the additional power of a randomized algorithm when compared with a deterministic one. The complexity classes below are formally defined in \cref{subsec:def_local}.

%In the following theorem, the normal form of an LCL problem is its form where the solution can be checked with locality $r = 1$ (\cref{def:normal_form_line}). 
%{See \cite{balliu2019LCLs_on_paths_decidable,brandt_grids,GaoJackson,Bernshteyn2021local=cont,HolroydSchrammWilson2017FinitaryColoring,grebik_rozhon2021toasts_and_tails}}
\begin{theorem}\label{thm:main}
For the classes of LCL problems on infinite oriented lines we have that
\[
\borel = \measure = \fiid = \baire. 
\]
Moreover, deciding whether $\Pi \in \borel$ is a $\pspace$-complete problem.  
\end{theorem}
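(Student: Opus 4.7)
The plan is to pin down a single combinatorial condition $(\star)$ on $\Pi$ that simultaneously characterizes membership in $\borel$, $\measure$, $\baire$, and $\fiid$, and then to read $\pspace$-completeness off of this condition. The natural candidate for $(\star)$ is a reachability/flexibility property of the finite automaton $H_\Pi$ whose states are the $\fP$-valid $r$-neighborhood patterns in $(\Sigma_{in}\times\Sigma_{out})^{2r+1}$ and whose edges encode consistent consecutive shifts. A $\Pi$-coloring of a $\Sigma_{in}$-labeled line is exactly a bi-infinite directed walk in $H_\Pi$ compatible with the input, so $(\star)$ would ask, roughly, that from every state reachable under some input one can always return to a ``flexible core'' of mutually reachable cycles, regardless of the future input.

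First I would dispatch the easy inclusions $\borel \subseteq \measure \cap \baire \cap \fiid$. The inclusions into $\measure$ and $\baire$ are immediate because Borel sets are Lebesgue measurable and have the Baire property. For $\borel \subseteq \fiid$ on $\mathbb{Z}$-actions I would use the $\toast$ machinery developed in \cite{grebik_rozhon2021toasts_and_tails}: the iid labels on $\mathbb{Z}$ produce, almost surely, a refining sequence of finite intervals (a toast), and any Borel construction on the shift space can be executed with respect to this toast, with a finite-range decision at a.e.\ point. The nontrivial direction is to show that failure of $(\star)$ precludes membership in $\measure$, $\baire$, and $\fiid$. Here the obstruction is a family of input patterns on which the only valid outputs are ``rigid,'' forcing long-range correlations between far-apart output choices. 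Against $\fiid$ this contradicts correlation decay; against $\measure$ it contradicts ergodicity of an irrational-rotation Borel graph on $\mathbb{S}^1$, exactly as in the circle example of the introduction; against $\baire$ it uses the analogous Baire-category argument on a comeager set of inputs. Conversely, if $(\star)$ holds I would construct a Borel solution by taking a Borel maximal independent set on a sufficiently high power of the line graph as an anchor set of ``decision points,'' and then interpolating between anchors by navigating the flexible core of $H_\Pi$; Borel MIS exists on such Borel graphs by classical results of \cite{KST}.

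For the complexity statement, $\pspace$ membership is immediate from the characterization: $(\star)$ is a reachability property in $H_\Pi$, whose state space has size at most $(|\Sigma_{in}||\Sigma_{out}|)^{2r+1}$, i.e., exponential in the description of $\Pi$, and can be enumerated on the fly, so standard polynomial-space graph search decides $(\star)$. The main obstacle, and the most involved step, is $\pspace$-hardness; my plan is to adapt the reduction of Balliu et al.\ \cite{balliu2019LCLs_on_paths_decidable} that establishes hardness for the analogous distributed classification. Since $(\star)$ is itself a reachability-type condition in an exponentially large implicitly-represented automaton, a polynomial-space Turing machine $M$ can be encoded as an LCL whose valid neighborhoods mimic transitions of $M$, with input labels carrying the input of $M$, so that $\Pi \in \borel$ iff $M$ accepts. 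The combinatorial gadgets of Balliu et al.\ should be repurposable, with modifications reflecting that the new $\borel$ threshold is strictly weaker than their $\Theta(\log^* n)$ threshold in the presence of inputs; getting this last encoding precisely right is the hard engineering part.
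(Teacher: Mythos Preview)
Your high-level architecture---isolate one combinatorial condition, prove it equivalent to each of the four classes, then read off $\pspace$-completeness---matches the paper. But the condition you sketch is the wrong one, and this is a genuine gap rather than a missing detail.

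You describe $(\star)$ as a reachability/flexibility property: from every reachable state one can return to a flexible core of cycles. That kind of condition characterizes $\local(O(\log^*n))=\CONT$, not $\borel$; with inputs the two classes differ (e.g.\ \blockcol{} is in $\borel\setminus\local(O(\log^*n))$). More decisively, consider a permutation problem (\cref{def:PermProblem}): edges carry permutations of $\Sigma_{out}$ and outputs must transform accordingly. The automaton $H_\Pi$ is as connected and ``flexible'' as one likes---every state has out-degree $|\Sigma_{out}|$ times the number of permutations---yet the problem is outside all four classes precisely when the permutations have no common fixed point. The obstruction is algebraic, not graph-theoretic. The paper's condition (non-\emph{mixing}) is exactly this: for every $\sigma\in\Sigma_{in}$ and every subpartition $(\Upsilon,\nabla)$ of $\Sigma_{out}$, the group $\Gamma_{\sigma,(\Upsilon,\nabla)}$ of permutations of $\nabla$-classes induced by $\sigma$-blocks must have a common fixed point.

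This difference propagates to both directions of the proof. Your upper bound ``MIS anchors plus interpolation through the flexible core'' is the $O(\log^* n)$ construction; it does not solve \blockcol. The paper instead analyzes, in a Borel way, the global reachability equivalence $\Re$ on the input graph $G_\fI$, extracts a finite poset of ``intertwined paths,'' and uses the fixed-point hypothesis to \emph{select one such path} consistently along the whole line; only after that selection does an MIS-style interpolation finish the job. For the lower bound, ergodicity of irrational rotations handles only the no-input case. With inputs the paper builds a shift-invariant measure (and a Polish topology) on a space of inputs assembled from permutation blocks, and proves a one-sided tail-measurability result (Doob martingale convergence for $\measure$, continuity on a comeager set for $\baire$) that, combined with the absence of a fixed point, forces a contradiction.

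Your $\pspace$ plan is closer to correct: membership does come from enumerating the finitely many $(\sigma,\Upsilon,\nabla)$ and checking the fixed-point condition, with the only subtlety that witnessing permutation blocks may be exponentially long (handled by Savitch). Hardness is indeed an adaptation of \cite{balliu2019LCLs_on_paths_decidable}, but the gadget must be modified: their ``secret bit'' trick yields a $\Theta(n)$ lower bound that does not separate $\borel$ from its complement, so one replaces it by a permutation-problem gadget between blocks to force a genuinely non-Borel instance when the encoded machine loops.
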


%We comment that \cref{thm:main} as stated is a compilation of our result and work of Balliu et al \cite{balliu2019LCLs_on_paths_decidable}.
%Our contribution is the characterization of the class (C) and providing/summing up the connections through out different fields, these were often not explicitly stated anywhere or were considered a folklore. \todo{tady musime byt opatrni, citovat antona a tak}

As a corollary, this finishes the classification of complexity classes coming from the three analysed areas. 
\cref{cor:main} follows from the work in distributed algorithms 
\cite{chang2016exp_separation,chang2017time_hierarchy,balliu2019LCLs_on_paths_decidable,brandt_grids}, descriptive combinatorics \cite{GJKS,Bernshteyn2021LLL,Bernshteyn2021local=cont}, and finitary factors of iid processes \cite{HolroydSchrammWilson2017FinitaryColoring,grebik_rozhon2021toasts_and_tails,brandt_grebik_grunau_rozhon2021classification_of_lcls_trees_and_grids}.
This classification is complete in the sense that we are not aware of other used classes of problems. It describes a unified picture of locality for one particular class of graphs and it helps to clarify the computational powers and limits of different approaches. 
Most importantly, on high-level the distinction between classes (B) and (C) in \cref{cor:main} is that in (C) we have access to some global information about the input while allowed steps in (B) are purely local (see \cref{subsec:intuition_about_main}).  

\begin{figure}
    \centering
    \includegraphics[width=.97\textwidth]{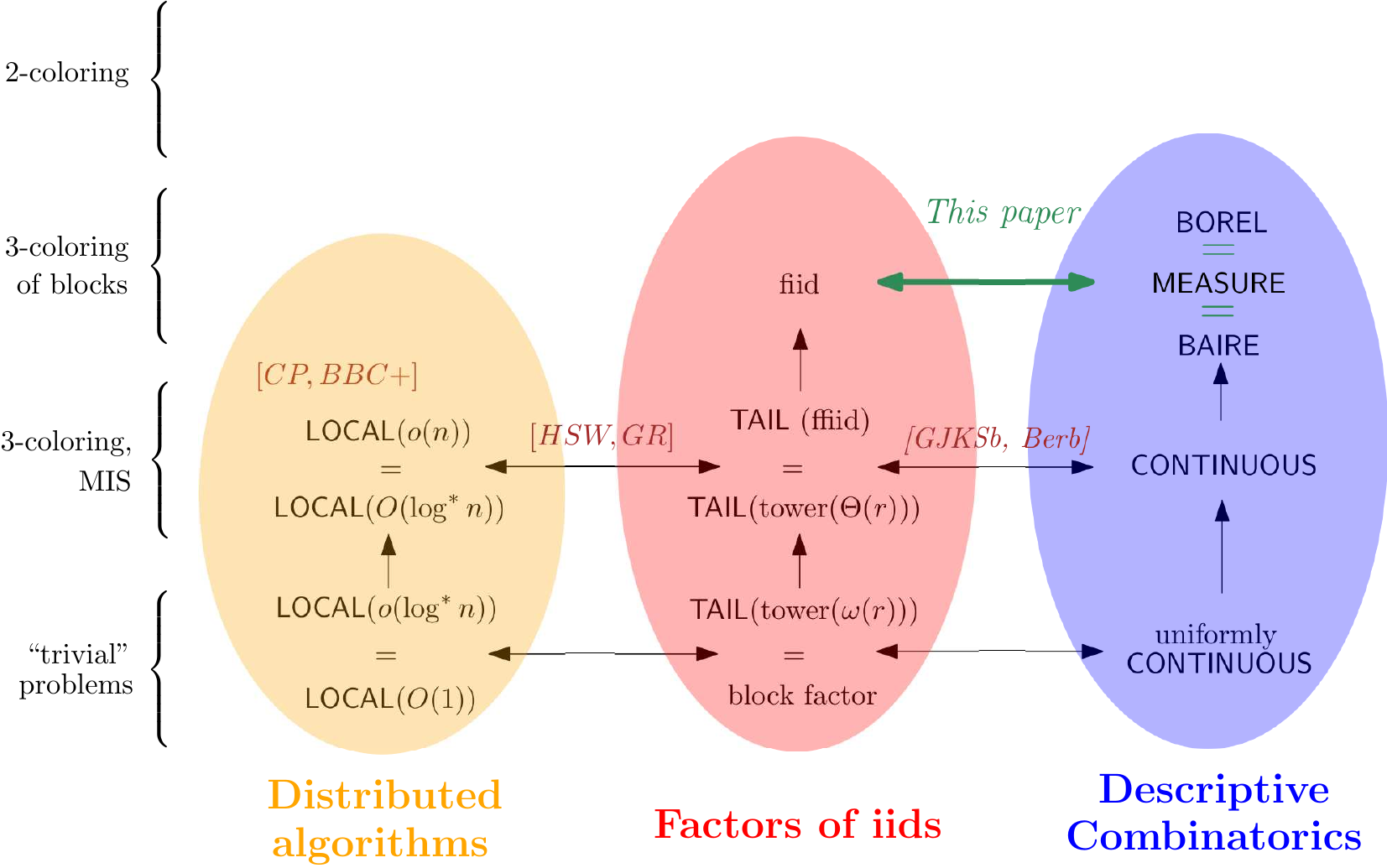}
        \caption{The figure shows a classification of LCL problems from \cref{cor:main} from all three considered perspectives. A clear picture emerges. There are four classes of problems. First, there is class that contains trivial problems such as ``how many different input colors are there in my 5-hop neighborhood''. Then, there is a class of basic symmetry breaking problems such as $3$-coloring or MIS. Then, there is a class of problems that we can solve with basic symmetry breaking tools, but we cannot do it locally. An example problem is the \blockcol problem. Finally, there is a class of ``global'' problems that contain e.g. $2$-coloring.  }
    \label{fig:big_picture_paths}
\end{figure}
\todo{gjks je jinak}

\begin{corollary}[Classification of LCL Problems on Oriented Paths, see \cref{fig:big_picture_paths}]\label{cor:main}
For LCL problems on infinite oriented lines, or large enough oriented cycles in case of classes $\local(O(f(n))$, we have the following complexity classes
\begin{itemize}
	\item [(A)] $\local(O(1))$,
	\item [(B)] $\local(O(\log^*(n)))=\rlocal(O(\log^*(n)))=\tail=\CONT$,
	\item [(C)] $\BOREL= \BAIRE=\measure= \FIID$,
	\item [(D)] none of above. 
\end{itemize}
Moreover, to what class a given LCL problem $\Pi$ belongs is decidable, albeit $\pspace$-hard. 
\end{corollary}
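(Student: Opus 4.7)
The plan is to assemble the four collapses class-by-class from existing work combined with \cref{thm:main}, then to witness that the four classes are genuinely distinct and verify the decidability claim. The heavy lifting is done by \cref{thm:main} for class (C); everything else should be a bookkeeping of citations together with explicit separating LCLs.

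First I would verify the equalities inside class (B). The equality $\local(O(\log^*(n)))=\rlocal(O(\log^*(n)))$ follows from the standard derandomization of Naor--Stockmeyer type on paths, made explicit in \cite{balliu2019LCLs_on_paths_decidable}; the equality $\local(O(\log^*(n)))=\CONT$ is a direct application of Bernshteyn's ``local equals continuous'' principle \cite{Bernshteyn2021local=cont}; and the equality $\tail=\local(O(\log^*(n)))$ on infinite oriented lines is the specialization of the Holroyd--Schramm--Wilson \cite{HolroydSchrammWilson2017FinitaryColoring} and Grebík--Rozhoň \cite{grebik_rozhon2021toasts_and_tails} results to dimension one with inputs. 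For class (C), the chain $\borel=\baire=\measure=\fiid$ is exactly \cref{thm:main}.

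Next I would exhibit separating LCLs to rule out any further collapse. For $(\text{A})\subsetneq(\text{B})$, the witness is $3$-coloring, which lies in $\local(O(\log^* n))$ but not in $\local(O(1))$ by the classical Ramsey argument recorded in \cite{balliu2019LCLs_on_paths_decidable}. For $(\text{B})\subsetneq(\text{C})$, I would use the \blockcol problem (or a variant from \cite{brandt_grebik_grunau_rozhon2021classification_of_lcls_trees_and_grids}): a $\borel$ algorithm can exploit the global partitioning of the input into blocks, while no $\local(o(n))$ algorithm can, since locally two blocks are indistinguishable. For $(\text{C})\subsetneq(\text{D})$ the witness is proper vertex $2$-coloring, which is ruled out of $\fiid$ by the correlation-decay argument sketched in the introduction (and which is then ruled out of the three remaining class-(C) models by \cref{thm:main}).

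Finally I would establish the decidability claim. Balliu et al.~\cite{balliu2019LCLs_on_paths_decidable} already give a $\pspace$ algorithm deciding membership in $\local(O(1))$ and $\local(O(\log^* n))$, together with a matching $\pspace$-hardness lower bound at this level; hardness of the full four-way classification is thereby inherited for free. For membership in $\borel$, decidability comes from the combinatorial characterization produced along the proof of \cref{thm:main}: one reformulates ``$\Pi\in\borel$'' as a reachability/connectivity property of a finite automaton built from $(\Sigma_{in},\Sigma_{out},r,\fP)$, and checks that this automaton has size bounded polynomially in $|\Sigma_{in}|,|\Sigma_{out}|,r$, so its relevant property is testable in $\pspace$. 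I expect the main obstacle to be exactly this last step -- upgrading the mere decidability of the $\borel$-membership criterion emerging from \cref{thm:main} to a $\pspace$ certificate -- while the rest is immediate from the cited classifications and from \cref{thm:main} itself.
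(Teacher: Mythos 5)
Your plan is essentially the same decomposition the paper uses: the corollary is an assembly of prior classifications with \cref{thm:main} supplying class (C), and the paper does not give a separate proof beyond pointing to the relevant literature. Your choice of separating problems ($3$-coloring, \blockcol, $2$-coloring) matches the examples in \cref{fig:big_picture_paths}, and the citations for the class-(B) equalities are the right ones (though the $\local=\rlocal$ derandomization on paths traces back to \cite{chang2016exp_separation} rather than \cite{balliu2019LCLs_on_paths_decidable}, a cosmetic point).

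There is one genuine factual error in your last paragraph. You assert that ``Balliu et al.~\cite{balliu2019LCLs_on_paths_decidable} already give a $\pspace$ algorithm deciding membership in $\local(O(1))$ and $\local(O(\log^* n))$.'' The paper explicitly flags the opposite: it is \emph{not} known whether deciding membership in $\local(O(1))$ or $\local(O(\log^* n))$ lies in $\pspace$ (see the remark following \cref{cl:paths_decidability}). What \cite{balliu2019LCLs_on_paths_decidable} gives is decidability of those classes plus a $\pspace$-hardness reduction; $\pspace$ membership is established in this paper only for the $\borel$-membership question (\cref{cl:paths_decidability}). Your error is harmless for the corollary as stated, since it claims only decidability and $\pspace$-hardness of the four-way classification; decidability of (A) and (B) plus the $\pspace$-completeness of (C) from \cref{thm:main} together give decidability, and the $\pspace$-hardness of the full classification is indeed inherited ``for free'' from the Balliu et al.\ reduction, exactly as you say. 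But if you want to keep the last paragraph in a written-up version, you should replace the false $\pspace$-membership claim with the correct statement (decidability only), and note that the $\pspace$-hardness of \emph{Borel membership specifically} requires the additional adaptation carried out in \cref{cl:paths_hardness}, since the Balliu et al.\ reduction only directly yields hardness of distinguishing $\local(O(1))$ from $\local(\Omega(n))$.
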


We do not discuss the classes $\tail, \CONT, \unifCONT$ or block factors here although these classes are present in \cref{cor:main,fig:big_picture_paths}. They are discussed in \cite{grebik_rozhon2021toasts_and_tails} for the more general setup of grids.

%The picture is very clean. 
%We find that the stronger variants of $\borel$, that is $\measure, \FIID, \baire$ still contain the same problems. 
\paragraph{Derandomization Perspective}
The relation of classes $\measure$ and $\FIID$ to $\borel$ is analogous to the relation of randomized algorithms to deterministic algorithms. This means that \cref{thm:main} can be seen as a \emph{derandomization} result. The topic of derandomization is in the center of interest in distributed algorithms \cite{chang2016exp_separation,ghaffari_harris_kuhn2018derandomizing,ghaffari_kuhn_maus2017slocal,ghaffari2018congest-derandomizing,RozhonG19,ghaffari_grunau_rozhon2020improved_network_decomposition} and in complexity theory in general \cite{arora_barak2009computational_complexity_modern_approach,goldreich2008computational_complexity_book}. 
We are not aware of similar derandomization results in the area of descriptive combinatorics, except the case of LCLs without inputs on paths in \cref{sec:no_inputs}. 
For concrete problems, like the famous circle squaring problem, the derandomization of the construction (that is, replacing measurable pieces by borel measurable pieces) was done in the work of \cite{Circle} that improved the previous ``measurable version'' \cite{OlegCircle}. 

In general, it is known that randomness helps if we do not bound the expansion of the graph class under consideration.
As an example we recall that proper vertex $3$-coloring or perfect matching is not in the class $\borel$ for infinite $3$-regular trees \cite{DetMarks}. This implies no nontrivial deterministic local algorithm \cite{Bernshteyn2021LLL}. On the other hand, the two problems are in the class $\measure$ and $\baire$ \cite{BrooksMeas}  and the $3$-coloring problem, in fact, admits a nontrivial randomized local algorithm \cite{ghaffari2020Delta_coloring}.
What if the graph family is of subexponential growth? The celebrated conjecture of Chang and Pettie that the deterministic complexity of the Lovász Local Lemma (LLL) problem is $O(\log n)$\footnote{They conjectured that the randomized complexity is $O(\log\log n)$ which is equivalent to this claim. }  would imply that if the graph class is of subexponential growth, the class of ``LLL-type problems'', the only local class where randomness helps essentially, is not present \cite{chang2016exp_separation,chang2017time_hierarchy}. 
We conjecture that it is a general phenomenon that randomness does not help in graphs of subexponential growth. In particular, we conjecture that our result that $\borel = \measure = \fiid$ on paths holds for all graphs of subexponential growth.
We note that this is not known even for $2$-dimensional grids, see \cite{grebik_rozhon2021toasts_and_tails}.

The classification of \cref{thm:main} is decidable, though, in fact, $\pspace$-hard \cite{balliu2019LCLs_on_paths_decidable}. We think it is an exciting complexity-theoretic problem to understand whether it is in fact $\pspace$-complete \cite{balliu2019LCLs_on_paths_decidable}. The fact that the classification from \cref{cor:main} is decidable corresponds to the fact that there is a reasonable combinatorial classification of problems in each class in \cref{fig:big_picture_paths}. We will discuss the classification of the problems in $\borel$ relevant for \cref{thm:main} in \cref{subsec:intuition_about_main}.

\section{Warm-up: The Case of No Inputs}
\label{sec:no_inputs}

In this section we explain the following folklore wisdom: an LCL without inputs can be constructed on the graph $G_\alpha=(\mathbb{S}^1,E)$, from \cref{sec:intro}, if and only if the local complexity of the problem is $O(\log^* n)$. This follows from a classification of those problems in the distributed world \cite{brandt_grids} combined with the fact that irrational rotations are ergodic.
%We are not aware of a work that explicitly mentioned this connection. 
This should be contrasted with \cref{thm:main} that shows that this is no longer case if one considers the general class of LCLs  (with inputs). 

We will use that every LCL problem can be presented in a so-called normal form (\cref{def:normal_form_line}). In this form, the checking algorithm considers only a pair of neighboring nodes in the cycle. We can restrict the whole discussion just to problems in the normal form. 
If the problem $\Pi$ has no inputs, we can hence assign it an automaton $A = A(\Pi)$ with states from $\Sigma_{out}$ and transition given by the pairs $\sigma_1, \sigma_2 \in \Sigma_{out}$ that are compatible with each other. 
For example, automatons for some problems are in \cref{fig:automatons}. 

\begin{figure}
    \centering
    \includegraphics[width = .8\textwidth]{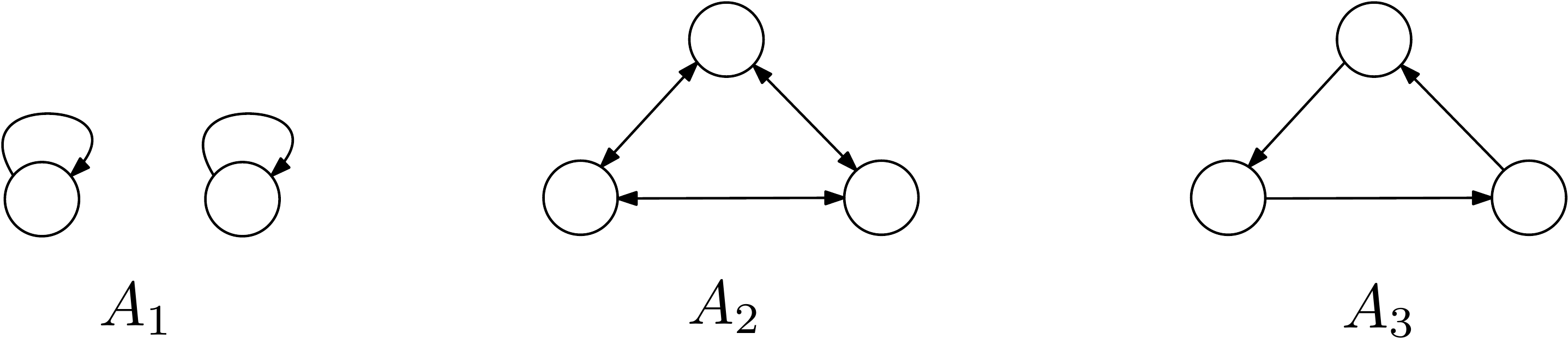}
    \caption{The figure contains automatons for three different problems of local complexities $O(1)$, $\Theta(\log^* n)$, $\Theta(n)$. $A_1$ represents a problem where all nodes are to be labelled with color $1$, or all nodes should be labeled with color $2$. The automaton $A_2$ corresponds to the $3$-coloring problem. The automaton $A_3$ corresponds to the problem \markkth for $k=3$.}
    \label{fig:automatons}
\end{figure}

\paragraph{Distributed Classification}
The classification theorem from \cite{brandt_grids} asserts that the complexity of the LCL problem $\Pi$ can be deduced by looking at its automaton $A$. We will now sketch its proof. It has three parts:
\begin{enumerate}
    \item If the automaton $A$ contains a loop at some node $\sigma$ (\cref{fig:automatons}) $\Pi$ can be solved in $O(1)$ rounds (in fact, $0$ rounds). 
    On the other hand, if $A$ does not contain a loop, any solution of it yields a solution to a problem of coloring the graph with $|\Sigma_{out}| = O(1)$ many colors that is known to require $\Omega(\log^* n)$ rounds \cite{linial1987LOCAL}.  
    \item Suppose that there is $\sigma \in A$ and some $k_0 \ge 0$ such that there is a closed walk from $\sigma$ to $\sigma$ in $A$ of any length $k \ge k_0$. That is, $A$ contains an ergodic component. Such problems can be solved in $O(\log^* n)$ rounds as follows. 
    
    It is known that in $O(\log^* n)$ rounds we may mark a set of points $S$ such that any two consecutive points in $S$ are either $k$ or $k+1$ hops apart. The solution is to label those points by $\sigma$ and extend the labeling between two consecutive points via a walk from $\sigma$ to $\sigma$ of length $k$ or $k+1$ in additional $k+1$ steps. 
    
    \item It is known that oriented graphs without ergodic component are such that each of their strongly connected components is a blowup of a consistently oriented cycle. The automaton consisting of a consistently oriented cycle of length $k$ corresponds to a problem of marking every exactly $k$-th element of the input graph. We call this general problem \markkth. For $k=2$ this is just $2$-coloring and even for general $k$, this problem and its generalization to problems with automatons without ergodic component are easily seen to require $\Omega(n)$ rounds. 
\end{enumerate}

\paragraph{Extension to the Cycle}

We can now consider the problem of labeling the nodes of $\mathbb{S}^1$ with labels from $\Sigma_{out}$ such that each color class is a finite union of intervals. 

On one hand, consider a problem $\Pi$ with an ergodic component. That is, assume there is $\sigma$ and $k_0 \ge 0$ such that for all $k \ge k_0$, there are closed walks from $\sigma$ to $\sigma$ of length $k$ in the respective automaton $A$.  
We claim this problem allows an interval coloring. The reason is that we can first choose an interval of sufficiently small length depending on $\alpha$ and $k_0$. This marks a set $S$ in $G$ with the property that any nodes of $S$ consecutive in $G$ are at least $k_0$ hops apart. On the other hand, there is a constant $\ell = \ell(\alpha, k_0)$ such that they are at most $\ell$ hops apart. 
We can label nodes in $S$ with $\sigma$ and think of $S$ as being split into at most $\ell - k_0 + 1$ classes depending on the distance to the next point in $S$ in $G$. 
This splits $S$ into finitely many intervals. For each of those classes we fill in the labels on the way to the next node in $S$. This labels all nodes in $G$ and we use only finitely many intervals. 

On the other hand, consider a problem $\Pi$ without an ergodic component. We have discussed that those problems are only little bit more general then the class of \markkth problem. But the argument that $2$-coloring cannot be constructed with intervals on $\mathbb{S}^1$ from \cref{sec:intro} carries over to \markkth problem and also to problems without an ergodic component. The general argument that such problems do not admit Borel or Lebesgue measurable solutions exploits, again, the fact the irrational rotations are ergodic. 

The same ideas work to formally show that all non-trivial complexity classes of LCLs without inputs that we consider are in fact equal on oriented paths. %, see \cref{fig_big_picture_paths}.
%We encourage the reader to fill in all missing details to get the following theorem.

%\todo{chybi tu zminene ostatni tridy, ale to je podle me v pohode}

\begin{theorem}
For LCL problems {\bf without input} on oriented lines we have that $\Pi \in \local(O(\log^* n))$ if and only if $\Pi \in \borel$. The latter means that there is a Borel solution to $\Pi$ on the cycle graph $G = (\mathbb{S}^1, E)$. 
\end{theorem}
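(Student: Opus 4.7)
The plan is to organize the proof around the classification of LCLs without input via the automaton $A(\Pi)$ associated to $\Pi$ in normal form, then to match each complexity class in the distributed classification with a corresponding structural phenomenon on the circle graph $G_\alpha=(\mathbb{S}^1,E)$, using ergodicity of the irrational rotation as the only non-elementary ingredient.

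For the forward direction, suppose $\Pi\in\local(O(\log^* n))$. By the distributed classification recalled in the section, either $A(\Pi)$ has a loop at some state $\sigma\in\Sigma_{out}$, in which case the constant map $v\mapsto \sigma$ gives a (trivial, hence Borel) solution, or $A(\Pi)$ has an ergodic component: a state $\sigma$ admitting a closed walk of every length $k\ge k_0$ for some $k_0$. In the latter case I would choose a half-open interval $I\subseteq\mathbb{S}^1$ of length $\beta>0$ small enough that any two consecutive elements of an orbit entering $I$ are at graph-distance in some bounded window $[k_0,\ell]$, where $\ell=\ell(\alpha,\beta)<\infty$; this is possible by the three-gap theorem for irrational rotations (or by a direct compactness argument). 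Label every point of $S:=I$ by $\sigma$; this splits $S$ into finitely many Borel pieces according to the gap to the next $\sigma$-point along the oriented path, and between two consecutive $\sigma$-marked points at graph-distance $k\in[k_0,\ell]$ I fill in the labels using any closed walk $\sigma\to\sigma$ of length $k$ in $A(\Pi)$. The resulting coloring uses only finitely many intervals and is in particular Borel.

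For the converse, suppose $\Pi\notin\local(O(\log^* n))$. By the classification, $A(\Pi)$ has no ergodic component, so every strongly connected component of $A(\Pi)$ is a blow-up of a consistently oriented cycle of some length $k\ge 2$. Any $\Pi$-coloring $f:\mathbb{S}^1\to\Sigma_{out}$ therefore induces, by projecting onto the cycle-structure of the SCC visited by each bi-infinite orbit, a ``phase'' function $\varphi:\mathbb{S}^1\to\mathbb{Z}/k\mathbb{Z}$ such that the set $S_i=\varphi^{-1}(i)$ is carried to $S_{i+1}$ by the rotation $\alpha$, i.e.\ it solves a \markkth-type problem. If $f$ is Borel then so is each $S_i$; but then $S_0$ is a Borel set invariant under the rotation by $k\alpha$, which is again irrational. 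By ergodicity of irrational rotations with respect to Lebesgue measure, $S_0$ has Lebesgue measure $0$ or $1$, contradicting the fact that the $S_i$ partition $\mathbb{S}^1$ into $k\ge 2$ sets each meeting every orbit with the correct frequency $1/k$.

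The main obstacle, and the step requiring the most care, is the passage from a general Borel $\Pi$-solution to the invariant set $S_0$ in the converse direction: one must argue that on each orbit the coloring is eventually periodic with period dividing $k$, select a Borel ``phase'' in a shift-equivariant manner, and verify that the resulting $S_0$ is genuinely Borel. Once this reduction is in place, the ergodicity step is immediate, and the forward direction is essentially the same algorithmic template used in the $\log^* n$ distributed algorithm, transposed to the circle by the three-gap structure of irrational rotations.
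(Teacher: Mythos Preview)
Your proposal is correct and follows essentially the same approach as the paper's sketch: for the forward direction, build an interval-based solution from an ergodic component of the automaton (the paper does this too, without naming the three-gap theorem explicitly); for the converse, extract a \markkth-type phase from any Borel solution and contradict ergodicity of the (power of the) irrational rotation. The phase-extraction you flag as the main obstacle does need the care you describe but resolves by standard arguments---discard the null set of orbits that ever transition between SCCs (Poincar\'e recurrence, or just that a Borel set meeting each orbit finitely is null), then use ergodicity of $R_\alpha$ to select a single SCC of full measure with some period $k\ge 2$, after which your $S_0$ argument goes through verbatim---and this is exactly what the paper means when it says the $2$-coloring argument ``carries over'' to problems without an ergodic component.
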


That is, in \cref{fig:big_picture_paths} without inputs there are only three classes of problems: those admitting a solution where every node uses the same label, then comes the interesting class $\local(O(\log^* n)) = \borel = \dots$ containing basic symmetry breaking problems, and finally there are problems that are variants of the \markkth problem. 
Deciding what is the complexity of a given problem can be done in polynomial time.

\section{Intuition for The Case of Inputs}
\label{subsec:intuition_about_main}

\begin{comment}
Intuitively, there are two reasons why an LCL is not local. Either the LCL is of type exactly-$\ell$-independent set, or some information has to travel an unbounded distance \todo{V: I am not sure if I agree with myself}. 
In Borel solutions, only the first problem stays, while we do not care about potentially unbounded distances. 
\end{comment}

In previous section we observed that for LCLs {\bf without inputs} the class $\local(O(\log^* n))$ is equal to the class $\borel$. 
One should think of this phenomenon as caused by the fact that there are not that many LCL problems without input, not that the computational power of the two models is the same.

The situation for general LCL problems is different.
Intuitively, the computational power for the classes $\local$ does not have access to any global information, while for $\BOREL$ it does.
This can be illustrated by the following example problem.

%\paragraph{A Problem Separating $\borel$ and $\local(O(\log^* n))$}

\begin{definition}[\blockcol, \cref{fig:3coloring_of_blocks}]
The problem \blockcol is defined as follows. 
There is binary input $\Sigma_{in} = \{L, \emptyset\}$ that should be interpreted as inducing consecutive blocks on the input path (cycle). 
The output set is $\Sigma_{out} = \{\tR, \tG, \tB\}$. 
The problem asks to provide a $3$-coloring of the given blocks. 
That is, the solution is checked as follows: $\fP$ accepts $((*,\emptyset) \times (\tA, \tA))$ for $\tA \in \{\tR, \tG, \tB\}$ (the nodes in the same block agree on their color) and $((*, L), (\tA, \tB)$ for $\tA, \tB \in \{\tR, \tG, \tB\}$ and $\tA \not = \tB$ (neighboring blocks have different color). 
\end{definition}

\begin{figure}
    \centering
    \includegraphics[width = .8\textwidth]{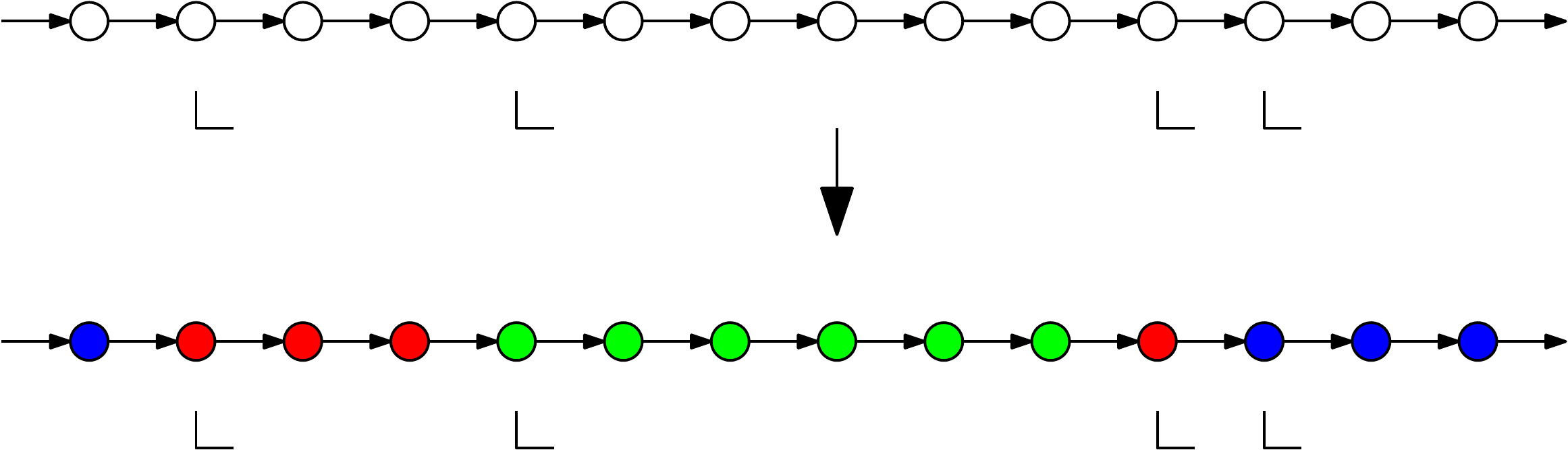}
    \caption{The input and a solution to the problem \blockcol. }
    \label{fig:3coloring_of_blocks}
\end{figure}

Note that since there is no a priori bound on the distance between L's it is not hard to show that \blockcol is not in $\local(O(\log^*(n)))$.
On the other hand, the computation that is allowed in $\BOREL$, see the intuition in \cref{sec:intro}, allows us to work with blocks of unbounded size as if their size was bounded, as well as checking whether input letters $L$ appear at all.
That is \blockcol is in $\borel$ but not in $\local(O(\log^* n))$.

\paragraph{What Problems are not in $\borel$?}

We used the fact that irrational rotations are ergodic to show that $2$-coloring is not in $\borel$.
On a more concrete level, what happens in $2$-coloring is that if a node picks a color, say blue, then all the nodes in the same oriented path already know their color.
Said differently, the decisions of all nodes are completely dependent. 
However, it can be shown that every measurable function needs to exhibit some correlation decay between decisions of nodes that are far away from each other.

Problems that force a full dependency behavior will serve us as a base to show that a given problem is not in $\borel$.
For those problems, it is a bit simpler to think about them as having input on the edges, not vertices.

\begin{definition}[Permutation problem]\label{def:PermProblem}
Each edge is labeled with one permutation $\pi$ from the set $P$. Those permutations are defined on a set $\Sigma_{out}$. The task is to label each node with a label from  $\Sigma_{out}$ such that for consecutive $u, v$ we have $\pi_{uv}(\sigma_u) = \sigma_v$. 
\end{definition}

Note that having an instance of this problem and a node that picks its output label fully determines the decisions of all nodes in his component.
It is easy to see that a permutation problem is solvable in $0$ rounds of $\local$ model if there is a $\sigma \in \Sigma_{out}$ that is a fixed point of all permutations of $P$. If this is not the case, we say that $P$ is \emph{mixing}. 

Our proof of \cref{thm:main} consists of two steps. First, we show that an LCL $\Pi$ allows for a solution with a $\borel$ algorithm unless $\Pi$ is at least as hard as some mixing problem $P$ (in fact, we work with a more general and more technical definition). 
Second, we show that  mixing problems are not contained in any of the classes $\borel, \measure, \fiid, \baire$.

\section{Definitions and Complexity classes}
\label{subsec:def_local}

%What is LCL problem...
%Let $\Pi=(\Sigma_{in},\Sigma_{out},\mathcal{P})$ be an LCL problem.
We use standard graph theoretic and set theoretic notation. The symbol $\triangle$ is used for symmetric difference of two sets. 
Any \emph{graph} we consider is usually either an infinite oriented path or a finite oriented cycle.
A correct solution to an LCL $\Pi$ is a $\Sigma_{in}$-$\Sigma_{out}$-labeling that satisfies the constraints from $\Pi$.
We call such a labeling \emph{$\Pi$-coloring}.

We start by defining the complexity classes from distributed computing, with focus on finite oriented cycles. A local algorithm is the following (see \cite{grebik_rozhon2021toasts_and_tails} for a more formal and more general definition):

\begin{definition}[A deterministic local algorithm (for oriented cycles)]
\label{def:local_algorithm}
Let $\Sigma_{out}$ be a finite set.
A \emph{local algorithm $\mathcal{A}$} is a sequence of pairs $\{\mathcal{A}_n,f(n)\}_{n\in\mathbb{N}}$, where $f(n)\in \mathbb{N}$ and $\mathcal{A}_n$ is a function such that its input is an oriented path of length $2f(n)+1$ (that is, the path has radius $f(n)$ from the perspective of the middle node) where each node has, moreover, a label from some space. The output is a label from $\Sigma_{out}$. 

Applying a local algorithm to a cycle means applying it to the $f(n)$-hop neighborhood of all nodes. 
\end{definition}

\begin{definition}[Complexity classes -- Distributed computing]
We say that $\Pi$ is in the class $\local(f)$ if for every $n\in \mathbb{N}$ there is a deterministic algorithm $\fA_n$ of locality $f(n)$ that takes as input $n$-node cycle labeled with unique identifiers from $[\poly(n)]$ and labeled with labels from $\Sigma_{in}$, and outputs a $\Pi$-coloring.
\end{definition}

Direct bridge between computing on finite cycles and on infinite paths is formed by so-called \emph{oblivious local algorithm}.
This is a variant of the definition of local algorithm that does not know the size of the input graph, therefore, we can run them on finite or infinite graphs without any difficulty. Moreover, the algorithm is randomized: instead of getting a unique identifier, every node has a random string, i.e., is labelled with $2^\mathbb{N}$. The algorithm has to be correct with probability $1$. 
Formally, an oblivious local algorithm is a pair $\fA=(\fC,g)$, where $\fC$ is a measurable collection of rooted $2^\mathbb{N}$-$\Sigma_{in}$-labeled finite oriented paths and $g:\fC\to \Sigma_{out}$ is a measurable function.
An execution of $\fA$ is defined as follows.
Given a graph $G$ that is $2^\mathbb{N}$-$\Sigma_{in}$-labeled and a node $v\in V(G)$, we let $v$ discover its neighborhoods until it encounters element of $\fC$ and then define $\fA(v)$ to be evaluation of $g$ on this neighborhood.
An oblivious local algorithm $\fA$ solves an LCL $\Pi$ if $\fA$ produces a $\Pi$-coloring with probability $1$. 
We say that an oblivious local algorithm $\fA$ is in the class $\tail(f)$, for some $f:\mathbb{N}\to \mathbb{N}$, if
$$\P(R_\fA>r)\le \frac{1}{f(r)},$$
where $\P(R_\fA>r)$ is the supremum over all events where we first fix $\Sigma_{in}$-labeling on rooted oriented path of radius $r$ and then ask what is the probability that the root need to explore more than this neighborhood.

\begin{definition}
We say that an LCL $\Pi$ is in the class $\tail$ if there is an oblivious randomized algorithm $\fA=(\fC,g)$ that solves $\Pi$ and such that $\fA\in \tail(\omega(1))$.
\end{definition}

There is a tight connection between oblivious local algorithms and finitary factors of iid processes, see \cite[Section~2]{grebik_rozhon2021toasts_and_tails} for precise discussion.
Moreover, we have the following theorem:

\begin{theorem}[\cite{grebik_rozhon2021toasts_and_tails}]
Let $\Pi$ be an LCL problem on paths.
Then $\Pi$ is in the class $\tail$ if and only if it is in the class $\local(O(\log^*(n)))$.
\end{theorem}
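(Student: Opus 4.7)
The plan is to prove the two inclusions $\local(O(\log^*(n))) \subseteq \tail$ and $\tail \subseteq \local(O(\log^*(n)))$ separately.

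For the forward direction, fix a deterministic local algorithm $\mathcal{A} = \{\mathcal{A}_n, f(n)\}$ with $f(n) = O(\log^* n)$ solving $\Pi$. The plan is to simulate the $[\poly(n)]$-identifiers required by $\mathcal{A}$ using the iid $2^\mathbb{N}$-input. Each node reads progressively longer prefixes of its random label; the stopping rule $\mathcal{C}$ requires that in some radius $\rho(L)$ ball, all prefixes of the current length $L$ are pairwise distinct, and the output $g$ runs $\mathcal{A}_n$ on these prefixes (with $n = \rho(L)$, exploiting that on oriented cycles the $\log^*$ algorithm is essentially oblivious to the exact value of $n$). A union bound gives that distinctness fails with probability $O(\rho(L)^2 / 2^L)$; choosing $\rho(L) = 2^{L/3}$ and noting that the consequent exploration depth is $\rho(L) + f(\rho(L)) = O(2^{L/3})$, one readily obtains $\mathbb{P}(R_{\mathcal{A}} > r) \le 1/h(r)$ for some $h \in \omega(1)$, placing $\Pi$ in $\tail$.

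For the converse, suppose $\Pi \in \tail$ via some oblivious $\mathcal{A}$. By Balliu et al.'s classification \cite{balliu2019LCLs_on_paths_decidable}, the local complexity of any LCL on oriented cycles is $O(1)$, $\Theta(\log^* n)$, or $\Theta(n)$, so it suffices to rule out the last case. The $\Omega(n)$ problems include in particular sub-problems of \markkth type (see \cref{sec:no_inputs}), which are rigid in the following sense: there is a subfamily of inputs on which the label at any single node determines the labels at all other nodes in the component via permutation constraints along the path. On the other hand, any algorithm with $\mathbb{P}(R_{\mathcal{A}} > r) \to 0$ exhibits correlation decay — the outputs at $u$ and $v$ with $\mathrm{dist}(u,v) > 2r$ depend on disjoint portions of the iid input with probability $1 - 2\,\mathbb{P}(R_{\mathcal{A}} > r)$, so their joint law is $O(\mathbb{P}(R_{\mathcal{A}} > r))$-close in total variation to the product of marginals. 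Combined with translation invariance of the iid process, this asymptotic independence is incompatible with the rigid deterministic coupling forced by the global constraints (one recovers, as a special case, the classical fact that $2$-coloring of $\mathbb{Z}$ is not fiid).

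The main obstacle I expect is the converse direction, specifically making the reduction from a general $\Omega(n)$-round LCL to a concrete correlation-decay contradiction fully rigorous. The cleanest path is probably to first argue that every LCL with complexity $\Theta(n)$ on paths admits a restricted input-labelling under which the $\Pi$-colorings are in bijection with $\Pi'$-colorings for some purely permutation-type problem $\Pi'$ (in the sense of \cref{def:PermProblem}), and then to apply the ergodic-theoretic obstruction uniformly to all such $\Pi'$. The forward direction is routine modulo checking measurability of the stopping rule $\mathcal{C}$ and that the quantitative tail bound meets the $\omega(1)$ requirement in the definition of $\tail$.
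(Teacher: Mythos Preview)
Note first that the paper does not actually prove this theorem: it is quoted from \cite{grebik_rozhon2021toasts_and_tails} and used as a black box. So there is no in-paper proof to compare against, and I evaluate your sketch on its own merits.

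The forward direction has a consistency gap. Different nodes will generally stop at different prefix lengths $L$, and hence simulate $\mathcal{A}_n$ for different $n$ and, more importantly, \emph{with different identifier assignments}: node $u$'s view of $v$ is the length-$L_u$ prefix of $v$'s random string, whereas $v$ itself uses the length-$L_v$ prefix. There is no reason the two runs agree on the shared edge $uv$, even granting obliviousness to $n$. The standard route is to first invoke the Chang--Pettie normalization \cite{chang2017time_hierarchy}: any $\local(O(\log^* n))$ problem on paths is solvable in $O(1)$ rounds once a distance-$k$ coloring is available for some constant $k=k(\Pi)$, and such a coloring is ffiid with tower-function tail. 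Your parenthetical ``essentially oblivious to $n$'' gestures at this reduction but does not actually carry it out.

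The converse has a genuine gap. It is false that every $\Theta(n)$ problem restricts, on some input family, to a permutation problem in the sense of \cref{def:PermProblem}. The \blockcol problem (\cref{fig:3coloring_of_blocks}) has local complexity $\Theta(n)$, yet on no input does a single node's output determine all others: the ``different colors across an $L$-edge'' constraint leaves two choices, so it is a relation and not a bijection. The argument that \blockcol is not in $\tail$ is of a different shape: on inputs with arbitrarily long blocks, a node deep inside a block sees only $\emptyset$-labels in its coding neighborhood, so by translation invariance its output distribution is some fixed $\mu$ on $\{\tR,\tG,\tB\}$; consistency within a block then forces $\mu$ to be a point mass, but then all long blocks receive the same color, violating properness at the $L$-edges. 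The general proof in \cite{grebik_rozhon2021toasts_and_tails} goes through the explicit combinatorial characterization of the $\Theta(n)$ class from \cite{balliu2019LCLs_on_paths_decidable}, not through a reduction to permutation problems; your ``cleanest path'' is exactly the step that fails.
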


These classes also coincide with the class $\rlocal(O(\log^*(n)))$, see \cite{chang2016exp_separation}.
Next we define the complexity class $\borel$.
In \cref{sec:intro}, we described two possible approaches to ``Borel algorithms'' (a) model with irrational rotation of the circle (b) countable iteration of MIS and $O(1)$ constructions.
Even though, as we show later, these models are the same and, in the context of oriented paths, capture the full computational power of Borel, we introduce the setting formally to help the reader pass to rather more abstract classes $\baire$ and $\measure$, as well as to the context of different families of graphs.

A \emph{standard Borel space} is a pair $(X,\fB)$ where $X$ is a set and $\fB$ is a $\sigma$-algebra of subsets of $X$ that is equal to a $\sigma$-algebra of Borel sets for some Polish topology $\tau$ on $X$, see \cite[Section~12]{KecClassic}.
That is to say, there is a \emph{Polish topology}\footnote{Every ``nice'' topological space is a Polish space: compact metric spaces, separable Banach spaces, $\mathbb{N}^\mathbb{N}$ etc.}, i.e., separable and completely metrizable, $\tau$ on $X$ such that $\fB=\fB_\tau$, where $\fB_\tau$ are the Borel sets generated by open sets from $\tau$.

As in the case of irrational rotations of the circle we consider graphs that are induced by a map.
Namely, a \emph{Borel autormorphism} $S:X\to X$ is a bijection that is Borel measurable.
We say that $S$ is aperiodic\footnote{An example of aperiodic Borel automorphism is any irrational rotation of the circle.} if $S^n(x)=x$ implies $n=0$ for every $x\in X$.
Define an oriented graph $G_S=(X,E)$, where $(x,y)\in E$ if and only if $y=S(x)$.
It is easy to see that $S$ is aperiodic if and only if connected components of $G_S$ are doubly infinite lines.

\begin{definition}[$\borel$]
We say that $\Pi$ is in the class $\borel$ if for every standard Borel space $(X,\fB)$, aperiodic Borel automorphism $S$ and a Borel map (partition) $i:X\to \Sigma_{in}$ there is a Borel map $f:X\to \Sigma_{out}$ such that the $\Sigma_{in}$-$\Sigma_{out}$-labeling induced by $(i,f)$ is a $\Pi$-coloring of the graph $G_S$.
\end{definition}

\subsection{Descriptive classes}

A natural way how to relax the requirement on a function to be Borel measurable is to use probability measures.
Namely, in the example with circle we could have asked for $2$-coloring of $G_\alpha$ that is defined merely almost everywhere (with respect to Lebesgue measure), or equivalently that is measurable with respect to the $\sigma$-algebra of all Lebesgue measurable sets (the measurable completion of Borel sets).
In the context of inifnite paths, these concepts are the same.
However, even in the proof that there is no Borel $2$-coloring of $G_\alpha$ we actually proved that there is no Lebesgue measurable such coloring using the ergodicity of the rotation.

We say that $(X,\fB,\mu)$ is a \emph{standard probability space} if $(X,\fB)$ is a standard Borel space and $\mu$ is a Borel probability measure defined on $\mathcal{B}$, see \cite[Section~17]{KecClassic}.
Usually we assume that the $\sigma$-algebra $\fB$ is understood from the context and write just $(X,\mu)$.
Note that if $X$ is uncountable, then the space of all Borel probability measures on  $(X,\fB)$ is also uncountable.\footnote{In fact, it is naturally a standard Borel space.}

\begin{definition}
We say that $\Pi$ is in the class $\measure$ if for every standard Borel space $(X,\fB)$, aperiodic Borel automorphism $S$, Borel probability measure $\mu$ on $(X,\fB)$ and a Borel map (partition) $i:X\to \Sigma_{in}$ there is a Borel map $f:X\to \Sigma_{out}$ such that the $\Sigma_{in}$-$\Sigma_{out}$-labeling induced by $(i,f)$ is a $\Pi$-coloring of the graph $G_S$ $\mu$-almost everywhere.
\end{definition}

A topological analogue of the measurable relaxation is given in terms of meager, resp. comeager, sets.
Recall that if $(X,\tau)$ is a topological space, then a $\tau$-closed set $Z$ is \emph{nowhere dense} if it does not contain any open set.
A set $M\subseteq X$ is called \emph{$\tau$-meager} if it is a subset of a countable union of closed nowhere dense sets.
A complement of a $\tau$-meager set is a $\tau$-comeager set.
When the context is clear we not mention the topology and say e.g. comeager.
The intuition is that meager sets are topoogically small.
Indeed, if $(X,\tau)$ is a Polish space, then every comeager set is dense and, in particular, non-empty, see \cite[Section~16]{KecClassic}.

We defined a standard Borel space as a pair $(X,\mathcal{B})$, where $\mathcal{B}$ is a $\sigma$-algebra of Borel sets for some Polish topology $\tau$ on $X$.
However, once $(X,\fB)$ is fixed there are many Polish topologies for which $\fB$ is the Borel $\sigma$-algebra.
For example consider the interval $[0,1]$ with the standard topology and with the topology that makes $\{1\}$ isolated point (while keeps the topology on $[0,1)$).
These topologies are both Polish and give the same Borel $\sigma$-algebra, in fact every Borel set is clopen in some finer Polish topology that gives the same Borel sets, see \cite[Section~13]{KecClassic}.
Note that while in the standard topology $\{1\}\subseteq [0,1]$ is meager (closed nowhere dense), in the other topology it is not meager.

\begin{definition}
We say that $\Pi$ is in the class $\baire$ if for every standard Borel space $(X,\fB)$, aperiodic Borel automorphism $S$, Polish topology $\tau$ whose Borel $\sigma$-algebra is $\fB$ and a Borel map (partition) $i:X\to \Sigma_{in}$ there is a Borel map $f:X\to \Sigma_{out}$ such that the $\Sigma_{in}$-$\Sigma_{out}$-labeling induced by $(i,f)$ is a $\Pi$-coloring of the graph $G_S$ on a $\tau$-comeager set.
\end{definition}

%We conclude this subsection with a trivial observation.
%that is valid in a bigger generality.

\begin{claim}
Let $\Pi$ be an LCL and suppose that $\Pi$ is in the class $\borel$.
Then $\Pi$ is in the classes $\baire$ and $\measure$.
\end{claim}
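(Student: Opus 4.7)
The plan is essentially to observe that a Borel solution defined \emph{everywhere} is, in particular, a solution \emph{almost everywhere} and a solution on a \emph{comeager set}, no matter which Borel measure or which compatible Polish topology one chooses. So both containments are immediate consequences of unpacking the definitions.

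More concretely, suppose $\Pi \in \borel$ and fix any standard Borel space $(X,\fB)$, aperiodic Borel automorphism $S:X\to X$, and Borel partition $i:X\to \Sigma_{in}$. By definition of $\borel$, there exists a Borel map $f:X\to \Sigma_{out}$ such that the $\Sigma_{in}$-$\Sigma_{out}$-labeling induced by $(i,f)$ is a $\Pi$-coloring of $G_S$ at \emph{every} vertex $x\in X$. This same $f$ will serve as a witness for both $\measure$ and $\baire$.

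For the measurable case, let $\mu$ be any Borel probability measure on $(X,\fB)$. The set of vertices at which the $\Pi$-constraint fails for $(i,f)$ is empty, hence in particular $\mu$-null, so $(i,f)$ is a $\Pi$-coloring $\mu$-almost everywhere. This is exactly what is required, and since $\mu$ was arbitrary we conclude $\Pi\in\measure$.

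For the Baire category case, let $\tau$ be any Polish topology on $X$ whose Borel $\sigma$-algebra equals $\fB$. Again the failure set for $(i,f)$ is empty, and the empty set is $\tau$-meager, so $(i,f)$ is a $\Pi$-coloring on the $\tau$-comeager set $X$ itself. As $\tau$ was arbitrary, $\Pi\in\baire$. Since every step here is just a definition chase, there is no real obstacle; the content of the claim is just the observation that ``Borel everywhere'' is a strictly stronger requirement than ``Borel modulo a null set'' or ``Borel modulo a meager set''.
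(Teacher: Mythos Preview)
Your proof is correct and is exactly the intended argument: the paper states this claim without proof, treating it as an immediate consequence of the definitions, and your definition chase spells out precisely why---a Borel solution valid at every point has empty failure set, which is both $\mu$-null and $\tau$-meager for any choice of $\mu$ or $\tau$.
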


Last class that should be mentioned to make the picture complete is the class $\CONT$.
LCLs that admit so-called \emph{continuous} solution were extensively studied in \cite{GJKS,Bernshteyn2021local=cont}.
Ultimately, it was proved independently by Bernshteyn \cite{Bernshteyn2021local=cont} and Seward \cite{Seward_personal} that $\local(O(\log^*(n)))=\CONT$ for LCLs without inputs on Cayely graphs of finitely generated groups.
We refer the reader to \cite[Section~7.1]{grebik_rozhon2021toasts_and_tails} for a high-level explanation of the class $\CONT$ and a sketch of the proof that it is equal to $\local(O(\log^*(n)))$ for LCLs with inputs on $\mathbb{Z}^d$.

\subsection{Shifts}\label{subsec:shifts}

An important examples of Borel automorphism from the perspective of descriptive combinatorics and random processes are the Bernoulli shifts.
Since examples that show that given LCL is hard are of this type, we properly define all the involved notions.
In general, the main difficulty is to decide how to involve inputs in the definition.
Also, we remark that there is a slight change of perspective:
Before (a) nodes were elements of some set and edges were oriented pairs of nodes (b) input labeling and solution of a given LCL were maps that color nodes.
For shifts (a) nodes are infinite oriented paths with labeled vertices and, abstractly, two such graphs form an edge if one is a shift of the other (b) solution of a given LCL is a map to a space of solutions that is invariant under the shift-action.

Let $X$ be a set and consider the set of doubly infinite oriented lines labeled with $X$, i.e., $X^\mathbb{Z}$.
We denote by $\cdot$ the natural shift action $\mathbb{Z}\curvearrowright X^\mathbb{Z}$, that is
\begin{equation}
    (n\cdot x)(k)=x(n+k)
\end{equation}
for every $n,k\in \mathbb{Z}$ and $x\in X^\mathbb{Z}$.
Given two such spaces $X^\mathbb{Z}$ and $Y^\mathbb{Z}$ together with a map $f:X^\mathbb{Z}\to Y^{\mathbb{Z}}$, we say that $f$ is \emph{equivariant} if 
$$n\cdot f(x)=f(n\cdot x)$$
for every $n\in \mathbb{Z}$ and $x\in X^{\mathbb{Z}}$.

In general, there are two types of spaces of this form, moreover, endowed with natural topology, that we consider in this paper.
First type are the spaces of inputs or outputs and their subspaces, e.g., $\Sigma_{in}^{\mathbb{Z}}$ and $\Sigma_{out}^{\mathbb{Z}}$.
Second type are the spaces for breaking symmetries in determinstic and randomized setting, e.g., $(2^{\mathbb{N}})^{\mathbb{Z}}$, i.e., paths labeled with infinite strings of $0$s and $1$s.
The spaces $\Sigma_{in}$ and $\Sigma_{out}$ are endowed with the discrete topology, $2^\mathbb{N}$ with its standard compact metrizable topology and all the other spaces with the product topology.
It is a standard fact that the spaces mentioned above are compact and all the spaces we create from them will be Polish spaces.
When we say Borel map we always refer to the Borel $\sigma$-algebra coming from this topology.

Given an LCL $\Pi$, we define $X_\Pi\subseteq \Sigma_{in}^{\mathbb{Z}}\times \Sigma_{out}^{\mathbb{Z}}$ to be the space of all $\Pi$-colorings, that is, $x\in X_{\Pi}$ if $x$, viewed as $\Sigma_{in}$-$\Sigma_{out}$-labeling of infinite oriented path, is a $\Pi$-coloring.
It is easy to see that $X_\Pi$ is closed under the shift action.

\begin{definition}
We say that $\Pi$ is in the class $\FIID$ if there is an equivariant Borel map $f:\Sigma_{in}^\mathbb{Z}\times (2^{\mathbb{N}})^\mathbb{Z}\to \Sigma_{out}^{\mathbb{Z}}$ such that for every fixed $\fI\in \Sigma_{in}^\mathbb{Z}$ we have that 
$$(\fI,f(\fI,x)) \in X_{\Pi}$$
holds with probability $1$ with respect to the uniform measure on $(2^{\mathbb{N}})^\mathbb{Z}$.
\end{definition}

Note that if there were no inputs, then $\Pi$ is in the class $\FIID$ if and only if $\Pi$ admits so-called \emph{factor of iid process (fiid)} solution.
Also in that situation fiid is just a special case of a measurable solution, that is $\measure\subseteq \fiid$ without inputs.
In the case with inputs we can only say that $\borel\subseteq \FIID$.

\section{Proof of \cref{thm:main}}

\subsection{Normal Form}
\label{subsec:lcl_paths_normal_form}

As a usual first step, we show that every problem is equivalent to a nice problem.
Informally, LCL $\Pi$ is in the \emph{normal form} if its locality is, so to say, $1.5$.
Namely, given a candidate $\Sigma_{in}$-$\Sigma_{out}$-labeling every node needs to check its and their right neighbor inputs and output label (in fact checking just neighbors output is enough) to decide whether the candidate labeling is a $\Pi$-coloring.   
Since the definition of normal form is crucial for everything that follows, we state it formally.

%An \emph{LCL with input $\Sigma$} is a collection $\Pi$ that assigns to $A\in \Sigma$ the set $\tau(A)$ of ordered triplets from some set $b$ that are indexed by $(-,0,+)$.
%The space of $\Pi$-solutions is a subspace $X_\Pi\subseteq \Sigma^\mathbb{Z}\times b^\mathbb{Z}$, where $(x,y)\in X_\Pi$ if for every $i\in \mathbb{Z}$ we have $(y(i-1)),y(i),y(i+1)$ is one of the triplet that is assigned to $x(i)$.

\begin{definition}[Normal form of an LCL]
\label{def:normal_form_line}
The \emph{normal form} of an LCL is a triplet $\Pi = (\Sigma_{in}, \Sigma_{out}, \fP)$, where $\Sigma_{in}$ is the (finite) input alphabet, $\Sigma_{out}$ is (finite) output alphabet and $\fP : \Sigma_{in}^2 \times \Sigma_{out}^2 \rightarrow \{true, false\}$.
\end{definition}

\begin{fact}[Lemma 2 in \cite{balliu2019LCLs_on_paths_decidable}]
Every LCL $\Pi$ can be transformed into an LCL $\Pi'$ in the normal form such that the solution of $\Pi$ can be transformed to a solution of $\Pi'$ with a local algorithm with $O(1)$ locality and vice versa. 
\end{fact}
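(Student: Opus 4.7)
The plan is to compress the radius-$r$ predicate $\fP$ of $\Pi$ into a predicate that looks only at a single edge, by enlarging the output alphabet so that each vertex ``advertises'' a putative labeling of its own $r$-neighborhood, and by then asking neighbouring advertisements to agree on their overlap.

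Concretely, I would set $\Pi'=(\Sigma_{in},\Sigma'_{out},\fP')$ with
\[
\Sigma'_{out} \;=\; (\Sigma_{in}\times\Sigma_{out})^{\{-r,\ldots,r\}},
\]
so an element of $\Sigma'_{out}$ is a tuple, indexed by positions $-r,\ldots,r$, recording a candidate input--output pair at each position. For an oriented edge $u\to v$ carrying inputs $\iota_u,\iota_v$ and $\Sigma'_{out}$-outputs $a,b$, the predicate $\fP'$ would accept exactly when: (i) the $0$-th entry of $a$ has first coordinate $\iota_u$, and the $0$-th entry of $b$ has first coordinate $\iota_v$; (ii) the tuples $a$ and $b$ agree on the $2r$ positions on which they overlap after shifting the indexing of $b$ by $-1$; and (iii) the rooted $\Sigma_{in}$-$\Sigma_{out}$-labeled $r$-neighborhood encoded by $a$ is accepted by the original $\fP$.

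Next I would describe the two conversions. Given a $\Pi$-solution $f$, each vertex $v$ inspects its $r$-neighborhood in an $r$-round local computation and outputs the tuple of input--output pairs it sees; conditions (i)--(iii) are then immediate from the assumption that $f$ solves $\Pi$. Conversely, given a $\Pi'$-solution $f'$, each vertex outputs the second coordinate of the $0$-th entry of $f'(v)$, which is a $0$-round operation. Correctness of this reverse direction is the single place requiring a small argument: condition (ii) applied iteratively along the path forces the tuples $f'(v-r),\ldots,f'(v+r)$ to be shifted copies of one common $\Sigma_{in}$-$\Sigma_{out}$-labeling of the $r$-neighborhood of $v$; this common labeling coincides with the one obtained by reading $0$-th entries at these vertices (which is precisely the output $f$); and condition (iii) applied at $v$ then says exactly that $\fP$ accepts the $r$-neighborhood view of $f$ at $v$.

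The main, and essentially only, technical point is the propagation argument for (ii), which is a short induction on distance along the path. Since $r$ depends only on $\Pi$ and is therefore a constant, both transformations have locality $O(1)$, yielding the claimed equivalence between $\Pi$ and its normal-form counterpart $\Pi'$ by constant-round local reductions in both directions.
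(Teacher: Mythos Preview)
Your argument is correct and is exactly the standard construction; the paper does not supply its own proof of this fact but simply cites it from \cite{balliu2019LCLs_on_paths_decidable}, whose Lemma~2 carries out essentially the same encoding you describe (vertices output their claimed labeled $r$-neighborhood, and the edge predicate checks overlap consistency plus validity under $\fP$).
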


Hence, from now on in this section we consider only LCLs in the normal form.

\subsection{Blocks and Mixing}

In this subsection we introduce several definitions, ultimately yielding the combinatorial condition that characterizes when $\Pi$ is in $\borel$.
The proof of this fact is presented in subsequent subsections.
Recall that we assume that $\Pi$ is as in \cref{def:normal_form_line}.

%We will present a combinatorial property of $\Pi$ that determines whether there is borel/baire/measure solution.

\begin{figure}
    \centering
    \includegraphics[width = .9\textwidth]{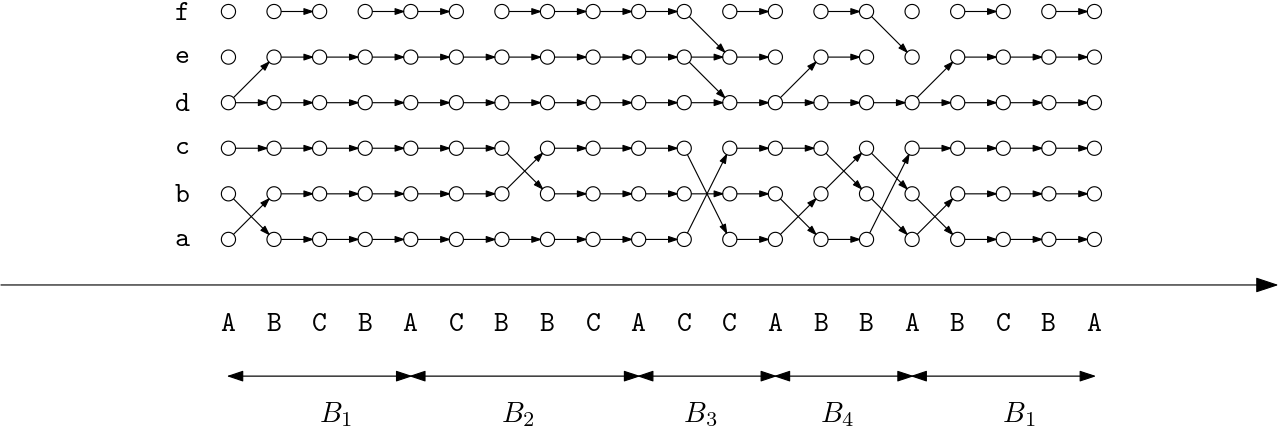}
    \caption{The input seen as a sequence of $\tA$-blocks. In this particular example, we have $\type(B_1) = \type(\tA\tB\tC\tB\tA) = \type(\tA\tB\tC\tB(\tC\tB)^*\tA)$, that is, pumping up the original block with the string $\tC\tB$ does not change the block's type. }
    \label{fig:lines_blocks}
\end{figure}

\begin{definition}[Input graph]
Let $\fI \in \Sigma_{in}^{\mathbb{Z}}$ be an instance of input labeling.
We define an oriented graph $G_{\fI}$, where $V(G_\fI)=\mathbb{Z}\times \Sigma_{out}$ and $((i,\ta),(i+1,\tb))$ forms an oriented edge if $\fP((\fI(i), \fI(i+1)), (\ta, \tb)) = true$.
\end{definition}

The $i$-th layer of the graph $G_{\fI}$, that is, vertices of the form $(i, \tau)$ is denoted as the \emph{slice} $G[i]$. 
The edges between consecutive layers may also be seen as a relation parametrized by the respective two input letters. 
It is easy to see that any doubly infinite orinted path in $\Sigma_\fI$ corresponds to a solution of $\Pi$ and vice versa.

\begin{definition}[$\sigma$-block]
Let $\sigma\in \Sigma_{in}$.
A \emph{$\sigma$-block} $B$ of length $t$ is a sequence $\sigma = \sigma_1, \sigma_2, \dots, \sigma_{t} = \sigma$ with each $\sigma_i \in \Sigma_{in}$. 
\end{definition}

A \emph{block} is a $\sigma$-block for some $\sigma$.
Equivalently, we may see a block as a finite cut in the input graph $G_\fI$ that starts and ends with the same input label, i.e., it determines the input graph on $B$.
For the next definitions we tacitly assume that a given block $B$ of length $t$ is part of some fixed input labeling $\fI\in \Sigma_{in}^\mathbb{Z}$ such that the first letter is at position $0$ and the last at position $t$.

\begin{definition}[Type of a block]%\todo{maybe should be named relation}
The \emph{type} of a block $B$, denoted as $\type(B)$, is the relation $\type(B) \subseteq \Sigma_{out}^2$ such that $(\ta, \tb) \in \type(B)$ for $\ta, \tb \in \Sigma_{out}$ if there is a path in ($G_\fI$ restricted to) $B$ connecting $(0, \ta)$ with $(t, \tb)$.
\end{definition}

A \emph{subpartition (of $\Sigma_{out}$)} is a pair $(\Upsilon,\nabla)$, where $\nabla$ is an equivalence relation defined on some subset $\dom(\nabla)$ of $\Sigma_{out}$ and $\Upsilon=(\mathbb{P},\preceq)$ is a poset where $\mathbb{P}$ is an equivalence relation with $\dom(\mathbb{P})=\Sigma_{out}$ that is coarser than $\nabla$ on $\dom(\nabla)$.
We write $[\ta]_\nabla$ for the equivalence class of $\ta\in \dom(\nabla)$ and $\mathbb{P}(\ta)$ for the equivalence class of $\ta\in \dom(\mathbb{P})$.

\begin{definition}[A permutation block on a subpartition]\label{def:subpartition}
Let $B$ be a block and $(\Upsilon,\nabla)$ a subpartition.
We say that $B$ is a \emph{permutation block on the subpartition $(\Upsilon,\nabla)$} if there is a permutation $\pi_B$ of $\nabla$ such that
\begin{enumerate}
    \item $\pi_B$ is $\mathbb{P}$-invariant, that is, if $\pi_B([\ta]_\nabla)=[\tb]_\nabla$, then $\mathbb{P}(\ta)=\mathbb{P}(\tb)$,
    \item $(\ta, \tb) \in \type(B)$ implies $\mathbb{P}(\ta) \preceq \mathbb{P}(\tb)$,
    \item $\ta\not\in \dom(\nabla)$ and $(\ta, \tb) \in \type(B)$ implies $\mathbb{P}(\ta)\not=\mathbb{P}(\tb)$,
    \item $\ta,\tb\in \dom(\nabla)$, $(\ta, \tb) \in \type(B)$ and $\mathbb{P}(\ta)=\mathbb{P}(\tb)$ implies $\pi_B([\ta]_\nabla)=[\tb]_\nabla$.
\end{enumerate}
holds for every $\ta,\tb\in \Sigma_{out}$.
\end{definition}

\begin{definition}[Permutation group on $(\Upsilon,\nabla)$]
Let $\sigma\in \Sigma_{in}$ and $(\Upsilon,\nabla)$ be a subpartition.
We set $\Gamma_{\sigma, (\Upsilon,\nabla)}$ to be the set of all permutations $\pi_B$ that are induced by permutation $\sigma$-blocks $B$ on the subpartition $(\Upsilon,\nabla)$.
\end{definition}

Observe that permutation $\sigma$-blocks on a subpartition $(\Upsilon,\nabla)$ are closed under concatenation.
Moreover, the operation concatenation of blocks turns $\Gamma_{\sigma, (\Upsilon,\nabla)}$ naturally into a semigroup that acts on $\nabla$.
Since the action is by permutations and $\nabla$ is finite it must be the case that $\Gamma_{\sigma, (\Upsilon,\nabla)}$ is actually a group.

Finally, we are able to define the combinatorial condition of being \emph{mixing} that characterizes hardness of a given problem.

\begin{definition}[Mixing]
Let $\Pi$ be LCL, $\sigma\in \Sigma_{in}$ and $(\Upsilon,\nabla)$ be a subpartition of $\Sigma_{out}$.
We say that $\Pi$ is \emph{$(\sigma, \Upsilon,\nabla)$-mixing} if the canonical action of $\Gamma_{\sigma, (\Upsilon,\nabla)}$ on $\nabla$ does not have a fixed point.
We say that $\Pi$ is \emph{mixing} if there are $\sigma$ and $(\Upsilon,\nabla)$ such that $\Pi$ is $(\sigma,\Upsilon,\nabla)$-mixing.
\end{definition}

Note that every block $B$ is a permutation block on the subpartition $(\Upsilon,\nabla)$, where $\mathbb{P}=\nabla=\{\Sigma_{out}\}$.
On the other hand, there is a permutation block on the subpartition $(\Upsilon,\nabla)$, where $\mathbb{P}=\{\Sigma_{out}\}$ and $\nabla=\emptyset$, if and only if there is an input labeling $\fI\in \Sigma_{in}^\mathbb{Z}$ that does not admit a  $\Pi$-solution, i.e., $(\fI,f)$ is not a $\Pi$-coloring for every $f\in \Sigma_{out}^\mathbb{Z}$.
The proof of this fact is an easy application of K\" onig's Theorem.
In that case we consider, formally, $\Pi$ to be mixing.

\begin{comment}
Whenever there is an instance $\fI$ that does not admit any solution, then $\Pi$ is mixing from the following trivial reason:
Consider a label $\sigma\in \Sigma_{in}$ that appears infinitely often in the $+\infty$ direction and let $i\in \mathbb{Z}$ be such that $\fI(i)=\sigma$.
It follows from K\" onig's Theorem (recall that $\Sigma_{out}$ is finite) that there is $\ell>0$ such that $\fI(i+\ell)=\sigma$ and, when we write $B$ for the restriction of $G_\fI$ to $[i,\dots,i+\ell]$, that $\type(B)=\emptyset$.
Set $\nabla=\emptyset$ and $\mathbb{P}=\{\Sigma_{out}\}$.
It is easy to see that $B$ is a permutation $\sigma$-block on the subpartition $(\Upsilon,\nabla)$.
Consequently, $|\Gamma_{\sigma,(\Upsilon,\nabla)}|=1$ and the action $\Gamma_{\sigma,(\Upsilon,\nabla)}\curvearrowright \emptyset$ does not have a fix point.
\end{comment}

\begin{figure}
    \centering
    \includegraphics[width=.98\textwidth]{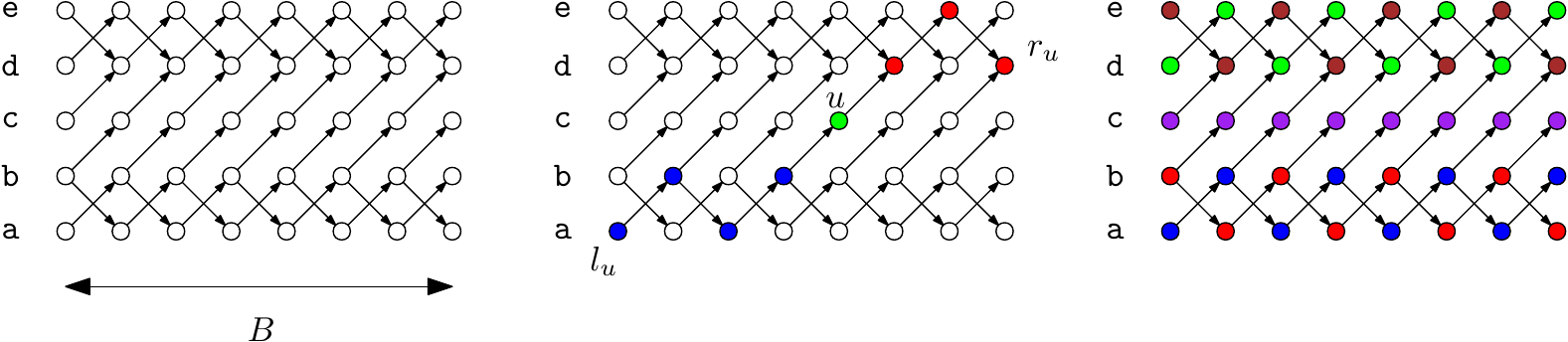}
    \caption{We illustrate several definition on an example graph that could be in fact generated even without input labels. \\
    Left: an example of a permutation block $B$ for $\nabla = \{ \{\ta\}, \{\tb\}, \{\td\}, \{\te\} \}$, $\mathbb{P} = \{ \{ \ta, \tb\}, \{ \td, \te\}, \{\tc\} \}$ and $\{ \ta, \tb\}\preceq \{\tc\} \preceq \{ \td, \te \} $, and $\type(B) = \{(\ta,\td), (\tb, \te),(\ta, \tc),(\tb, \tc),(\tc, \td),(\tc, \te)\}$. We have $\pi_B: \ta\rightarrow\tb, \tb\rightarrow\ta, \td\rightarrow\te, \te\rightarrow\td$. One can check that the block $B$ is a permutation block according to \cref{def:subpartition}. \\ Middle: definition of $r_{i,\tc}$ (red color) and $l_{i,\tc}$ (blue color), where $u=(i,\tc)$ (green color), from \cref{subsec:nonmixing_upper_bound}.  \\
    Right: the equivalence $\Re$ from \cref{subsec:nonmixing_upper_bound} has five classes in our example. }
    \label{fig:perm_block}
\end{figure}

\subsection{Non-Mixing implies $\borel$}
\label{subsec:nonmixing_upper_bound}

We show that if $\Pi$ is not mixing, then it is in the class $\borel$.
This immediately yields that $\Pi$ is in $\measure$, $\baire$ and $\fiid$ by the discussion in \cref{subsec:def_local}.
Recall that we assume that not mixing implies that every instance $\fI\in \Sigma_{in}^\mathbb{Z}$ admits a solution, i.e., there is a doubly infinite oriented path in the input graph $G_\fI$ for every $\fI\in \Sigma_{in}^\mathbb{Z}$.

The high-level strategy is to, first, use the power of Borel to compute the ``reachable'' relation $\Re$ on $G_\fI$, i.e., two nodes are equivalent if they can ``see'' the same infinite rays.
There are two types of equivalence classes, those that contain doubly infinite oriented path and those that do not.
For the sake of brevity lets assume that all equivalence classes are of the first type, we also skip several technical details that appear in the actual proof.
Define an order $\triangleleft$ on $\Re$, where $P\triangleleft P'$ if there is a path in $G_\fI$ from $P$ to $P'$.
It can be shown that $(\Re,\triangleleft)$ is a poset.
Every index can compute $(\Re,\triangleleft)$ in the Borel setting, however, it is not possible to distinguish equivalence classes that ``look the same''.
To overcome this problem we set $(\tilde{\mathbb{P}},\tilde{\preceq})$ to be a poset, where $\tilde{\mathbb{P}}=\Re/\operatorname{Aut}((\Re,\triangleleft))$ and $\tilde{\preceq}$ is induced by $\triangleleft$.
Given an index $i\in \mathbb{Z}$ we define $\nabla_i$ to be the intersection of $\Re$ and $G_\fI[i]$, $\mathbb{P}_i$ to be the intersection of $\tilde{\mathbb{P}}$ and $G_\fI[i]$ indexed by elements from $\tilde{\mathbb{P}}$, and $\Upsilon_i=(\mathbb{P}_i,\preceq_i)$ to be the poset structure on $\mathbb{P}_i$ induced by $\tilde{\preceq}_i$.
Finally, pick a (infinite) set of indices $C$ such that $\fI$ is equal to some $\sigma$ on $C$ and $(\Upsilon,\nabla)=(\Upsilon_i,\nabla_i)$ is the same for every $i\in C$.
The blocks induced by consecutive elements from $C$ are permutation $\sigma$-block on a subpartition $(\Upsilon,\nabla)$.
Since $\Pi$ is not mixing there is a fixed point of the action $\Gamma_{\sigma,(\Upsilon,\nabla)}$.
This allows to pick in a Borel fashion one equivalence class from the ``reachable relation''.
Once this is done it is easy to construct a doubly infinite path within this equivalence class.
Formal proof follows.

\newcommand{\GG}{G_\fI}

\begin{theorem}
\label{cl:paths_upper_bound}
Let $\Pi$ be LCL that is not mixing.
Then $\Pi \in \borel$. 
\end{theorem}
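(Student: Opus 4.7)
The plan is to construct, uniformly in the input labeling $\fI \in \Sigma_{in}^{\mathbb{Z}}$, a Borel $\Pi$-coloring. Since $\Pi$ is not mixing, every $\fI$ admits at least one $\Pi$-coloring (otherwise a block with $\nabla = \emptyset$ would be a mixing witness by K\"onig's lemma), so on every orbit the input graph $G_\fI$ contains a doubly infinite oriented path. The strategy is to compute a canonical combinatorial invariant of $G_\fI$ at each slice, pigeonhole to find a recurrent pattern, and then exploit the non-mixing hypothesis to canonically pick one path.

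First I would define, for each vertex $u = (i,\ta) \in V(G_\fI)$, the set $r_u$ of vertices $(j,\tb)$ with $j \geq i$ such that $G_\fI$ contains a directed path $u \to (j,\tb)$ extending to a $+\infty$-ray, and $l_u$ symmetrically. These are Borel in $\fI$ by standard K\"onig-type arguments, since $\Sigma_{out}$ is finite. Set $u \Re u'$ iff $u, u'$ lie in the same slice and $r_u = r_{u'}$, $l_u = l_{u'}$; after deleting vertices not on any doubly infinite path, $\Re$ has at most $2^{2|\Sigma_{out}|}$ classes per slice. Define a preorder $\triangleleft$ on $\Re$-classes by $P \triangleleft P'$ whenever some path in $G_\fI$ goes from a representative of $P$ to one of $P'$; the refinement of $\Re$ by $l_u, r_u$ forces antisymmetry, so $(\Re,\triangleleft)$ is a poset.

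Next, quotient by $\operatorname{Aut}((\Re,\triangleleft))$ to get the canonical poset $(\tilde{\mathbb{P}},\tilde{\preceq})$ on which ``same role'' classes are identified. For each index $i$ the restrictions of $\Re$ and $\tilde{\mathbb{P}}$ to the slice $G_\fI[i]$ yield a subpartition $(\Upsilon_i,\nabla_i)$ in the sense of \cref{def:subpartition}. Since there are only finitely many possibilities for the pair $(\fI(i),(\Upsilon_i,\nabla_i))$, a pigeonhole argument produces a bi-infinite set $C \subseteq \mathbb{Z}$ and a pair $(\sigma,(\Upsilon,\nabla))$ such that $\fI(i) = \sigma$ and $(\Upsilon_i,\nabla_i) = (\Upsilon,\nabla)$ for every $i \in C$; picking $C$ by a canonical rule (say, the sparsest maximal such set) keeps the choice Borel in $\fI$.

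For any two consecutive $i < i'$ in $C$, the fact that $(\Upsilon_i,\nabla_i)$ coincides at both endpoints forces the block $B_{i,i'}$ to be a permutation $\sigma$-block on $(\Upsilon,\nabla)$: conditions (2)--(4) of \cref{def:subpartition} are direct translations of the definitions of $r_u$, $l_u$, and $\triangleleft$, while condition (1) follows from having passed to the canonical quotient. Hence $\pi_{B_{i,i'}} \in \Gamma_{\sigma,(\Upsilon,\nabla)}$. Since $\Pi$ is not $(\sigma,\Upsilon,\nabla)$-mixing, some class $[\ta]_\nabla \in \nabla$ is fixed by the whole group $\Gamma_{\sigma,(\Upsilon,\nabla)}$; choose the lexicographically smallest such class with respect to a fixed enumeration of $\Sigma_{out}$. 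At each $i \in C$ this pins down a specific $\Re$-class $\fC_i \subseteq G_\fI[i]$, and consecutive classes $\fC_i, \fC_{i'}$ are connected by at least one path within $B_{i,i'}$; taking the lexicographically smallest such connection in each block produces a Borel doubly infinite $\Pi$-coloring, as required. The main obstacle is verifying that the canonical quotient $(\tilde{\mathbb{P}},\tilde{\preceq})$ behaves correctly under returns to the same subpartition, so that the induced blocks genuinely satisfy all four conditions of \cref{def:subpartition}; once this combinatorial backbone is in place, the non-mixing hypothesis and the canonical tie-breaking rules make the whole construction Borel essentially for free.
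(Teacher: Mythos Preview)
Your high-level strategy---build an equivalence $\Re$ on $G_\fI$, organize its classes into a poset, quotient by automorphisms, pigeonhole to a recurrent subpartition, and invoke non-mixing to pick a fixed class---is exactly the paper's. Two steps, however, are underspecified in ways that matter.

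First, your $\Re$ is defined \emph{per slice} (same slice and $r_u=r_{u'}$, $l_u=l_{u'}$), so $(\Re,\triangleleft)$ is an infinite poset whose automorphism quotient has no clear finite description. The paper instead defines $\Re$ \emph{globally}: $u\Re u'$ iff the symmetric differences $r_u\triangle r_{u'}$ and $l_u\triangle l_{u'}$ are finite (with $r_u$ the plain forward-reachability set). This yields at most $4^{|\Sigma_{out}|}$ classes, a finite poset to quotient, and---crucially---the property (\cref{cl:PathSubset}) that any directed path joining two $\Re$-equivalent vertices stays inside their class. That property is what makes $\pi_B$ well-defined: if $\ta,\ta'$ lie in the same global class $P\in\Re^{\leftrightarrow}$ and $(\ta,\tb),(\ta',\tb')\in\type(B)$ with $\mathbb{P}(\tb)=\mathbb{P}(\tb')$, then the paths force $\tb,\tb'\in P$ as well, so $[\tb]_\nabla=[\tb']_\nabla$. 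With your per-slice relation this implication is not available; two vertices with identical ray-reachable sets at slice $i$ can reach vertices at slice $i'$ lying in different per-slice classes, so condition~(4) of \cref{def:subpartition} may fail and $B_{i,i'}$ need not be a permutation block at all. You correctly flag this as the obstacle at the end of your sketch; the resolution is precisely to replace the per-slice relation by the global finite-symmetric-difference one.

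Second, once a fixed class $[\ta]_\nabla$ is chosen at every $i\in C$, you write ``taking the lexicographically smallest such connection in each block produces a Borel doubly infinite $\Pi$-coloring.'' This does not work as stated: $[\ta]_\nabla$ may contain several letters, and the lex-smallest path from the class at $i$ to the class at $i'$ need not terminate at the letter where the lex-smallest path from $i'$ to $i''$ begins, so the segments do not concatenate. The paper spends an extra step here: using \cref{cl:InfSupInLine} it finds, for each $i$, an anchor vertex $(i-s,\tb)$ inside the chosen intertwined path from which \emph{every} element of the class at slice $i$ is reachable, thins $C$ to be at least $s$-separated, and routes the doubly infinite path through these anchors (see \cref{fig:lines_gluing}).
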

\begin{proof}
Suppose that $(X,\fB)$ is a standard Borel space, $S$ is an aperiodic Borel automorphism and $\iota:X\to \Sigma_{in}$ is a Borel map.
By a result of Kechris et al. \cite{KST}, it is possible to find a sequence of Borel MISs in power graphs of $G_S$ with the power parameter going to infinity.
Then it is routine to verify that all the forthcoming constructions are Borel.
This is because they are local modulo having access to these MISs (even to all at once).

Fix $\fI\in \Sigma_{in}^{\mathbb{Z}}$ that appears in $G_S$ with labels given by $\iota:X\to \Sigma_{in}$, i.e., there is $x\in X$ such that its orbit is isomorphic to $\fI$.
We abuse the notation and will refer to $S^{i}(x)$ simply as to index $i$.
We warn the reader that even though it might look like we have picked $x$ as our reference point (which is certainly not possible in Borel setting) all the constructions are only, implicitly, using the fixed sequence of MISs.

We are now going to describe a sequence of operations with the graph $G_\fI$ that end up in picking a doubly oriented path in it in a Borel fashion.
Several times during the construction we run into a situation that allows us to pick a reference point, i.e., all elements $i\in \mathbb{Z}$ can agree on one index in a Borel way.
It is a standard fact that once this happen it is easy to find a doubly infinite path in a Borel way (recall that we assume that every instance $\fI$ admits at least one solution).

\paragraph{Splitting the input graph $\GG$}

We define an equivalence relation $\Re$ on $\GG$ with two types of equivalence classes.
An equivalence class of the first type, the type that corresponds to the $\nabla$ classes in the definition of a subpartition, is called \emph{intertwined path}, where $P$ is an intertwined path if it is a subset of $\GG$ that contains a doubly infinite oriented path and satisfies \cref{cl:InfSupInLine} below.
One should think of intertwined paths in $\GG$ as analogues of strongly connected components in directed graphs.
The second type of equivalence classes contains for example vertices that are not contained in any doubly infinite oriented path in $\GG$. These vertices are just technical artefacts that, unfortunately, cannot be avoided.

Let $(i,\ta)\in \GG$ and define $r_{i,\ta}$ to be the set of all nodes in $\GG$ to the right from $(i,\ta)$, i.e, nodes that are reachable from $(i,\ta)$ by a directed path to the right.
Similarly, define $l_{i,\ta}$ to be the set of nodes reachable from $(i,\ta)$ to the left.
Say that $(i,\ta)$ and $(i',\ta')$ are $\Re^+$ equivalent, if $r_{i,\ta}\triangle r_{i',\ta'}$ is finite.
It is easy to see that $\Re^+$ is an equivalence relation.
Similarly, define $\Re^-$ using $l_{i,\ta}$.

We claim that the number of $\Re^+$ and $\Re^-$ equivalence classes are bounded by $2^{|\Sigma_{out}|}$ and, hence, are finite.
To see this, suppose otherwise and pick representatives $(i_1,\ta_1),\dots, (i_t,\ta_t)\in \GG$, where $t = 2^{|\Sigma_{out}|} + 1$, of different $\Re^+$ equivalence classes.
Take an index $i\in\mathbb{Z}$ such that $i_k<i$ for every $k\in [t]$ and consider the sets $T_k=\GG[i] \cap r_{i_k,\ta_k}$ for every $k\in [t]$.
By the pigeonhole principle, there are $k_1 \not= k_2$ such that $T_{k_1}=T_{k_2}$.
This implies that $r_{i_{k_1},\ta_{k_1}}\triangle r_{i_{k_2},\ta_{k_2}}$ is finite and, hence, $(i_{k_1},\ta_{k_1})$ and $(i_{k_2},\ta_{k_2})$ are $\Re^+$ equivalent, a contradiction.
Similar argument works for $\Re^-$.

Define the equivalence relation $\Re$ as the intersection of $\Re^+$ and $\Re^-$.
Note that there are only finitely many $\Re$ equivalence classes.

\begin{claim}\label{cl:PathSubset}
Let $P\in \Re$ and $(i,\ta),(i',\ta')\in P$.
Suppose that there is a directed path $p\subseteq \GG$ from $(i,\ta)$ to $(i',\ta')$, in particular, $i\le i'$.
Then $p\subseteq P$.
\end{claim}
\begin{proof}
Let $(j,\tb)\in p$.
Then we have $r_{i',\ta'}\subseteq r_{j,\tb}\subseteq r_{i,\ta}$ and consequently $(j,\tb)$ is $\Re^+$ equivalent to, e.g., $(i,\ta)$.
The case with $\Re^-$ is the same.
\end{proof}

Suppose now that there is an equivalence class $P\in \Re$ that does not go to infinity in both directions, i.e., $P\cap \bigcup_{i\ge 0} \GG[i]$ or $P\cap \bigcup_{i\le 0} \GG[i]$ is finite.
If this happens then it is easy to see that this allows to pick a reference point, i.e., lexicographically minimal/maximal element in such a class (depending in what direction it does not go to infinite).
Therefore we may assume that each $P$ is infinite in both directions. This does not mean that it contains an infinite oriented path though. 
%We strengthen this observation in the following claim. 
%Similarly, suppose that there is $P\in \Re$ that contains

\begin{claim}\label{cl:InfSupInLine}
Let $P\in \Re$ contain an infinite oriented path (in any direction).
Then $P$ contains a doubly infinite oriented path and
$$(\bigcup_{j\ge i} G_\fI[j]\cap P)\setminus r_{i,\ta} \ \operatorname{and} \ (\bigcup_{j\le i} G_\fI[j]\cap P)\setminus l_{i,\ta}$$
are finite for every $(i,\ta)\in P$.
\end{claim}
\begin{proof}
{\bf (a)}.
Let $p\subseteq P$ be a directed infinite path, say, directed to $+\infty$ that starts at $(i,\ta)\in P$.
We show that for every $k\in \mathbb{N}$ there is an infinite directed path $p_k\subseteq P$ that is directed to $+\infty$ and that satisfies $p_k\cap \GG[-k]\not=\emptyset$.
Then a compactness argument implies that there is a doubly infinite oriented path in $P$ since $\Sigma_{out}$ is finite.

Let $k\in \mathbb{N}$ and $(j,\tb)\in P$ be such that $j\le -k$ (this exists by the assumption that elements of $\Re$ are infinite in both directions).
By the definition we have that $(i,\ta)$ and $(j,\tb)$ are $\Re^+$ equivalent.
Since $p\subseteq r_{i\ta}$ is infinite, there must be directed path $q$ that starts at $(j,\tb)$ and joins $p$.
Let $q'$ be a concatenation $q$ and $p$ from the point where they intersect.
Note that $q\cap \GG[-k]\not=\emptyset$ and $q'\subseteq P$ by \cref{cl:PathSubset}.
This proves first part of the claim.

{\bf (b)}.
Let $(i,\ta)\in P$ and suppose that $p\subseteq P$ is a doubly infinite directed path.
Suppose, e.g., that $(\bigcup_{j\le i} G_\fI[j]\cap P)\setminus l_{i,\ta}$ is infinite, the other case is analogous.
Let $\{(j_k,\tb_k)\}_{k\in \mathbb{N}}$ be a witness to this fact and suppose that $j_k<i$.
Since $(i,\ta)$ and $(j_k,\tb_k)$ are $\Re^+$ equivalent and $p\cap r_{i\ta}$ is infinite we have, as above, that there is a path $q_k$ directed to the right that starts at $(j_k,\tb_k)$ and joins $p$.
Let $(i,\ta_k)$ be the point where $q_k$ intersects $\GG[i]$.
Note that by \cref{cl:PathSubset} we have that $(i,\ta_k)\in P$.
Since $\GG[i]$ is finite, we may assume by the pigeonhole principle that $\ta_k=\ta'$.
We have $(i,\ta')\in P$, in particular, $(i,\ta)$ and $(i,\ta')$ are $\Re^-$ equivalent.
However, $\{j_k,\ta_k\}_{k\in \mathbb{N}}\subseteq l_{i,\ta'}$ and that is a contradiction with $\{j_k,\ta_k\}_{k\in \mathbb{N}}\subseteq (\bigcup_{j\le i} G_\fI[j]\cap P)\setminus l_{i,\ta}$.
The proof is finished.
\end{proof}

Define $\Re^{\leftrightarrow}$ to be the set of those equivalence classes from $\Re$ that contain doubly infinite oriented path.

\paragraph{Poset $\tilde{\Upsilon}=(\tilde{\mathbb{P}},\tilde{\preceq})$}

We show that there is a quasiorder structure $\triangleleft$ on $\Re$ that is a poset structure when restricted to $\Re^{\leftrightarrow}$.
Let $P,P'\in \Re$ and suppose that there is a directed path in $\GG$ from $P$ to $P'$.
If there are not infinitely many such paths in both directions of $\mathbb{Z}$, then we can pick the lexicographically minimal/maximal (depending on in what direction is the condition violated) such path (for all pairs of $P,P'\in \Re$ that violate this condition) and that would give a reference point picked in a Borel way.
Therefore we assume that this condition is satisfied.
Set $\triangleleft$ to be the transitive closure of the relation that relates $P,P'\in \Re$ if there is a directed path from $P$ to $P'$ in $\GG$.

\begin{claim}\label{cl:Poset}
Let $P\in \Re^{\leftrightarrow}$ and $Q\not =Q'\in \Re$ be such that $Q\triangleleft P\triangleleft Q'$.
Then $Q'\not\triangleleft Q$.
In particular, $\triangleleft$ is a poset when restricted to those $P\in \Re^{\leftrightarrow}$.
\end{claim}
\begin{proof}
Suppose that $Q'\triangleleft Q$.
Pick $(i',\ta')\in Q$ and a directed path $p\subseteq \GG$ that starts at $(i',\ta')$ and ends at $(i,\ta)\in P$.
By \cref{cl:InfSupInLine} we have that $r_{i,\ta}\cap P$ is infinite.
Consequently, $r_{i',\ta'}\cap P$ is infinite and therefore there is a directed path from every $(j,\tb)\in Q$ to $P$ in $\GG$.

Pick now any directed path $q$ from $Q'$ to $Q$.
This path can be extended to a path $q'$ that goes from $Q'$ to $P$ through $Q$ by the previous paragraph.
Let $(i,\ta)\in P$ be its endpoint.
By \cref{cl:InfSupInLine}, $r_{i,\ta}\cap P$ contains $P\cap \bigcup_{j\ge i'}\GG[j]$ for some $i\le i'$.
Let $p$ be any path that connects $P$ and $Q'$ and has a starting point at some element from $P\cap \bigcup_{j\ge i'}\GG[j]$.
This yields a path from $Q'$ to $Q'$ through $Q$ and $P$.
By \cref{cl:PathSubset} we get that $Q'=P=Q$.
\end{proof}

Let $\Re/\triangleleft$ be the factor given by the quasi order, i.e., $(\Re/\triangleleft, \triangleleft)$ is a poset.
We think of each class in $\Re/\triangleleft$ as the corresponding unions of the classes that are glued together by $\triangleleft$.
That is to say, $\Re/\triangleleft$ is an equivalence relation on $\GG$ that is coarser than $\Re$.
Note that by \cref{cl:Poset} we have that $\Re$ and $\Re/\triangleleft$ coincide on $\bigcup_{P\in \Re^{\leftrightarrow}}P$, in fact, every $P\in \Re^{\leftrightarrow}$ is not glued together with any other element from $\Re$.
Let $\operatorname{Aut}^{\leftrightarrow}(\Re/\triangleleft,\triangleleft)$ denotes the group of all automorphisms that preserves set-wise $\Re^{\leftrightarrow}$.
Define
$$\tilde{\mathbb{P}}=(\Re/\triangleleft)/\operatorname{Aut}^{\leftrightarrow}(\Re/\triangleleft,\triangleleft)$$
and let $\tilde{\preceq}$ be the relation induced by $\triangleleft$.
It is easy to see that $\tilde{\preceq}$ is a poset.
Set $\tilde{\Upsilon}=(\tilde{\mathbb{P}},\tilde{\preceq})$.
Note that by the definition we have that $\tilde{\mathbb{P}}$ is a super equivalence of $\Re/\triangleleft$ and, again, we think of $\tilde{\mathbb{P}}$ as an equivalence relation on $\GG$ that is coarser than $\Re/\triangleleft$.
Also note that if $P$ and $P'$ are contained in one $\tilde{\mathbb{P}}$ equivalence class, then either both $P,P'$ are from $\Re^{\leftrightarrow}$ or none.
Moreover, if $P\not=P'\in \Re^{\leftrightarrow}$ are $\tilde{\mathbb{P}}$ equivalent, then they are $\tilde{\preceq}$ incomparable.
In fact, this holds for every two $\Re/\triangleleft$ equivalence classes that are $\tilde{\mathbb{P}}$ equivalent because $\Re$ is finite.

\paragraph{Subpartition $(\Upsilon,\nabla)$}
Let $i\in \mathbb{Z}$.
We define $\nabla_i$ to be the subpartition induced by intersecting elements of $\Re^{\leftrightarrow}$ with the slice $G_\fI[i]$.
Similarly, we define $\mathbb{P}_i$ to be the super equivalence of $\nabla_i$ that is induced by $\tilde{\mathbb{P}}$ and $\preceq_i$ to be the poset structure induced by $\tilde{\preceq}$.
The important thing is that we are able to distinguish elements from $\tilde{\mathbb{P}}$, that is to say, each set in $\mathbb{P}_i$ can be index by the corresponding equivalence class from $\tilde{\mathbb{P}}$.
The situation for $\nabla_i$ is completely different, we are only able to say to what $\mathbb{P}_i$ class each $\nabla_i$ class belongs.

Let $\ta\in \Sigma_{out}\setminus \dom(\nabla_i)$.
By the definition of $\Re$ this means that every directed path that starts at $(i,\ta)$ eventually needs to visit some $P\in \Re^{\leftrightarrow}$.
By K\" onig's Theorem there is some $\ell_\ta\in \mathbb{N}$ such that this happen after at most $\ell_\ta$ steps (both to the left and right).
Set $\ell_i=\max_{\ta\not\in \dom(\nabla_i)}\ell_\ta$.

Let $(\sigma,\Upsilon,\nabla,\ell)$ be lexicographically minimal quadruple that appears in $G_\fI$ and let $C\subseteq \mathbb{Z}$ be those indices such that $(\fI(i),\Upsilon_i,\nabla_i,\ell_i)=(\sigma,\Upsilon,\nabla,\ell)$ for every $i\in C$.
By this we mean that $\nabla_i$ set-wise coincide with $\nabla$ and $\Upsilon_i$ index-wise correspond to $\Upsilon$.

In particular, $C\not=\emptyset$.
Note that if $C$ is not infinite in both directions, then we can pick a reference point in a Borel way, hence, solve the problem.
Assume that $C$ is infinite in both directions and let $C'\subseteq C$ be an $\ell$-separated subset.
Let $i<j\in C'$ be consecutive points and $B_{i,j}\subseteq \GG$ the block between them.

\begin{claim}
$B_{i,j}$ is a permutation $\sigma$-block on the subpartition $(\Upsilon,\nabla)$.
\end{claim}
\begin{proof}
Let $\ta,\tb\in \Sigma_{out}$ such that $(\ta,\tb)\in \type(B_{i,j})$.

{\bf (1.)}
Let $P\in \Re^{\leftrightarrow}$.
Define $\pi_{B_{i,j}}(P\cap \GG[i])=P\cap \GG[j]$.
It is easy to see that $\pi_{B_{i,j}}$ is $\mathbb{P}$-invariant because $\GG[i]\cap A=\GG[j]\cap A$ for every $A\in \widetilde{\mathbb{P}}$ by the definition of $\mathbb{P}$.

{\bf (2.)}
Fix some oriented path from $(i,\ta)$ to $(j,\tb)$.
It follows that $P\triangleleft Q$, where $(i,\ta)\in P$ and $(j,\tb)\in Q$.
This implies that $\mathbb{P}(\ta)\preceq \mathbb{P}(\tb)$ by the definition of $\Upsilon$.

{\bf (3.)}
Suppose that $\ta\not\in \dom(\nabla)$.
Fix some oriented path $p$ from $(i,\ta)$ to $(j,\tb)$.
By the definition we have that the length of $p$ is bigger than $\ell$, consequently, $p$ visits some $P\in \Re^{\leftrightarrow}$.
Since $(i,\ta)\not \in P'$ for every $P'\in \Re^{\leftrightarrow}$ we have that $\mathbb{P}(\ta)\not=\mathbb{P}(\tb)$.

{\bf (4.)}
Suppose that $\ta,\tb\in \dom(\nabla)$ and $\mathbb{P}(\ta)=\mathbb{P}(\tb)$.
Then we have $P\tilde{\preceq} Q$, where $(i,\ta)\in P$ and $(j,\tb)\in Q$.
However, since $P$ and $Q$ are $\tilde{\mathbb{P}}$ equivalent we must have $P=Q$ (non-equal elements are $\tilde{\preceq}$ incomparable).
\end{proof}

\paragraph{Picking an intertwined path}

Let $[\ta]_\nabla$ be the lexicographically minimal fix point of $\Gamma_{\sigma,(\Upsilon,\nabla)}$.
Let $P\in \Re^{\leftrightarrow}$ be the corresponding equivalence class in $\tilde{G}_{\fI}$.
Since $C'$ can be constructed in a Borel fashion and $[\ta]_{\nabla}$ is the same for every $i\in C'$ we have that $P$ can be chosen in a Borel fashion.

\paragraph{Solving the problem}
By previous paragraphs: we picked in a Borel way a set $C'\subseteq \mathbb{Z}$, $[\ta]_\nabla$ and equivalence class $P\in \Re^{\leftrightarrow}$, i.e., $P$ satisfies the conclusion of \cref{cl:InfSupInLine}.
%and $\tb\in [\ta]_\nabla$ whenever $(i,\tb)\in P$ and $i\in C'$.

Let $i\in C'$, then there is $(i-s_i,\tb_i)\in P$, where $s_i>0$, such that there is a directed path in $P$ from $(i-s_i,\tb_i)$ to every $(i,\tc)\in P$.
This is by iterative application of \cref{cl:InfSupInLine}.
\begin{figure}
    \centering
    \includegraphics[width=.95\textwidth]{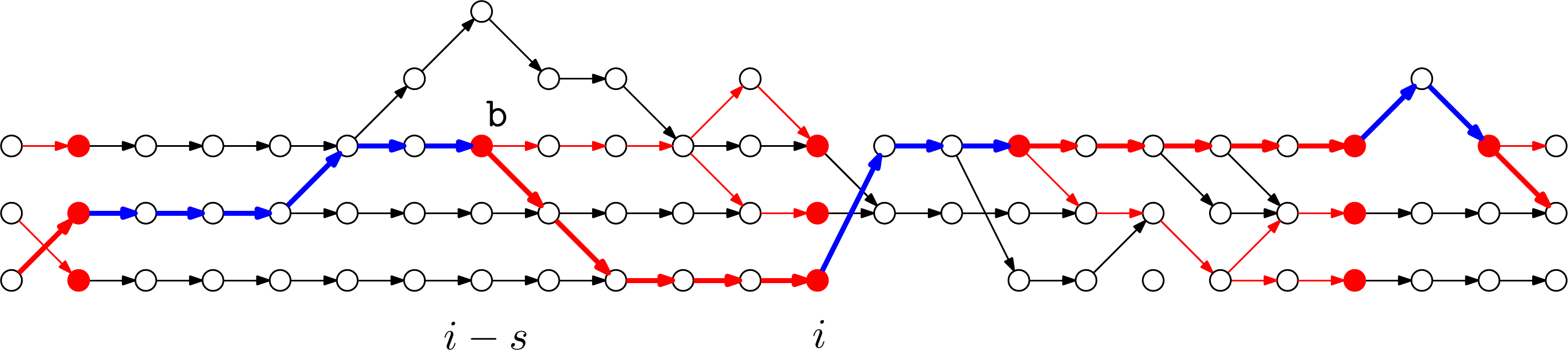}
    \caption{The figure shows how the final doubly infinite path is picked. For each $i \in C''$ we know that we can reach the letter $\tb$ at position $i-s$ (red arrows). We pick an arbitrary path from each $\tb$ at position $i-s$ to the left, until we reach another position from $C''$ (blue arrows). Then we pick the respective red path that connects to the previous $\tb$ letter. With this, every consecutive pair of $\tb$ letters is connected and the doubly infinite path constructed. }
    \label{fig:lines_gluing}
\end{figure}

Let $(s,\tb)$ be the lexicographically minimal pair that appears.
Again, if it does not appear infinitely many times in both directions we are done by picking a reference point.
Otherwise pass to a set $C''\subseteq C'$ that is at least $s$-separated.
This can be done in a Borel way.
Now pick any doubly infinite path that goes through $(i-s,\tb)$ for every $i\in C''$, see \cref{fig:lines_gluing}.
This finishes the proof.
\end{proof}

\subsection{Mixing implies global}

We show that if $\Pi$ is mixing, then it is not in $\measure$, $\baire$ nor $\fiid$.
This is achieved by translating $\Pi$ into a problem that is similar to a permutation problem, see \cref{def:PermProblem}.
Namely, consider all possible input labelings that consists of concatenating permutation $\sigma$-blocks on a subpartition $(\Upsilon,\nabla)$, where $\Pi$ is $(\sigma,\Upsilon,\nabla)$-mixing.
Let $\fI\in \Sigma_{in}^\mathbb{Z}$ be such a labeling and $f(\mathcal{I})\in \Sigma_{out}^\mathbb{Z}$ be a valid solution, i.e., $(\fI,f)$ is a $\Pi$-coloring.
Suppose moreover that $C\subseteq \mathbb{Z}$ is a set of indices that mark starting points of these blocks in $\fI$, this is not uniquely determined in general.
There are two possibilities (a) there is $\ta\in \Sigma_{out}$ such that $f(i)\in \mathbb{P}(\ta)$ for every $i\in C$, (b) $f$ is jumping between $\mathbb{P}$ classes.
Note that if (a) occurs, then $f(i)\in \dom(\nabla)$ for every $i\in C$ and once we know the value $[f(i')]_\nabla$ for one $i'\in C$, then all other values $[f(j)]_\nabla$ are fully determined for every $j\in C$ by (4) in \cref{def:subpartition}.
In the case (b) there are only finitely many indices in $C$ where $f$ can switch $\mathbb{P}$-class, by (2) in the \cref{def:subpartition}.
This allows to pick a reference point in a Borel way, i.e., the lexicographically minimal index where the switching occurs.
In another words, (b) can only happen on a ``small'' set of labelings.
On the other hand we show that property (a) is too strong for all the aforementioned models, on high-level, there need to be some correlation decay between decisions.

In the formal proof, we first define the space of mixing blocks, then endow this space with standard Borel structure, Borel probability measure and Polish topology, and finally show that there is no measurable solution in all these set-ups.

\paragraph{The space of mixing blocks}
Let $\Pi$ be an LCL and suppose that $\Pi$ is $(\sigma,\Upsilon,\nabla)$-mixing.
Pick finitely many $\sigma$-blocks $\{B_1,\dots,B_\ell\}$ on the subpartition $(\Upsilon,\nabla)$ such that the corresponding permutations $\pi_i:=\pi_{B_i}$ generate $\Gamma_{\sigma,(\Upsilon,\nabla)}$.

We define
$$\mathcal{K}\subseteq \Sigma_{in}^\mathbb{Z}\times (2^\mathbb{N})^\mathbb{Z}\times ([\ell]\times \{0,1\})^\mathbb{Z}$$
to be the space of oriented lines with nodes labeled by elements from $\Sigma_{in}$ and $2^{\mathbb{N}}$, together with the additional property that the $\Sigma_{in}$-labeling form a chain of blocks from $\{B_1,\dots,B_\ell\}$ and every node is marked with an element from $[\ell]\times \{0,1\}$ according to what block it belongs to, $[\ell]$, and whether it is a starting point of that block, $\{0,1\}$.
We make the convention that if $b(i)=(k,1)$, then $k$ refers to the block whose starting point $i$ is (in this case $i$ is also a last element of the previous block).
This convention induces a slight asymmetry in the statement of our main technical result \cref{thm:main_technical}.
Moreover, we assume that every element in $\mathcal{K}$ has no symmetry i.e., the canonical shift action is apperiodic.
To summarize:
If $(\fI,x,b)\in \mathcal{K}$ is given, then, for every $i\in \mathbb{Z}$, we have access to the $\Sigma_{in}$-label, $\fI(i)\in \Sigma_{in}$, the real label, $x(i)\in 2^{\mathbb{N}}$, and the index/position of/in the corresponding block $b(i)\in [\ell]\times \{0,1\}$.
In addition, $n\cdot (\fI,x,b)\not=(\fI,x,b)$ for every $n\in \mathbb{Z}\setminus \{0\}$ and it is easy to see that $\fK$ is invariant under the shift action.

The space $\mathcal{K}$ endowed with suitable measure or topology will serve as a witness to the fact that $\Pi$ is not in the class $\MEASURE$ nor $\BAIRE$.
From now on we switch to the language of shift actions, see \cref{subsec:shifts}.
Recall that $X_\Pi$ is the space of all $\Pi$-colorings.

\paragraph{Topology and Measure}

First, we endow $\mathcal{K}$ with a topological structure.
Since
$$\mathcal{K}\subseteq \Sigma_{in}^\mathbb{Z}\times (2^\mathbb{N})^\mathbb{Z}\times ([\ell]\times \{0,1\})^\mathbb{Z}$$
and the latter space carries a natural compact metrizable topology, we simply consider $\mathcal{K}$ as a topological subspace of it.
We write $(\fK,\tau)$ to stress the topological structure.
Recall that we assume that elements in $\mathcal{K}$ have no symmetries.

\begin{claim}
The space $(\mathcal{K},\tau)$ is a Polish topological space, that is, separable and completely metrizable.
The canonical shift action is an aperiodic Borel (continuous) automorphism and the map $i:\mathcal{K}\to \Sigma_{in}$, that is defined as $i(\fI,x,b)=\fI(0)$, is Borel (continuous).
\end{claim}

The $\sigma$-algebra of Borel sets generated by the topology $\tau$ turns $\fK$ into a standard Borel space.
We define a suitable Borel probability measure $\nu$ that turns $\fK$ into a standard probability space.

In fact, the measure $\nu$ is defined on $\Sigma_{in}^\mathbb{Z}\times (2^\mathbb{N})^\mathbb{Z}\times ([\ell]\times\{0,1\})^\mathbb{Z}$, then we show that it satisfies $\nu(\fK)=1$.
We consider the product measure of uniform measures, $\lambda$, on $(2^{\mathbb{N}})^\mathbb{Z}$.
Next we define a Borel probability measure $\mu$ on $\Sigma_{in}^\mathbb{Z}\times ([\ell]\times \{0,1\})^\mathbb{Z}$.
At the end we set $\nu=\lambda\times \mu$.

In order to construct the measure $\mu$, we define a Markov chain $\fM=(S,\fT)$.
Let $B_i$ be one of the permutation $\sigma$-blocks.
Formally $B_i=(\sigma,\sigma_2,\dots,\sigma_{\ell_i-1},\sigma)$ is a string of input labels, i.e., of elements from $\Sigma_{in}$, of length $|B_i|=\ell_i$.
Write $B'_i$ for a collection of $\alpha_{j,i}=(\sigma_j,j,i)$, where $j<\ell_i$, that is we consider $\Sigma_{in}$-labels that appear in $B_i$ together with the information about $B_i$ and their position, except for the last letter $\sigma$.
Set $S=\bigcup_{i\le \ell}B'_i$.

\begin{definition}
\label{def:chain}
Let $\mathcal{M}=(S,\fT)$ be a Markov chain with a state space $S$ and a transition relation $\fT$, see \cref{fig:chain}, that for every $i\in [\ell]$
\begin{enumerate}
    \item sends $\alpha_{j,i}$ to $\alpha_{j+1,i}$ with probability $1$ whenever $j<\ell_i-1$,
    \item sends $\alpha_{\ell_i-1,i}$ to $\alpha_{0,i'}$ with a uniform probability $\frac{1}{\ell}$ for every $i'\in [\ell]$.
\end{enumerate}
\end{definition}

\begin{figure}
    \centering
    \includegraphics[width=.6\textwidth]{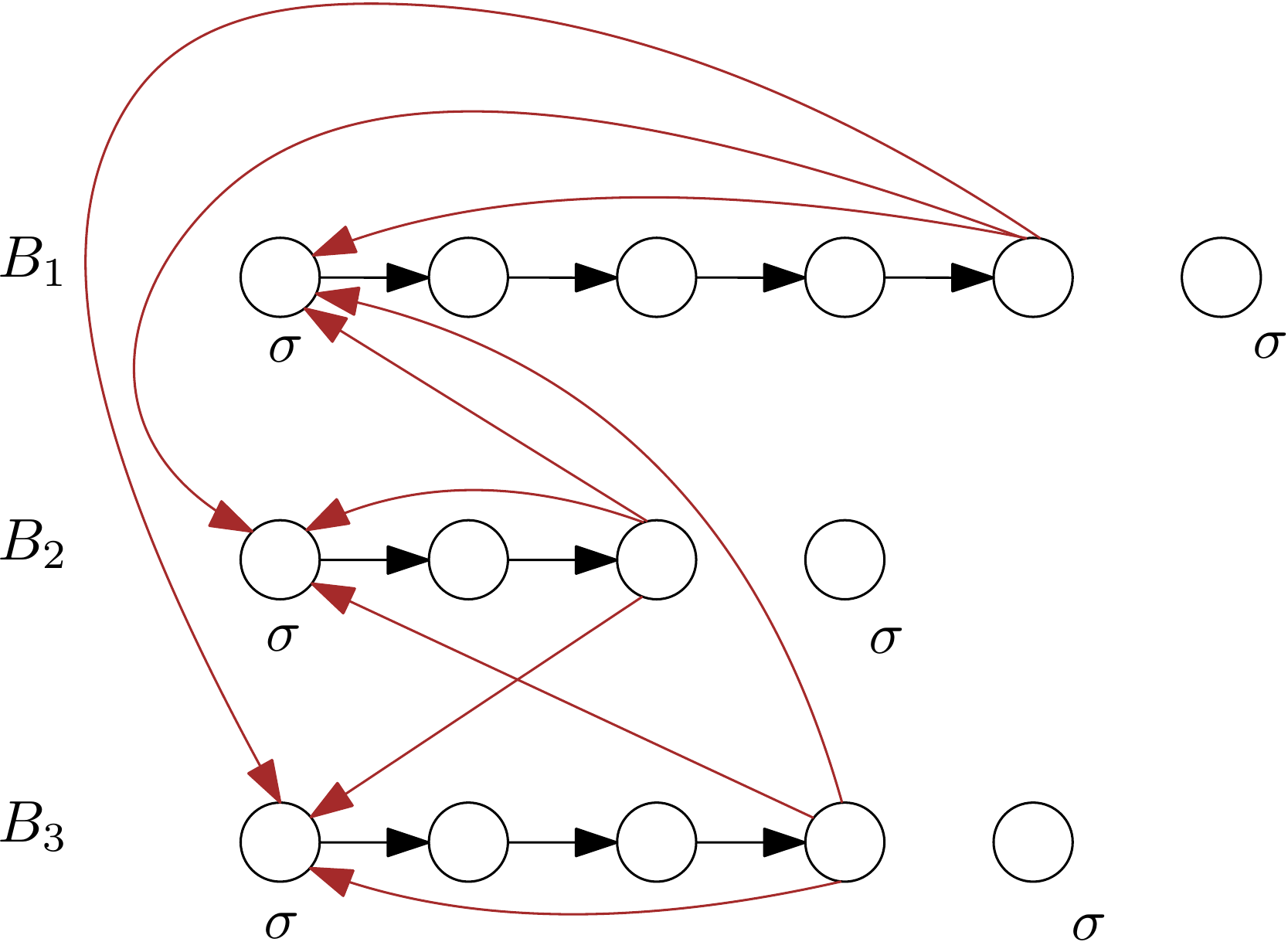}
    \caption{Illustration of the Markov chain from \cref{def:chain}. Brown arrows in the picture have transition probability $1/\ell = 1/3$, the black ones have transition probability $1$. }
    \label{fig:chain}
\end{figure}

Let $\mu'$ be a uniform distribution on $S$.
It is easy to see that $\mu'$ is stationary for $\fM$.
Let $\mu$ be the distribution on doubly infinite random walks given by $(\fM,\mu')$.

\begin{claim}
The Borel probability measure $\nu=\lambda\times \mu$ is concentrated on $\fK$.
The function $i:\fK\to \Sigma_{in}$ is measurable and the shift action $\mathbb{Z}\curvearrowright \fK$ is $\nu$-preserving.
\end{claim}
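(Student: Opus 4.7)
The plan is to verify the three assertions — $\nu(\fK)=1$, measurability of $i$, and shift-invariance of $\nu$ — essentially independently, since the measure is a product $\nu=\lambda\times\mu$ whose two factors handle different aspects.

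For concentration on $\fK$, I would decompose the definition of $\fK$ into three requirements: (i) the $\Sigma_{in}$-labeling $\fI$ is a bi-infinite concatenation of the blocks $B_1,\dots,B_\ell$; (ii) the marker $b$ correctly records the block index and flags starting positions; (iii) the triple has no nontrivial shift symmetry. Conditions (i) and (ii) are built directly into $\fM$: at state $\alpha_{j,i}$ the chain emits the letter $\sigma_j$ of $B_i$, advances deterministically within the block, and at the final position transitions uniformly to the start of a new block, with $b$ naturally read off from the state (using the convention that a transition out of $\alpha_{\ell_i-1,i}$ produces a flag $1$ at the target). So (i) and (ii) hold $\mu$-almost surely, hence $\nu$-almost surely. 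For (iii), rather than leaning on mixing of the chain $\fM$ itself (which could in principle fail for degenerate generating sets), I would let the Bernoulli factor do the work: under $\lambda$, since the uniform measure on $2^{\mathbb{N}}$ is atomless, the labels $x(i)$ are pairwise distinct almost surely, so no nonzero shift can fix $x$, and thus $(\fI,x,b)$ is aperiodic $\nu$-almost surely.

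Measurability of $i(\fI,x,b)=\fI(0)$ is immediate since it is a coordinate projection to a discrete set, hence continuous. For shift-invariance, I would split $\nu=\lambda\times\mu$ into its two factors. Shift-invariance of $\lambda$ is standard (product of iid copies of the uniform measure on $2^{\mathbb{N}}$). For $\mu$, the key step is to verify that the uniform measure $\mu'$ on $S$ is stationary for $\fM$: a one-line flow-balance computation suffices, since each intermediate state $\alpha_{j,i}$ with $j\ge 1$ receives mass $1/|S|$ from its unique predecessor, and each starting state $\alpha_{0,i}$ receives the sum $\ell\cdot \tfrac{1}{\ell}\cdot \tfrac{1}{|S|}=\tfrac{1}{|S|}$ from the $\ell$ end-of-block states. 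Once stationarity of $\mu'$ is in hand, $\mu$ is by construction the law of the two-sided stationary Markov chain and is therefore shift-invariant, and a product of shift-invariant measures is shift-invariant.

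There is no substantive obstacle here; the claim is a sanity check confirming that the probabilistic object $(\fK,\nu)$ really is a standard probability space with a measure-preserving shift action and a measurable input labeling, which is the setting where the subsequent non-existence arguments for $\measure$, $\baire$, and $\fiid$ will take place. The only point worth flagging is the need to engineer aperiodicity correctly; the cleanest route is to outsource it to $\lambda$, keeping the role of $\mu$ purely combinatorial.
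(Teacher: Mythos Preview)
Your proposal is correct and matches what the paper has in mind; in fact the paper states this claim without proof, merely noting beforehand that ``it is easy to see that $\mu'$ is stationary for $\fM$,'' so you are supplying exactly the routine verification the authors omit. Your choice to delegate aperiodicity to the Bernoulli factor $\lambda$ (via almost-sure distinctness of the $2^{\mathbb{N}}$ labels) rather than to properties of $\fM$ is the clean way to do it and is implicitly what the paper relies on as well.
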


\paragraph{Mixing implies global (modulo technical result)}

Let
$$f:\mathcal{K}\to \Sigma^{\mathbb{Z}}_{out}$$
be a Borel equivariant map.
Define $\fL\subseteq \fK$ to be the set of those $(\fI,x,b)$ such that $f$ switches $\mathbb{P}$ equivalence classes on $(\fI,x,b)$.
Formally, $(\fI,x,b)\in \fL$ if there are $i,j\in \mathbb{Z}$ that are starting indices of some blocks. i.e., $b(i)=({-},1), \ b(j)=({-},1)$, and $\mathbb{P}(f(\fI,x,b)(i))\not=\mathbb{P}(f(\fI,x,b)(j))$.
Define $\fC\subseteq \fK$ to be the set of those $(\fI,x,b)$ such that $f(\fI,x,b)$ is $\Pi$-coloring.
Note that both $\fL$ and $\fC$ are shift invariant Borel sets.

\begin{claim}\label{cl:one_P_class}
Suppose that $f$ gives a $\Pi$-solution $\nu$-almost everywhere (or on a $\tau$-comeager set).
That is $\nu(\fC)=1$ (or $\fC$ is $\tau$-comeager).
Then $\nu(\fL)=0$ (or $\fL$ is meager).
\end{claim}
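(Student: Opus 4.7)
The plan is to exploit property~(2) from \cref{def:subpartition}, which says that $(\ta,\tb)\in \type(B_k)$ forces $\mathbb{P}(\ta)\preceq \mathbb{P}(\tb)$. Fix $y=(\fI,x,b)\in \fC$ and enumerate the block-starting positions $\{i\in \mathbb{Z}:b(i)\in [\ell]\times\{1\}\}$ of $y$ as $\dots<i_{-1}<i_0<i_1<\dots$, so that for each $n$ the block $B_{k_n}$ spans $[i_n,i_{n+1}]$. Since $f(y)$ is a valid $\Pi$-coloring of $\fI$, we have $(f(y)(i_n),f(y)(i_{n+1}))\in \type(B_{k_n})$ for every $n\in\mathbb{Z}$, and property~(2) then forces
\[
\mathbb{P}(f(y)(i_n))\preceq \mathbb{P}(f(y)(i_{n+1})).
\]
Thus $n\mapsto \mathbb{P}(f(y)(i_n))$ is a non-decreasing sequence in the finite poset $(\mathbb{P},\preceq)$; it stabilizes in both directions and switches value at most $|\mathbb{P}|-1$ times. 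In particular the $\preceq$-minimum $P^\star(y):=\mathbb{P}(f(y)(i_n))$ for all sufficiently negative $n$ is well-defined and shift-invariant on $\fC$.

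Next I build a Borel transversal of the shift action on $\fL\cap \fC$. On this set the above sequence is non-constant, so
\[
s(y):=\min\bigl\{i\in\mathbb{Z}:\ b(i)\in [\ell]\times\{1\},\ \mathbb{P}(f(y)(i))\neq P^\star(y)\bigr\}
\]
is a well-defined integer. Both $P^\star$ and $s$ are Borel, since they are obtained by countably many operations in a finite poset from the Borel map $f$ and the Borel block-start predicate $b(\cdot)\in [\ell]\times \{1\}$. The equivariance of $f$ yields $s(n\cdot y)=s(y)-n$. Therefore $E:=s^{-1}(0)$ is a Borel set meeting every orbit inside $\fL\cap\fC$ in exactly one point, and $\fL\cap\fC=\bigsqcup_{n\in\mathbb{Z}} n\cdot E$.

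The measure case is then immediate: shift-invariance of $\nu$ gives $\nu(n\cdot E)=\nu(E)$ for every $n$, the disjoint union has total measure at most $\nu(\fK)=1$, so $\nu(E)=0$ and hence $\nu(\fL\cap\fC)=0$; combined with $\nu(\fC)=1$ this yields $\nu(\fL)=0$. The main obstacle is the Baire version, where no $\sigma$-additive invariant count is available. Here my plan is to replace measure-invariance by topological mixing of the shift on $(\fK,\tau)$: given two non-empty basic cylinders $U,V\subseteq \fK$, once $n$ exceeds the combined size of the windows on which $U$ and $V$ are based, one can concatenate arbitrary blocks from $\{B_1,\dots,B_\ell\}$ in the gap between the two specified windows to witness $(n\cdot U)\cap V\neq \emptyset$ (using that $B_1,\dots,B_\ell$ are all $\sigma$-blocks and hence freely concatenable). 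If $E$ were non-meager, the Baire property would give a non-empty open $U$ in which $E$ is comeager; then $n\cdot E$ would be comeager in $n\cdot U$, and mixing would produce $n$ for which $(n\cdot U)\cap U$ is a non-empty open set in which both $E$ and $n\cdot E$ are comeager, contradicting $E\cap n\cdot E=\emptyset$. Hence $E$ is meager, so is $\fL\cap\fC$, and together with comeagerness of $\fC$ we conclude that $\fL$ is meager.
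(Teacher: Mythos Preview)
Your proof is correct and follows the same approach as the paper: construct a Borel transversal $E$ (the paper's $\fX$) for the shift action on $\fL\cap\fC$ by using property~(2) of \cref{def:subpartition} to show the $\mathbb{P}$-class sequence along block-starts is monotone, then conclude it is null/meager. The paper invokes this last step as ``a standard fact''; you spell it out, which is helpful.

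One small remark on the Baire case: your topological mixing claim is slightly overstated. If $d:=\gcd_i(|B_i|-1)>1$, then block-start positions in any element of $\fK$ lie in a single residue class mod $d$, so $(n\cdot U)\cap U=\emptyset$ whenever $n\not\equiv 0\pmod d$; thus it is not true that $(n\cdot U)\cap V\neq\emptyset$ for \emph{all} $n$ beyond the combined window sizes. This does not affect your argument, since you only need \emph{some} $n\neq 0$ with $(n\cdot U)\cap U\neq\emptyset$, and that is easily obtained (e.g.\ take $n$ to be the distance between two block-starts bracketing $U$'s window and repeat the block pattern periodically, choosing the $(2^\mathbb{N})$-component to be aperiodic so the resulting point lies in $\fK$).
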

\begin{proof}
By the definition of $\fL$, $\fC$ and \cref{def:subpartition}, we define a Borel set $\fX\subseteq \fL\cap \fC$ that intersect every $\mathbb{Z}$-orbit of $\fL\cap \fC$ exactly in one point.
That is done as follows, let $(\fI,x,b)\in \fX$ if $(\fI,x,b)\in \fL$ and $b(0)=({-},1)$, i.e., $0$ is a left most point of a block, and $0$ is the left-most index when $\mathbb{P}$ equivalence class is changed.
Note that such an index exists since $f$ is a $\Pi$-solution on $\fL\cap \fC$ and by (2) in \cref{def:subpartition}.

By the assumption we have that $\fL\setminus \fC$ is $\nu$-null (or $\tau$-meager).
It remains to show that $\fX$ is such (since $\mathbb{Z}\cdot \fX=\fL\cap \fC$ and both notions are shift invariant).
It is a standard fact from descriptive combinatorics that since $\nu$ is $\mathbb{Z}$-invariant, we must have that $\fX$ is $\nu$-null (in the Baire case we use the fact that the shift action is $\tau$-continuous to deduce that $\fX$ is $\tau$-meager).
\end{proof}

From now on suppose that the assumption of $\cref{cl:one_P_class}$ is satisfied, i.e., $f$ is a $\Pi$-solution $\nu$-almost everywhere (or on a $\tau$-comeager set).
Write $\fS:=\fC\setminus \fL$, that is, $\fS$ is a Borel set on which $f$ solves $\Pi$ by always picking one $\mathbb{P}$ equivalence class and that satisfies $\nu(\fS)=1$ (or $\fS$ is $\tau$-comeager).

Define a Borel equivariant function 
$$\Lambda_f:\fS\to \nabla^\mathbb{Z}$$
as follows.
For $j\in \mathbb{Z}$, let $i\le j$ be first index such that $b(i)=(\_,1)$.
Set $\Lambda_f(\fI,x,b)(j)=[f(i)]_\nabla$.
Note that this is well-defined by \cref{def:subpartition} and our assumption on $\fS$.

We formulate separately the main technical result that is used in the proof of \cref{thm:main}.
The proof of this result is deferred to \cref{app:technical}.

\begin{restatable}{theorem}{maintechnical}\label{thm:main_technical}
Let $f:\fK\to \Sigma_{out}^\mathbb{Z}$ be a Borel equivariant map and $\fS\subseteq \fK$ be as above.
Then on $\nu$-conull subset of $\fS$ (or on a $\tau$-comeager subset of $\fS$) we have:
\begin{enumerate}
    \item the value $\Lambda_f(\fI,x,b)(0)$ depends only on $(\fI,x,b)\upharpoonright \{0,1,\dots\}$,
    \item the value $\Lambda_f(\fI,x,b)(0)$ depends only on $(\fI,x,b)\upharpoonright \{\dots -2,-1\}$.
\end{enumerate}
E.g. in the measure case, there is a $\nu$-conull set $C\subseteq \fS$ such that if $(\fI,x,b),(\fI',x',b')\in C$ agree on $\{0,1,\dots\}$, then $\Lambda_f(\fI,x,b)(0)=\Lambda_f(\fI',x',b')(0)$.
\end{restatable}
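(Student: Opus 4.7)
The plan is to establish the rigidity of $\Lambda_f(0)$ via two structural inputs: (i) on $\fS$, the function $\Lambda_f$ is piecewise constant on blocks and its jumps at block boundaries are governed by the block permutations from \cref{def:subpartition}(4); and (ii) the Markov/mixing structure of the measure $\nu$. Because the stationary distribution of $\fM$ is uniform and the time-reversal of $\fM$ is a Markov chain of the same combinatorial type (blocks traversed in reverse), conclusions (1) and (2) are equivalent under time reversal; I focus on (1).

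The first step is to observe that on $\fS$ the local block structure at position $0$ is future-measurable. From $b(0)=(k,\epsilon)$ we read off the current block index $k$, and the offset $o\in\{0,\dots,\ell_k-1\}$ of position $0$ within the block is recovered from the future alone by counting indices until the next block-start marker appears (which must lie in $\{1,\dots,\ell_k\}$). Hence the position $i_0=-o$ of the current block's start and the block permutation $\pi_{B_k}$ are both future-measurable, and the identity $\Lambda_f(0)=\pi_{B_k}^{-1}(\Lambda_f(\ell_k-o))$ reduces the problem to showing that $\Lambda_f(m)$, evaluated at some (any) future block-start position $m$, is future-measurable.

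Second, I would analyze the conditional distribution of $\Lambda_f(0)$ given $\mathcal{F}_+:=\sigma((\fI,x,b)\upharpoonright\{0,1,\dots\})$. By the Markov property applied at time $0$, the past is conditionally distributed as a reverse-chain law in the $(\fI,b)$-coordinate together with an iid uniform law in the $x$-coordinate, depending measurably only on $(\fI(0),b(0))$. Assume for contradiction that this conditional distribution is non-Dirac on a positive-measure event; on $\fS$ the two atoms $c_1\ne c_2\in\nabla$ in its support lie in a common $\mathbb{P}$-class. Exploiting the iid freedom in the past $x$-labels and the irreducibility of the reverse chain, one builds a Borel $\nu$-preserving transformation on the conditional past space realized by ``prepending'' a finite concatenation of generating blocks $B_{i_1},\dots,B_{i_k}$ and resampling the corresponding $x$-coordinates. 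Shift-equivariance of $f$ together with the piecewise-constant structure of $\Lambda_f$ and \cref{def:subpartition}(4) force the induced change in $\Lambda_f(0)$ to be precisely the action of $\pi_{B_{i_k}}\circ\cdots\circ\pi_{B_{i_1}}\in\Gamma_{\sigma,(\Upsilon,\nabla)}$. Consequently the conditional distribution of $\Lambda_f(0)$ given $\mathcal{F}_+$ is invariant under the entire group $\Gamma_{\sigma,(\Upsilon,\nabla)}$; combining this with the ergodicity of the shift on $(\fK,\nu)$ and with the determinism of $f$ on a given configuration forces the conditional law to be Dirac, yielding the contradiction.

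The main obstacle is rigorously constructing the past-modification operation and verifying that it transports the conditional law to itself while acting on the $\Lambda_f(0)$ value by a prescribed element of $\Gamma_{\sigma,(\Upsilon,\nabla)}$. One must exploit the uniform stationary distribution of $\fM$ (so that inserting any generating block yields a legitimate sample from the reverse chain started at the appropriate state) and the independence of the $x$-labels from $(\fI,b)$, combined with careful disintegration over $\mathcal{F}_+$. The Baire version is handled by parallel arguments: replace ``conull/positive measure'' by ``comeager/non-meager'', Fubini by the Kuratowski--Ulam theorem, and use continuity of the shift action on $(\fK,\tau)$ together with the openness of cylinder sets determining block boundaries.
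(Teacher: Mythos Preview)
Your reduction in the first paragraph is correct and is exactly the piece of block structure the paper uses. The gap is in your second step. You assert that a past-modification transformation (prepend blocks $B_{i_1},\dots,B_{i_k}$, resample the corresponding $x$-labels) preserves the conditional past law and induces a \emph{pointwise} change in $\Lambda_f(0)$ by $\pi_{B_{i_k}}\circ\cdots\circ\pi_{B_{i_1}}$. But condition (4) in \cref{def:subpartition} only relates $\Lambda_f$ at different \emph{positions of a single configuration} on $\fS$; it gives no control over $\Lambda_f(0)$ across two \emph{different} configurations that happen to share the same future. Since $f$ is an arbitrary Borel equivariant map, modifying the past can change $f(\cdot)(0)$ in an a~priori uncontrolled way; if instead you combine prepending with an actual shift so as to invoke equivariance, the new point no longer has the same $\mathcal{F}_+$-future and so tells you nothing about the conditional law you want. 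You flag this construction as the main obstacle, and indeed it is: I do not see how to carry it out. Even granting $\Gamma$-invariance of the conditional law, your closing step (``ergodicity plus determinism forces Dirac'') is unexplained; a $\Gamma$-invariant probability on $\nabla$ need not be a point mass, and the paper stresses that this theorem does \emph{not} use the mixing hypothesis, so the group $\Gamma$ should play no role here.

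The paper's argument avoids any past-modification and does not mention $\Gamma$. In the measure case it sets $\lambda:=\Lambda_f(\cdot)(0)$ (viewed as a Dirac measure on $\nabla$), forms the martingales $\lambda_t=\E(\lambda\mid\fB_t)$ over two-sided $t$-neighborhoods and $\lambda_t^+=\E(\lambda\mid\fB_t^+)$ over the same neighborhoods shifted $t$ to the right, and applies Doob's martingale convergence theorem. Since $\lambda_t\to\lambda$ a.s., for each $\epsilon>0$ there is $t$ and a set $A_\epsilon$ of measure $>1-\epsilon$ on which $\lambda_t$ is $\epsilon$-close to a Dirac measure. The single use of the block structure is exactly your first reduction: equivariance plus the full dependence between $\Lambda_f(0)$ and $\Lambda_f(t)$ on $\fS$ transfers this to $\lambda_t^+$ on $(-t)\cdot A_\epsilon$. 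Hence $\lambda^+=\lim_t\lambda_t^+$ is Dirac a.s., and since $\lambda^+=\E(\lambda\mid\fB^+)$ with $\lambda$ itself Dirac-valued, $\lambda=\lambda^+$ a.s. The Baire case is even simpler: take a comeager set on which $\lambda$ is continuous, fix one determining cylinder, and show that its right-shifts form an open dense set; Kuratowski--Ulam is not needed.
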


We remark that this statement does not use the assumption that $\Pi$ is mixing, i.e., this holds for any LCL $\Pi$ and any permutation $\sigma$-blocks $\{B_1,\dots,B_\ell\}$.

Recall, however, that we assume that $\Pi$ is mixing.
We show how \cref{thm:main_technical} implies that there is no Borel equivariant solution of $\Pi$ on $\nu$-conull (or $\tau$-comeager) set.
This immediately yields that $\Pi\not\in \baire,\measure$.
Moreover, once we have this consider a map $\gamma:\fK\to \Sigma_{in}^\mathbb{Z}\times (2^{\mathbb{N}})^\mathbb{Z}$ that forgets the third coordinate.
This map is clearly Borel.
It follows that if $\Pi$ were in the class $\FIID$, then composing any Borel equivariant map that witnesses that $\Pi\in \fiid$ with $\gamma$ would yield a Borel equivariant map on $\fK$ that solves $\Pi$ on $\nu$-conull set by Fubini's Theorem.
Consequently, $\Pi\not\in \FIID$.

%Note that we assume that $\Pi$ is mixing.
%This shows that $\Pi\not\in \baire,\measure$.

We only show the measure case, as the Baire case is analogous.
Assume that there is an equivariant Borel map $f$ and $\fS$ as above.
Observe that the assumption that $\Pi$ is mixing together with the fact that $\nu$ is $\mathbb{Z}$-invariant imply that there are $\ta,\tb\in \Sigma_{out}$ such that $[\ta]_\nabla\not=[\tb]_\nabla$ and $\nu(\Lambda_f^{-1}([\ta]_\nabla)),\nu(\Lambda_f^{-1}([\tb]_\nabla))>0$.
Let $A_\ta:=\Lambda_f^{-1}([\ta]_\nabla)$ and $C\subseteq \fS$ be the $\nu$-conull set from \cref{thm:main_technical}.
It follows that $\nu(A_\ta\cap C)>0$.
Since $\Lambda_f$ is invariant under modification of values of $\{\dots,-2,-1\}$ and $\{0,1,\dots\}$ in $C$, we must have $\nu(A_\ta)=1$ by Fubini's Theorem.
That contradicts the fact that $\nu(\Lambda_f^{-1}([\tb]_\nabla))>0$.

\begin{remark}
To find a $\Pi$-coloring for a given input labeling $\fI\in \Sigma_{in}^\mathbb{Z}$ is, up to a small technical nuance, equivalent with picking in a Borel way one of the $\Re^{\leftrightarrow}$ equivalence classes (these are defined in the proof of \cref{cl:paths_upper_bound}).
Abstractly, this can be phrased as follows: let $(X,\fB)$ be a standard Borel space and $E,F$ be (countable) Borel equivalence relations such that $F$ has finite (uniformly bounded) index in $E$. Under what circumstances there is a Borel set $B\subseteq X$ that intersect each $E$ equivalence class in exactly one $F$ equivalence class?
This was problem was studied in different, more abstract, perspective by de Rancourt and Miller \cite{MillerNoeDichotomy}.
\end{remark}

\subsection{Decidability}

It remains to check that the combinatorial property of $\Pi$ being $(\sigma,\Upsilon,\nabla)$-mixing is decidable. 
This is done similarly to the decidability result from \cite{balliu2019LCLs_on_paths_decidable}. 
In what follows, we assume that the problem $\Pi$ is given on the input in its normal form. Instead, it may be given in its general form (\cref{def:lcls_on_paths}) by listing all possible $r$-hop neighbourhoods and stating whether it is in $\fP$ for each one of them. The reduction to the normal form then runs in time polynomial in the length of the input. 

\begin{claim}
\label{cl:paths_decidability}
Verifying whether $\Pi$ is $(\sigma, \Upsilon, \nabla)$-mixing for some $\sigma$ and a subpartition $(\Upsilon, \nabla)$ is in $\pspace$. 
\end{claim}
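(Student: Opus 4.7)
The plan is to reduce the check to a reachability problem in an exponentially-large but polynomially-representable transition system, which by Savitch's theorem lies in PSPACE. First, I would observe that the number of triples $(\sigma, \Upsilon, \nabla)$ is singly exponential in $|\Sigma_{out}|$, but each triple has a polynomial-size description, so a PSPACE machine can iterate over all of them one at a time. The remaining task is, for a fixed triple, to decide mixing in polynomial space.

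For a fixed triple, the key reformulation is: $\Pi$ is $(\sigma, \Upsilon, \nabla)$-mixing iff for every class $C \in \nabla$ there exists a permutation $\sigma$-block $B$ on $(\Upsilon, \nabla)$ with $\pi_B(C) \neq C$. This follows since $\Gamma_{\sigma,(\Upsilon,\nabla)}$ is generated by such $\pi_B$'s, so $C$ is a fixed point of the whole group iff every generator fixes it. Thus it suffices, for each of the polynomially many $C \in \nabla$, to decide existentially over permutation blocks.

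Next, I would model the types of $\sigma$-blocks by a transition system whose states are pairs $(\tau, T)$ with $\tau \in \Sigma_{in}$ and $T \subseteq \Sigma_{out}^2$, where $T$ is the accumulated type of a block running from a starting $\sigma$-letter up to a current $\tau$-letter. The initial state is $(\sigma, \operatorname{Id}_{\Sigma_{out}})$, and the transitions are $(\tau, T) \to (\tau', T \circ R_{\tau,\tau'})$ for each $\tau' \in \Sigma_{in}$, where $R_{\tau,\tau'} = \{(\ta,\tb) : \fP((\tau,\tau'), (\ta,\tb)) = \texttt{true}\}$. The types realized by $\sigma$-blocks are exactly those $T$ for which a state $(\sigma, T)$ is reachable. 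Given $T$, conditions (1)--(4) of \cref{def:subpartition} determine whether $T$ is the type of a permutation block on $(\Upsilon, \nabla)$ and, if so, uniquely extract $\pi_B$; all of this is polynomial-time checkable. Hence the query ``is there a permutation $\sigma$-block $B$ with $\pi_B(C) \neq C$?'' becomes a reachability question with a polynomial-time-decidable target predicate on states.

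The state space has size $|\Sigma_{in}| \cdot 2^{|\Sigma_{out}|^2}$, exponential in the input, but each state has polynomial-size encoding and transitions are polynomial-time computable. By Savitch's theorem, reachability in such a transition system lies in NPSPACE $=$ PSPACE, and running this check for each $C$ and each candidate triple stays within polynomial space. The main obstacle is not the reachability argument itself, which is standard, but a careful bookkeeping to verify that conditions (1)--(4) of the subpartition definition really can be read off from $T$ alone in polynomial time, so that the target predicate is well-defined and efficiently decidable.
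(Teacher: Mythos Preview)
Your proposal is correct and follows essentially the same approach as the paper's proof. Both arguments iterate over all candidate triples $(\sigma,\Upsilon,\nabla)$, reduce the mixing check to ``for every $C\in\nabla$ there is a permutation $\sigma$-block whose $\pi_B$ moves $C$'', track the accumulated type $T\subseteq\Sigma_{out}^2$ of a growing block via a state $(\tau,T)$, and invoke Savitch's theorem to place the resulting exponential-state reachability problem in $\pspace$. The paper phrases the same idea as a pumping argument bounding block length by $|\Sigma_{in}|\cdot 2^{|\Sigma_{in}|\cdot|\Sigma_{out}|}$ followed by an $\mathsf{NPSPACE}$ certificate that is scanned left-to-right while maintaining the reachability relation---this is exactly your transition system, just described differently.

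One small point: you say conditions (1)--(4) of \cref{def:subpartition} let you ``uniquely extract $\pi_B$'' from $T$. In general $\pi_B$ is only existentially quantified in the definition and need not be unique (some $\nabla$-classes may be unconstrained by condition (4)). The correct target predicate is ``there exists a $\mathbb{P}$-invariant permutation $\pi$ of $\nabla$ satisfying (4) with $\pi(C)\neq C$'', which is still polynomial-time decidable (condition (4) forces some values, and completing to a permutation within each $\mathbb{P}$-class is a bipartite matching question). You flagged exactly this as the main bookkeeping obstacle, so you are aware of it; the paper's proof simply glosses over the same step.
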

\begin{proof}

Given the description of $\Pi$, it can be decided whether it is $(\sigma, \Upsilon, \nabla)$-mixing for some $\sigma$ and $(\Upsilon, \nabla)$ as follows. 
We iterate over all $\sigma \in \Sigma_{in}$ and subpartitions $(\Upsilon, \nabla)$. We need to generate all types of $\sigma$-blocks on a subpartition $(\Upsilon, \nabla)$ and check whether the corresponding group $\Gamma_{\sigma, (\Upsilon, \nabla)}$ has a fixed point. 
%As a warm-up, we first show how to check it with an algorithm that works in exponential space. 

Importantly, note that the set of all $\sigma$-blocks of a given type is potentially infinite but we claim that the set of all $\sigma$-blocks on a subpartition $(\Upsilon, \nabla)$ of a given type contains a block of length at most $|\Sigma_{in}| \cdot  2^{|\Sigma_{in}| \cdot |\Sigma_{out}|}$. 
To see it, consider any permutation $\sigma$-block $B = \sigma=\sigma_1\sigma_2\dots\sigma_\ell=\sigma$ and any of its prefixes $B_i = \sigma_1\dots\sigma_i$. 
We will say that two prefixes $B_i$ and $B_j$ are of the same \emph{reachability type} if $\sigma_i = \sigma_j$ and for every $\tau_1, \tau_2 \in \Sigma_{out}$ we have $(0,\tau_1) \sim (i,\tau_2)$ if and only if $(0,\tau_1) \sim (j,\tau_2)$, where $\sim$ denotes the fact that there is directed path connecting these nodes. 
If there are two prefixes $B_i$ and $B_j$ of the same reachability type for $i < j$, we can turn $B$ into $B' = \sigma_1\sigma_2\dots\sigma_i\sigma_{j+1}\dots\sigma_\ell$. This new $\sigma$-block $B'$ is of the same type as the original block $B$.  
Now note that while $|B| > |\Sigma_{in}| \cdot  2^{|\Sigma_{in}| \cdot |\Sigma_{out}|}$, we can find two different prefixes $B_i,B_j$ of the same reachability type by pigeonhole principle. Hence, for any block $B$, there exists a block of the same type with length at most $|\Sigma_{in}| \cdot  2^{|\Sigma_{in}| \cdot |\Sigma_{out}|}$. 

Hence, the $(\sigma,\Upsilon, \nabla)$-mixing property for particular $\sigma$ and $(\Upsilon, \nabla)$ can be checked by generating all permutation blocks of at most exponential length. This means that the existence of $\sigma$ and $(\Upsilon, \nabla)$ such that $\Pi$ has the $(\sigma,\Upsilon, \nabla)$-mixing property can also be decided in space exponential in $|\Pi| := |\Sigma_{out}| + |\Sigma_{in}|$ (this quantity is in polynomial relation to the length of the description of the problem when given in the normal form). We will now show how to improve this to space \emph{polynomial} in $|\Pi|$. 

To see that, note that by Savitch's theorem\cite{savitch1970relationships,arora_barak2009computational_complexity_modern_approach} it suffices to show that the problem is in $\mathsf{NPSPACE}$. 
That is, it suffices to check a certificate that $\Pi$ is $(\sigma, \Upsilon, \nabla)$-mixing. The certificate can be of exponential size, though it can be accessed from a tape that we can read only once in a left to right manner. 
The certificate we use to prove that $\Pi$ is $(\sigma,\Upsilon, \nabla)$-mixing for some $\sigma$ and $(\Upsilon, \nabla)$ is first $\sigma$ and $(\Upsilon, \nabla)$ and then a sequence of at most $|\Sigma_{out}|$ permutation $\sigma$-blocks on the subpartition $(\Upsilon, \nabla)$, each one proving that a different $\tau \in \nabla$ is not a fixed point of $\Gamma_{\sigma, (\Upsilon, \nabla)}$. 
Each block is represented by an exponentially sized string. Although this string does not fit into the polynomial memory, we can scan it from the nondeterministic tape and during the scanning for each pair of letters $\ta$ from the beginning slice of the block and $\tb$ from the last slice of the block, we can simultaneously compute whether $\ta$ and $\tb$ are connected. This is done by a simple dynamic program that for the current slice computes this connectivity property and extends this relation to the new slice whenever it arrives.
Hence, we can check that the block given in the input is a valid permutation $\sigma$-block on subpartition $(\Upsilon, \nabla)$ for which $\tau$ is not a fixed point. 
\end{proof}
It is not clear whether checking $\Pi \in \local(O(1))$ or $\Pi \in \local(O(\log^* n))$ is in $\pspace$ \cite{balliu2019LCLs_on_paths_decidable}. 

\begin{claim}
\label{cl:paths_hardness}
Verifying whether $\Pi$ is $(\sigma, \Upsilon,\nabla)$-mixing for some $\sigma$ and $(\Upsilon,\nabla)$ is $\pspace$-hard, if $\Pi$ is given in the input in its normal form.  
\end{claim}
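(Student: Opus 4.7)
The natural plan is to reduce from the $\pspace$-hardness result of Balliu et al.~\cite{balliu2019LCLs_on_paths_decidable}. Recall that their reduction takes an instance $\varphi$ of a $\pspace$-complete problem (e.g. $\mathsf{TQBF}$) and produces, in polynomial time, an LCL $\Pi_\varphi$ in normal form such that $\Pi_\varphi$ belongs to a ``low'' complexity class exactly when $\varphi$ is a YES-instance and is ``global'' otherwise. Combined with \cref{thm:main} and \cref{cor:main}, being non-mixing is equivalent to $\Pi \in \borel$, which in turn sits strictly below the global class~(D). So the plan is: use the very same reduction $\varphi\mapsto \Pi_\varphi$, and show that for the outputs of this reduction the mixing dichotomy lines up with the YES/NO answer to $\varphi$.

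Concretely, the first step is to invoke the reduction and observe what its global outputs actually look like. In \cite{balliu2019LCLs_on_paths_decidable} the hard NO-instances are constructed so that on suitably long periodic input labelings no valid $\Pi_\varphi$-coloring exists at all, or any coloring enforces a fully rigid ``counter''/marking behaviour forbidding any periodic fix point. In either case, by the convention stated right after \cref{def:subpartition} (an instance $\fI$ with no solution makes $\Pi$ formally mixing, via K\"onig's lemma and the subpartition $\mathbb{P}=\{\Sigma_{out}\}$, $\nabla=\emptyset$), and by the combinatorial characterization of marking/counting automata reviewed in \cref{sec:no_inputs}, every NO-instance $\Pi_\varphi$ is mixing. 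Conversely, in the YES-case Balliu et al.\ explicitly exhibit an $O(\log^* n)$ local algorithm for $\Pi_\varphi$, so $\Pi_\varphi \in \local(O(\log^* n)) \subseteq \borel$, and \cref{cl:paths_upper_bound} together with \cref{thm:main} forces $\Pi_\varphi$ to be non-mixing. Hence $\varphi \in \mathsf{TQBF}$ iff $\Pi_\varphi$ is non-mixing, and the many--one reduction is polynomial-time because the transformation from $\Pi_\varphi$ to its normal form is polynomial.

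The main obstacle is the verification step: one has to inspect the Balliu et al.\ construction in enough detail to certify that their global instances really correspond to mixing LCLs in our sense (i.e.\ that for some $(\sigma,\Upsilon,\nabla)$ one can exhibit a generating family of permutation $\sigma$-blocks whose group action on $\nabla$ has no fixed point), and not to some other source of globality. If that inspection turns out to be painful, the fallback plan is a direct reduction: given a $\mathsf{TQBF}$ (or $\pspace$-complete tiling) instance, build an LCL in which inputs encode alphabet/configuration data, outputs encode the bits of a purported accepting computation, and the local checking function $\fP$ enforces both the transition rules of the machine and a ``block closure'' constraint so that the only candidate permutation $\sigma$-blocks correspond to accepting runs. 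Then mixing reduces to the existence of an accepting run, which is $\pspace$-hard. Either way, the key point is that the expressive power of $\fP$ on normal-form instances is already enough to simulate polynomial-space computation, exactly as in \cite{balliu2019LCLs_on_paths_decidable}, so no new combinatorial gadgetry should be needed beyond re-interpreting their gadgets through the mixing lens.
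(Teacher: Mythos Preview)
Your plan has a genuine gap at its first step. You propose to reuse the Balliu et al.\ reduction unchanged and argue that its NO-instances are mixing. But the entire thrust of this paper is that $\local(\Omega(n))$ is \emph{strictly larger} than ``not in $\borel$'': there exist LCLs that are global for $\local$ yet lie in $\borel$ (the \blockcol example in \cref{subsec:intuition_about_main}). The original reduction in \cite{balliu2019LCLs_on_paths_decidable} achieves $\Omega(n)$ hardness via a ``secret bit'' given in the input near the start of the simulation that must be copied into every node's output. This is exactly the kind of ``propagate information an unbounded distance'' problem that a Borel algorithm \emph{can} solve: every node can inspect its entire connected component, locate the start marker, and read off the bit. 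So the NO-instances of the unmodified reduction are in $\borel$ and hence, by \cref{cl:paths_upper_bound}, \emph{not} mixing. Your appeal to the ``no solution exists, hence formally mixing via $\nabla=\emptyset$'' clause does not apply either, since those instances do admit solutions on every infinite input.

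The paper's proof therefore does not reuse the reduction verbatim; it \emph{modifies} it. The secret-bit gadget is replaced by a permutation-problem gadget (\cref{def:PermProblem}): between consecutive simulation blocks the input specifies either the identity or the swap permutation on $\{0,1\}$, and nodes must output a bit consistent with these permutations. When the automaton loops on some string $I$, one builds an input of superblocks of length $|I|$ joined by swaps at superblock boundaries and identities inside; any valid output then $2$-colors the contracted path of superblocks, which is not in $\borel$ and is mixing in the precise sense of \cref{def:subpartition}. Your ``fallback plan'' gestures toward a direct reduction but does not identify this permutation gadget, which is the actual content needed to push the hardness from $\local(\Omega(n))$ down to ``not in $\borel$''.
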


We provide only a proof sketch here, as the hardness result of \cite{balliu2019LCLs_on_paths_decidable} already almost applies to our setup. We only discuss what needs to be changed in their proof. 

\begin{proof}[Proof Sketch]
In \cite{balliu2019LCLs_on_paths_decidable} it is proven that distinguishing between the case $\Pi \in \local(O(1))$ and $\Pi \in \local(\Omega(n))$ is $\pspace$-hard. The problem $\Pi$ used there encodes a simulation of a linear bounded automaton as an LCL problem on an oriented path. There is a block of $O(B)$ nodes in the input of the LCL, each having some input, that simulates one step of the automaton on some input letter. If the automaton always accepts a string, the problem is in $\local(O(1))$, because the automaton has to accept after number of steps exponential in $B$, by the argument from \cref{cl:paths_decidability}. Hence, the local complexity is $O(1)$. If the automaton loops given some string, the LCL problem has $\Omega(n)$ complexity since we add the following feature to the encoding: Before the first step of the automaton, we give as input a ``secret'' bit. Then we enforce with local constraints that the same bit needs to be outputted at the beginning of each  given at the beginning of the execution needs to be a part of output of all nodes that encode the simulation problem. This means that if the automaton loops, there is an arbitrarily long input such that the solution of the LCL problem labels all nodes encoding the input with the secret bit. This problem has local complexity $\Omega(n)$. 

We now discuss how to adapt the above proof to our setup. Instead of making the problem hard by adding the ``secret'' bit, we make it hard by incorporating the permutation problem from \cref{def:PermProblem}. After each block encoding one step of the automaton, we add either the identity or the swap permutation as the input. All nodes of a block have to additionally output either $0$ or $1$, such that all nodes of a given block output the same bit and if neighboring blocks are joined with the identity permutation, they have to output the same bit, and if they are joined with the swap permutation, they need to output different bits. 
If the original automaton $\fA$ always finishes, this problem is still solvable in $O(1)$ local rounds. However, if $\fA$ loops on some input $I$, we can construct a hard instance as in \cref{fig:hard_input}. That is, the blocks are grouped in \emph{superblocks} of length $|I|$ that represent the input $I$. Moreover, neighboring blocks are joined by the identity permutation if they are from the same superblock and with the swap permutation otherwise. 
Any valid solution to the problem solves $2$-coloring on the path that we get by contracting superblocks. But this problem does not have a Borel solution and this finishes the proof. 

\begin{figure}
    \centering
    \includegraphics[width=.9\textwidth]{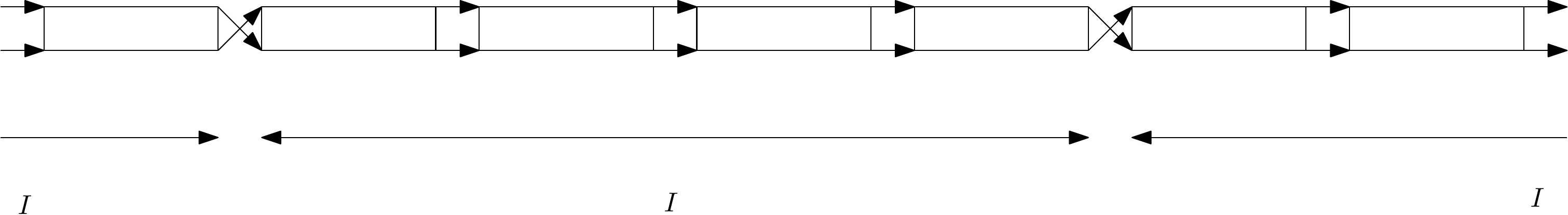}
    \caption{The input is a sequence of superblocks, each corresponding to the string $I$ that makes $\fA$ loop. }
    \label{fig:hard_input}
\end{figure}
\end{proof}

\section{Open Problems}\label{sec:Open_Problems_lines}

Here we list some open problems that we find interesting. 

\begin{enumerate}
    \item In case of paths with inputs, we have that all problems in the class $\borel$ can in fact be constructed by iterating MIS constructions an $\local(O(1))$ algorithms. Is true in general on bounded degree graphs? 
    
    \item Is $\borel = \measure = \fiid$ on any graph class of subexponential growth? 

    \item Is the classification of \cref{thm:main} $\pspace$-complete?
\end{enumerate}

\section*{Acknowledgments}
We thank to Anton Bernshteyn, Sebastian Brandt, Yi-Jun Chang, Mohsen Ghaffari, Stephen Jackson, Brandon Sewart, Jukka Suomela, Zoltán Vidnyászky for many engaging discussions. In particular, we thank Jukka for suggesting the proof of \cref{cl:paths_hardness}. 
The first author was supported by Leverhulme Research Project Grant RPG-2018-424. 
This project has received funding from the European Research Council (ERC) under the European
Unions Horizon 2020 research and innovation programme (grant agreement No. 853109). 

\bibliographystyle{alpha}
\bibliography{ref}

\newcommand{\etalchar}[1]{$^{#1}$}
\begin{thebibliography}{GHKM20}

\bibitem[AB09]{arora_barak2009computational_complexity_modern_approach}
Sanjeev Arora and Boaz Barak.
\newblock {\em Computational complexity: a modern approach}.
\newblock Cambridge University Press, 2009.

\bibitem[BBC{\etalchar{+}}19]{balliu2019LCLs_on_paths_decidable}
Alkida Balliu, Sebastian Brandt, Yi-Jun Chang, Dennis Olivetti, Mikaël Rabie,
  and Jukka Suomela.
\newblock The distributed complexity of locally checkable problems on paths is
  decidable, 2019.

\bibitem[BCG{\etalchar{+}}]{brandt_chang_grebik_grunau_rozhon_vidnyaszky2021LCLs_on_trees_descriptive}
Sebastian Brandt, Yi-jun Chang, Jan Grebik, Christoph Grunau, Vaclav Rozhon,
  and Zoltan Vidnyaszky.
\newblock Work in progress.

\bibitem[Bera]{Bernshteyn2021LLL}
A.~Bernshteyn.
\newblock Distributed algorithms, the lovász local lemma, and descriptive
  combinatorics.
\newblock {\em preprint}.

\bibitem[Berb]{Bernshteyn2021local=cont}
A.~Bernshteyn.
\newblock Probabilistic constructions in continuous combinatorics and a bridge
  to distributed algorithms.
\newblock {\em preprint}.

\bibitem[BGGR]{brandt_grebik_grunau_rozhon2021classification_of_lcls_trees_and_grids}
Sebastian Brandt, Jan Grebík, Christoph Grunau, and Václav Rozhoň.
\newblock The landscape of distributed complexities on trees and grids.

\bibitem[BHK{\etalchar{+}}17]{brandt_grids}
Sebastian Brandt, Juho Hirvonen, Janne~H. Korhonen, Tuomo Lempi\"{a}inen,
  Patric~R.J. \"{O}sterg\r{a}rd, Christopher Purcell, Joel Rybicki, Jukka
  Suomela, and Przemys\l{}aw Uzna\'{n}ski.
\newblock {L}{C}{L} problems on grids.
\newblock page 101–110, 2017.

\bibitem[CKP16]{chang2016exp_separation}
Y.-J. Chang, T.~Kopelowitz, and S.~Pettie.
\newblock An exponential separation between randomized and deterministic
  complexity in the {LOCAL} model.
\newblock In {\em Proc.\ 57th IEEE Symp.\ on Foundations of Computer Science
  (FOCS)}, 2016.

\bibitem[CMTD16]{BrooksMeas}
C.~Conley, A.~S. Marks, and R.~D. Tucker-Drob.
\newblock Brooks' theorem for measurable colorings.
\newblock {\em Forum Math. Sigma}, e16(4):23pp, 2016.

\bibitem[CP17]{chang2017time_hierarchy}
Yi-Jun Chang and Seth Pettie.
\newblock A time hierarchy theorem for the {LOCAL} model.
\newblock In {\em Proc.\ 58th IEEE Symp.\ on Foundations of Computer Science
  (FOCS)}, pages 156--167, 2017.

\bibitem[dRM]{MillerNoeDichotomy}
N.~de~Rancourt and B.~D. Miller.
\newblock The {F}eldman-{M}oore, {G}limm-{E}ffros, and {L}usin-{N}ovikov
  theorems over quotients.
\newblock {\em preprint}.

\bibitem[GGR20]{ghaffari_grunau_rozhon2020improved_network_decomposition}
Mohsen Ghaffari, Christoph Grunau, and Václav Rozhoň.
\newblock Improved deterministic network decomposition.
\newblock 2020.

\bibitem[GHK18]{ghaffari_harris_kuhn2018derandomizing}
Mohsen Ghaffari, David Harris, and Fabian Kuhn.
\newblock On derandomizing local distributed algorithms.
\newblock In {\em Proc.~Foundations of Computer Science (FOCS)}, pages
  662--673, 2018.

\bibitem[GHKM20]{ghaffari2020Delta_coloring}
Mohsen Ghaffari, Juho Hirvonen, Fabian Kuhn, and Yannic Maus.
\newblock Improved distributed $\delta$-coloring, 2020.

\bibitem[GJKS]{GJKS}
S.~Gao, S.~Jackson, E.~Krohne, and B.~Seward.
\newblock Continuous combinatorics of abelian group actions.
\newblock {\em https://arxiv.org/abs/1803.03872}.

\bibitem[GK18]{ghaffari2018congest-derandomizing}
Mohsen Ghaffari and Fabian Kuhn.
\newblock Derandomizing distributed algorithms with small messages: Spanners
  and dominating set.
\newblock In {\em 32nd International Symposium on Distributed Computing (DISC
  2018)}. Schloss Dagstuhl-Leibniz-Zentrum fuer Informatik, 2018.

\bibitem[GKM17]{ghaffari_kuhn_maus2017slocal}
Mohsen Ghaffari, Fabian Kuhn, and Yannic Maus.
\newblock On the complexity of local distributed graph problems.
\newblock In {\em Proc.\ 49th ACM Symp.\ on Theory of Computing (STOC)}, pages
  784--797, 2017.

\bibitem[GMP17]{OlegCircle}
L.~Grabowski, A.~Máthé, and O.~Pikhurko.
\newblock Measurable circle squaring., 2017.

\bibitem[Gol08]{goldreich2008computational_complexity_book}
Oded Goldreich.
\newblock {\em Computational complexity: a conceptual perspective}.
\newblock Cambridge University Press, 2008.

\bibitem[GR21]{grebik_rozhon2021toasts_and_tails}
Jan Grebík and Václav Rozhoň.
\newblock Of toasts and tails, 2021.

\bibitem[HSW17]{HolroydSchrammWilson2017FinitaryColoring}
A.~E. Holroyd, O.~Schramm, and D.~B. Wilson.
\newblock Finitary coloring.
\newblock {\em Ann. Probab.}, 45(5):2867--2898, 2017.

\bibitem[Kec95]{KecClassic}
A.~S. Kechris.
\newblock {\em Classical descriptive set theory}.
\newblock Springer-Verlag, New York., 1995.

\bibitem[KM]{kechris_marks2016descriptive_comb_survey}
Alexander~S Kechris and Andrew~S Marks.
\newblock Descriptive graph combinatorics.

\bibitem[KST99]{KST}
A.~S. Kechris, S.~Solecki, and S.~Todorcevic.
\newblock Borel chromatic numbers.
\newblock {\em Adv. Math.}, 141(1):1--44, 1999.

\bibitem[Lac90]{laczkovich}
M.~Laczkovich.
\newblock Equidecomposability and discrepancy; a solution of {T}arski's
  circle-squaring problem.
\newblock {\em J. Reine Angew. Math.}, 404:77--117, 1990.

\bibitem[Lin87]{linial1987LOCAL}
Nathan Linial.
\newblock Distributive graph algorithms -- global solutions from local data.
\newblock In {\em Proc.\ 28th IEEE Symp.\ on Foundations of Computer Science
  (FOCS)}, pages 331--335, 1987.

\bibitem[Mar16]{DetMarks}
A.~S. Marks.
\newblock A determinacy approach to borel combinatorics.
\newblock {\em J. Amer. Math. Soc.}, 29(2):579--600, 2016.

\bibitem[MNP]{JordanCircle}
A.~Máthé, J.~Noel, and O.~Pikhurko.
\newblock Circle squaring with {J}ordan measurable pieces.
\newblock {\em work in progress.}

\bibitem[MU17]{Circle}
Andrew~S. Marks and Spencer~T. Unger.
\newblock Borel circle squaring.
\newblock {\em Ann. of Math.}, 186(2):581--605, 2017.

\bibitem[Pik21]{pikhurko2021descriptive_comb_survey}
Oleg Pikhurko.
\newblock Borel combinatorics of locally finite graphs, 2021.

\bibitem[RG20]{RozhonG19}
Václav Rozho\v{n} and Mohsen Ghaffari.
\newblock Polylogarithmic-time deterministic network decomposition and
  distributed derandomization.
\newblock In {\em Proc.~Symposium on Theory of Computation (STOC)}, 2020.

\bibitem[Sav70]{savitch1970relationships}
Walter~J Savitch.
\newblock Relationships between nondeterministic and deterministic tape
  complexities.
\newblock {\em Journal of computer and system sciences}, 4(2):177--192, 1970.

\bibitem[Sew]{Seward_personal}
Brandon Seward.
\newblock Personal communication.

\bibitem[Spi20]{Spinka}
Y.~Spinka.
\newblock Finitely dependent processes are finitary.
\newblock {\em Ann. Probab.}, 48(4):2088--2117, 2020.

\end{thebibliography}

\appendix

\section{Proof of \cref{thm:main_technical}}\label{app:technical}

We recall the statement for the convenience of the reader.

\maintechnical*

The high-level strategy is to exploit the fact that every measurable function can be finitely approximated.
In the Baire case this is achieved by the fact that every such function is continuous on a comegear set, while in the measure case we use Doob's martingale convergence theorem.
We prove both cases separately and we start with the easier case, Baire.
Also we note that it is enough to show (1.) since the argument for (2.) is the same up to the small difference given by the definition of $b$ in $(\fI,x,b)\in \fK$, namely, $b$ assigns right-most points in a block to the next block on the right.
First we introduce some notation that we use in both cases.

Suppose that $f$, $\fS$ and $\Lambda_f$ are given.
Formally, we want $\Lambda_f$ to be defined everywhere, i.e., extend $\Lambda_f$ in a Borel equivariant fashion anyhow to $\fK\setminus \fS$.
We set $\Lambda:=\Lambda_f$ and let $\lambda$ be $\Lambda$ evaluated at $0$.
We denote the equivalence classes of $\nabla$ by $\tA,\tB,\dots$.
Let $\fB$ be the Borel $\sigma$-algebra on $\fK$.
Let $t\in \mathbb{N}$.
Write $\fB_t$ for the (finite) $\sigma$-algebra generated by $t$-neighborhoods, where a $t$-neighborhood of $(x,\fI,b)\in \fK$ is the collection of all $(x',\fI',b')\in \fK$ such that $x(v)=x'(v)$ on first $t$-many bits, $\fI(v)=\fI'(v)$ and $b(v)=b'(v)$ for every  $v\in [-t,\dots,t)$.
It is easy to see that $\fB_t\subseteq \fB_{t+1}$ and that the minimal $\sigma$-algebra that contains $\bigcup_{t\in \mathbb{N}} \fB_t$ is the Borel $\sigma$-algebra $\fB$ on $\fK$.
Similarly we define $\fB_t^+$ to be the (finite) $\sigma$-algebra generated by right $2t$-neighborhoods, that is $\fB_t$ moved to the right by $t$, and by $\fB^+$ the $\sigma$-algebra generated by $\bigcup_{t\in \mathbb{N}} \fB^+_t$.

\paragraph{$\baire$}

Since $\fK$ is a Polish space, we find a comeager set $C\subseteq \fK$ such that $\lambda$ is continuous when restricted to $C$, see \cite[Theorem~18.6]{KecClassic}.
Let $\tA\in \nabla$ and $\fX$ be a $t$-neighborhood, i.e., an atom in $\fB_t$, such that
\begin{equation}
    \fX\cap C\subseteq \lambda^{-1}(\tA).
\end{equation} 
These exists by continuity of $\lambda$ together with the definition of the topology $\tau$ on $\fK$.
Consider now the shifts of $\fX$ to the right, that is, for every $i>0$ let $\fX_i$ be the shift of $\fX$ by $-(i+t)$.
Then we have $\fX_i\in \fB^+$ for every $i>0$.
Observe that whenever $(x,\fI,b)\in \fX_i\cap C\cap \fS$, for some $i>0$, then $\lambda(x,\fI,b)$ is determined by restriction of $(x,\fI,b)$ on $\{0,1,\dots\}$.
This follows from the assumption that $f$, and therefore $\Lambda$, is equivariant together with the definition of $\fS$.
The following claim finishes the proof, the desired comeager set that satisfies (1.) is $C\cap D\cap \fS$.

\begin{claim}
The set
$$D=\{(x,\fI,b)\in \fK:\exists i>0 \ (x,\fI,b)\in \fX_i\}$$
is comeager in $\fK$.
\end{claim}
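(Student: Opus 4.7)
The plan is to show that $D$ is in fact open and dense, which immediately yields comeager.

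First, each $\fX_i$ is the image of the open $t$-neighborhood $\fX$ under the continuous shift by $-(i+t)$, hence open. Therefore $D = \bigcup_{i>0}\fX_i$ is open. It remains to show $D$ is dense in $\fK$.

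To prove density, I would take an arbitrary nonempty basic open set $U \subseteq \fK$. Such a $U$ is determined by specifying the values $(\fI(v), b(v))$ for $v$ in some finite window $[-r, r]$ together with finitely many initial bits of $x(v)$ on that window, consistent with the block structure of $\fK$. Fix any $(x_0, \fI_0, b_0) \in \fX$; the membership in $\fX$ constrains only $(\fI_0(v), b_0(v), x_0(v)\upharpoonright t)$ for $v \in [-t, t)$. The key point is that $\fK$ is closed under concatenating blocks from $\{B_1, \dots, B_\ell\}$: any finite admissible pattern can be extended on the right by appending further blocks from this list (respecting the marking $b$), and the $(2^\mathbb{N})^\mathbb{Z}$-coordinate is unconstrained. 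Consequently, for $i$ chosen large enough so that $i > r + t$ and so that a concatenation of blocks joining the end of the $U$-pattern at $r+1$ up to position $i - t$ exists (which it does, since we may freely concatenate any of the $B_j$), we can produce a point $(x, \fI, b) \in \fK$ that agrees with $U$ on $[-r, r]$ and with the shifted version of $(x_0, \fI_0, b_0)$ on $[i-t, i+t)$. Such a point lies in $U \cap \fX_i \subseteq U \cap D$, proving density.

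Hence $D$ is open dense, and in particular comeager. The main (minor) obstacle is verifying that concatenation of blocks from $\{B_1, \dots, B_\ell\}$ is always possible regardless of the starting context; this is immediate from the definition of $\fK$ as exactly the shift-invariant set of admissible concatenations, together with the fact that the block marking $b$ uniquely determines where the next block must start and any $B_j$ may be placed there. Once this is in hand, the comeagerness of $C \cap D \cap \fS$ combined with the observation preceding the claim shows that on this comeager set the value $\lambda(x,\fI,b) = \Lambda(x,\fI,b)(0)$ depends only on the restriction of $(x,\fI,b)$ to $\{0, 1, \dots\}$, completing the Baire part of \cref{thm:main_technical}; statement (2.) follows by the symmetric argument with left-shifts.
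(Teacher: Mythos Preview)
Your proposal is correct and follows essentially the same approach as the paper: both show that $D$ is open and dense, with openness coming from the shift being a homeomorphism and density from the fact that any basic neighborhood can be extended on the right by further blocks so as to realize the pattern $\fX$ at a far-away position. Your write-up is somewhat more explicit about the concatenation step than the paper's terse ``merge $\fY$ and $\fX_i$ into some $t''$-neighborhood $\fZ$'', but the underlying argument is the same.
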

\begin{proof}
In fact, we show that $D$ is open dense, that is, $D$ is open and intersect every non-empty open subset of $\fK$.
Since every dense open set is comeager, the claim follows.

It follows from continuity of the shift action together with the definition of the topology $\tau$, i.e., $\fX$ is open, that $D$ is open.
Let $\fY$ be any $t'$-neighborhood.
Taking $i>0$ large enough allows to merge $\fY$ and $\fX_i$ into some $t''$-neighborhood $\fZ$.
In particular, since $\fZ$ is nonemtpy, we have $\fY\cap D\not=\emptyset$.
Consequently, $D$ intersects every open set and we are done.
\end{proof}

\paragraph{$\measure$}
The situation for measure is more complicated, this is because there is no single big ($\nu$-conull) set on which $\lambda$ is continuous.

It will be more convenient to change the perspective and to view $\lambda$ as a function that assigns to each $(x,\fI,b)\in \fK$ a distribution on $\nabla$.
That is if $\lambda(x,\fI,b)=\tA\in\nabla$, then we may view it as the Dirac measure concentrated on the atom $\{\tA\}$.
Define
$$\lambda_t=\E(\lambda|\fB_t)$$
to be the conditional expectation of $\lambda$ given $\fB_t$.
In another words, if $\fX$ is a $t$-neighborhood of $(x,\fI,b)$, then $\lambda_t((x,\fI,b))$ is the average of the values of $\lambda$ on $\fX$.
Similarly we define $\lambda^+_t=\E(\lambda|\fB^+_t)$ and $\lambda^+=\E(\lambda|\fB^+)$.
Our goal is to show that almost surely we have $\lambda^+=\lambda$.

By Doob's Martingale Convergence Theorem we have
\begin{equation}\label{eq:Doob}
\begin{split}
    \lambda(x,\fI,b)= & \ \lim_{t\to\infty} \lambda_t(x,\fI,b) \\
    \lambda^+(x,\fI,b)= & \ \lim_{t\to\infty} \lambda^+_t(x,\fI,b)
\end{split}
\end{equation}
$\nu$-almost surely.

\begin{claim}
The value of $\lambda^+$ is a Dirac measure $\nu$-almost surely.
\end{claim}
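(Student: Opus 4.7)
The plan is to reduce the Dirac claim to a statement about $\lambda$ being $\fB^+$-measurable $\nu$-a.s., and then prove this using Doob's martingale convergence theorem together with the Markov structure of $\mu$ and the shift-equivariance of $f$. Since $\lambda$ takes values in the finite set $\nabla$, the random probability measure $\lambda^+(x,\fI,b)$ is Dirac if and only if for some $\tA \in \nabla$ we have $\lambda^+(\{\tA\}) = 1$, which is equivalent to saying that $\lambda$ coincides $\nu$-almost surely with a $\fB^+$-measurable function.

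First, I would invoke Doob's martingale convergence theorem: the filtration $(\fB^+_t)_{t\in \mathbb{N}}$ is increasing with limit $\fB^+$, so $\lambda^+_t = \E(\lambda \mid \fB^+_t) \to \lambda^+$ holds $\nu$-almost surely as $t\to\infty$. Since $\lambda^+_t$ is (after identifying $\nabla$-valued functions with vectors of indicators in $\mathbb{R}^{|\nabla|}$) the average of $\lambda$ over the atom of $\fB^+_t$ containing $(x,\fI,b)$, showing that these averages concentrate on a single atom in the limit will give the claim.

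Next, I would use the Markov property of $\mathcal{M}$. The chain $\mathcal{M}$ is reversible with uniform stationary distribution $\mu'$, so conditional on the block-state at position $0$, the restrictions $(x,\fI,b)|_{\{\dots,-2,-1\}}$ and $(x,\fI,b)|_{\{0,1,\dots\}}$ are independent: the real labels are i.i.d. across positions by construction, and the block structure decouples via the Markov property. Combined with equivariance, this yields a useful factorization: the equivariance relation $\Lambda(\omega)(N) = \lambda(s^N\omega)$ together with the permutation-block structure on $(\Upsilon,\nabla)$ means that knowing $\lambda$ at one block-start position, plus the block sequence to a later block-start position, determines $\lambda$ at that later block-start via composition of the permutations $\pi_i$.

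The main obstacle, and crux of the argument, is to rigorously exploit the conditional independence above to conclude that $\lambda^+$ is Dirac. The strategy I would pursue is to argue by contradiction: if $\lambda^+$ were non-Dirac on a set of positive $\nu$-measure, then by shift-invariance of $\nu$ and equivariance of $f$, one could pair up distinct left-halves compatible with a common right-half (which is possible by the Markov chain's independence structure) and construct configurations on which $f$ would be forced to take two different values in $\nabla$ on inputs that agree on a comeager/conull set. This would contradict the fact that $f$ is a single-valued Borel equivariant map. The delicate part is handling the measure-zero exceptional set on which $f$ may fail to solve $\Pi$ and carefully tracking how the conditional independence interacts with the equivariant structure of $\Lambda$ and the permutation action on $\nabla$.
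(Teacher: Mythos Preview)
Your proposal has the right ingredients but does not assemble them into a working argument. You correctly identify Doob's theorem, the permutation-block dependence, and that the goal is to show $\lambda$ is $\fB^+$-measurable. However, the contradiction you sketch does not go through: the fact that two different left-halves $L_1,L_2$ paired with a common right-half $R$ give $\lambda(L_1,R)\neq\lambda(L_2,R)$ is precisely the statement that $\lambda$ is not $\fB^+$-measurable---it does not contradict single-valuedness of $f$, nor anything else. You never close the loop to an actual contradiction, and the Markov/conditional-independence structure you invoke, while true, is not what drives the proof.

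The paper's argument is direct, not by contradiction, and pivots on a step you mention but do not exploit. From $\lambda_t\to\lambda$ and the fact that $\lambda$ is always a Dirac measure, one extracts for each $\epsilon>0$ a large $\fB_t$-measurable set $A_\epsilon$ on which $\lambda_t$ is $\epsilon$-close to Dirac. Now shift: the set $(-t)\cdot A_\epsilon$ lies in $\fB_t^+$, and for $y\in(-t)\cdot A_\epsilon$ one has $\lambda(y)=\Lambda(t\cdot y)(-t)$. On each $\fB_t$-atom in $A_\epsilon$ the block structure on $[-t,0]$ is fixed, so by the full dependence on $\fS$ the value $\Lambda(\cdot)(-t)$ is a fixed permutation applied to $\lambda$; hence its conditional average is again $\epsilon$-close to Dirac. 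This says $\lambda_t^+$ is $\epsilon$-close to Dirac on $(-t)\cdot A_\epsilon$, which has measure $>1-\epsilon$ by shift-invariance of $\nu$. Sending $t\to\infty$ and using $\lambda_t^+\to\lambda^+$ finishes. The crucial move you are missing is this shift-by-$t$ that converts a two-sided conditioning into a one-sided one, with the permutation structure carrying the near-Dirac property across.
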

\begin{proof}
First line in \eqref{eq:Doob} together with the definition of $\lambda$ implies that for every $\epsilon>0$ there is $t>0$ and a measurable set $A_\epsilon\in \fB_t$ such that $\nu(A_\epsilon)>1-\epsilon$ and $\lambda_t(x,\fI,b)$ is $\epsilon$-close to a Dirac measure for every $(x,\fI,b)\in A_\epsilon$.

By the equivariance of $\Lambda$, the definition of $\fS$ and the definition of $\fB_t^+$ we have that the same holds for $\lambda^+_t$.
That is $\lambda_t^+(x,\fI,b)$ is $\epsilon$ close to a Dirac measure for every $(x,\fI,b)\in ((-t)\cdot A_\epsilon)$, where $\nu((-t)\cdot A_\epsilon)=\nu(A_\epsilon)>1-\epsilon$ since the shift action is $\nu$-preserving and there is a full dependence between the values of $\Lambda(\fI,x,b)(t)$ and $\Lambda(\fI,x,b)(0)$ on a $\nu$-conull set $\fS$.
This implies the claim because by the second line in \eqref{eq:Doob} we have that $\lambda^+_t\to \lambda^+$ a.s.
\end{proof}

Since we have $\lambda^+=\E(\lambda|\fB^+)$ and $\lambda$ always outputs a Dirac measure, it must be the case that $\lambda=\lambda^+$ holds $\nu$-almost surely.
This finishes the proof of \cref{thm:main_technical}.

\end{document}